\documentclass[10pt,reqno,a4paper]{amsart}
\usepackage{amsthm}
\usepackage{mathtools}
\usepackage{amssymb}
\usepackage{hyperref}
\usepackage{enumitem}
\usepackage{graphicx}
\usepackage{xcolor}
\usepackage{kotex}
\usepackage{comment}

\numberwithin{equation}{section}
\numberwithin{figure}{section}
\allowdisplaybreaks

\theoremstyle{plain}
\newtheorem{thm}{\protect\theoremname}[section]
\newtheorem{lem}[thm]{\protect\lemmaname}
\newtheorem{prop}[thm]{\protect\propositionname}

\theoremstyle{remark}
\newtheorem{rem}[thm]{\protect\remarkname}

\providecommand{\theoremname}{Theorem}
\providecommand{\corollaryname}{Corollary}
\providecommand{\lemmaname}{Lemma}
\providecommand{\remarkname}{Remark}
\providecommand{\propositionname}{Proposition}

\begin{document}
\global\long\def\Im{\mathrm{Im}}%
\global\long\def\Re{\mathrm{Re}}%
\global\long\def\Hc{\mathcal{H}}%
\global\long\def\M{\mathbb{M}}%
\global\long\def\P{\mathbb{P}}%
\global\long\def\L{\mathcal{L}}%
\global\long\def\F{\mathcal{F}}%
\global\long\def\s{\sigma}%
\global\long\def\G{\mathcal{G}}%
\global\long\def\d{\partial}%
\global\long\def\mc#1{\mathcal{#1}}%
\global\long\def\Right{\Rightarrow}%
\global\long\def\Left{\Leftarrow}%
\global\long\def\les{\lesssim}%
\global\long\def\hook{\hookrightarrow}%
\global\long\def\D{\mathbf{D}}%
\global\long\def\rad{\mathrm{rad}}%
\global\long\def\jp#1{\langle#1\rangle}%
\global\long\def\norm#1{\|#1\|}%
\global\long\def\ol#1{\overline{#1}}%
\global\long\def\wt#1{\widehat{#1}}%
\global\long\def\tilde#1{\widetilde{#1}}%
\global\long\def\br#1{(#1)}%
\global\long\def\Bb#1{\Big(#1\Big)}%
\global\long\def\bb#1{\big(#1\big)}%
\global\long\def\lr#1{\left(#1\right)}%
\global\long\def\la{\lambda}%
\global\long\def\al{\alpha}%
\global\long\def\be{\beta}%
\global\long\def\ga{\gamma}%
\global\long\def\La{\Lambda}%
\global\long\def\De{\Delta}%
\global\long\def\na{\nabla}%
\global\long\def\fl{\flat}%
\global\long\def\sh{\sharp}%
\global\long\def\calN{\mathcal{N}}%
\global\long\def\avg{\mathrm{avg}}%
\global\long\def\bbR{\mathbf{\mathbb{R}}}%
\global\long\def\bbC{\mathbf{\mathbb{C}}}%
\global\long\def\bbZ{\mathbf{\mathbb{Z}}}%
\global\long\def\bbN{\mathbf{\mathbb{N}}}%
\global\long\def\bbT{\mathbb{T}}%
\global\long\def\bfD{\mathbf{D}}%
\global\long\def\bfL{\mathbf{L}}%
\global\long\def\calF{\mathcal{F}}%
\global\long\def\calH{\mathcal{H}}%
\global\long\def\calL{\mathcal{L}}%
\global\long\def\calE{\mathcal{E}}%
\global\long\def\calO{\mathcal{O}}%
\global\long\def\calR{\mathcal{R}}%
\global\long\def\calD{\mathcal{D}}%
\global\long\def\calI{\mathcal{I}}%
\global\long\def\calT{\mathcal{T}}%
\global\long\def\Lmb{\Lambda}%
\global\long\def\eps{\varepsilon}%
\global\long\def\lmb{\lambda}%
\global\long\def\gmm{\gamma}%
\global\long\def\rd{\partial}%
\global\long\def\chf{\mathbf{1}}%
\global\long\def\td#1{\widetilde{#1}}%
\global\long\def\sgn{\mathrm{sgn}}%
\global\long\def\blambda{\boldsymbol\lambda}%
\global\long\def\biota{\boldsymbol\iota}%
\global\long\def\Red#1{\textcolor{red}{#1}}%
\global\long\def\NL{\mathrm{NL}}%

\title[No bubble tree]{No bubble trees for the $1$-equivariant harmonic map heat flow and the radial energy-critical nonlinear heat equation in low dimensions}

\subjclass[2020]{35K58, 35B40 (primary), 35K05, 35B33, 58E20}

\author{Taegyu Kim}
\email{k1216300@kias.re.kr}
\address{School of Mathematics, Korea Institute for Advanced Study, 85 Hoegiro Dongdaemun-gu, Seoul 02455, Korea}

\begin{abstract}
	We consider the $1$-equivariant harmonic map heat flow (HMHF) from $\mathbb R^2$ to $\mathbb S^2$ and the radial energy-critical nonlinear heat equation (NLH) in dimensions $d=3,4,5$. We prove that every finite-energy solution of (HMHF) and every $\dot H^1$-bounded solution of (NLH) has at most one bubble: every finite-time blow-up has exactly one bubble, whereas every global solution has either no bubble or one bubble at infinite time. Starting from the soliton resolution, we exclude bubble trees by modulation analysis, deriving a contradiction from the relative dynamics of the two innermost scales. This energy method does not rely on maximum principle and, in particular, applies without restriction on the bubble signs.
\end{abstract}	
\maketitle

\tableofcontents

\section{Introduction}

We consider the energy-critical $D$-equivariant harmonic map heat flow
\begin{equation}\tag{HMHF}\label{eq:HMHF}
	 v_t=v_{rr}+\frac1r v_r-\frac{D^2\sin(2v)}{2r^2}, \qquad (t,r)\in[0,T_+)\times(0,\infty)
\end{equation}
with equivariance index $D\in\mathbb Z_{\geq1}$ and the radial focusing energy-critical nonlinear heat equation
\begin{equation}\tag{NLH}\label{eq:NLH}
	 u_t=u_{rr}+\frac{d-1}{r}u_r+|u|^{\frac4{d-2}}u, \qquad (t,r)\in[0,T_+)\times(0,\infty)
\end{equation}
in dimensions $d\geq3$. 

Our main results, Theorems~\ref{thm:HMHF main} and \ref{thm:NLH main}, rule out bubble trees for finite-energy solutions to \eqref{eq:HMHF} when $D=1$ and for $\dot H^1_d$-bounded radial solutions to \eqref{eq:NLH} when $d\in\{3,4,5\}$, in both the finite-time blow-up and global cases. In particular, these results sharpen the soliton resolution to a \emph{one-bubble resolution}: every finite-time blow-up has exactly one bubble, while every global solution has at most one bubble at infinite time.

\subsection{The equivariant harmonic map heat flow}

The harmonic map heat flow for maps $\Phi:[0,T_+)\times\bbR^2\to\mathbb S^2$ is the $L^2$-gradient flow of the Dirichlet energy
\begin{equation*}
	E(\Phi)\coloneqq\frac12\int_{\bbR^2}|\nabla\Phi|^2dx.
\end{equation*}
In extrinsic form, the equation is
\begin{equation*}
	\partial_t\Phi=\Delta\Phi+|\nabla\Phi|^2\Phi.
\end{equation*}
The harmonic map heat flow for maps between Riemannian manifolds was introduced by Eells and Sampson \cite{EellsSampson1964AJM} to study deformations of maps toward harmonic maps, the critical points of the Dirichlet energy. For $\lambda>0$, the flow is invariant under the scaling $\Phi(t,x)\mapsto\Phi(t/\lambda^2,x/\lambda)$, which also preserves the Dirichlet energy. Thus the two-dimensional problem is energy-critical.

The energy-class Cauchy theory was established by Struwe, originally for maps from a closed surface \cite{Struwe1985}. The associated global weak solution is smooth away from finitely many spacetime points, and suitable rescalings near each singular point converge locally to a nonconstant harmonic map. The energy identity underlying this bubbling mechanism and the associated lack of compactness for Palais--Smale sequences were studied further by Qing \cite{Qing1995}, Ding--Tian \cite{DingTian1995}, Wang \cite{Wang1996}, Qing--Tian \cite{QingTian1997CPAM}, and Lin--Wang \cite{LinWang1998CVPDE}. Along suitable sequences approaching a singular time, the solution decomposes into rescaled harmonic maps and a body map. This sequential bubbling theory was developed further by Topping \cite{Topping1997JDG,Topping2004AnnMath,Topping2004MathZ}.

We restrict our attention to equivariant solutions, for which the flow reduces to a scalar radial equation. Let $D\in\mathbb Z_{\geq1}$. In polar coordinates $(r,\theta)$ on $\bbR^2$, the $D$-equivariant symmetry class consists of maps of the form
\begin{equation*}
	\Phi(t,r,\theta)=\big(\sin v(t,r)\cos(D\theta),\sin v(t,r)\sin(D\theta),\cos v(t,r)\big).
\end{equation*}
This class is preserved by the flow, and the equation for $v$ is \eqref{eq:HMHF}. The Dirichlet energy reduces to
\begin{equation*}
	E(v)\coloneqq\pi\int_0^\infty\left(v_r^2+D^2\frac{\sin^2v}{r^2}\right)r dr.
\end{equation*}
The finite-energy space is the disjoint union of the components
\begin{equation*}
	\calE_{\ell,m}\coloneqq\left\{v:E(v)<\infty,\ \lim_{r\to0}v(r)=\ell\pi,\ \lim_{r\to\infty}v(r)=m\pi\right\}, \qquad \ell,m\in\bbZ.
\end{equation*}
The difference of two maps in the same component belongs to $\calE_{0,0}$. We equip $\calE_{0,0}$ with the norm
\begin{equation*}
	\|h\|_{\calE}^2\coloneqq\int_0^\infty\left(h_r^2+\frac{h^2}{r^2}\right)r dr,
\end{equation*}
which induces a metric on every $\calE_{\ell,m}$. Moreover, for a maximal solution $v$ on $[0,T_+)$ with $T_+\in(0,\infty]$, we have
\begin{equation*}
	E(v(t_2))+2\pi\int_{t_1}^{t_2}\|v_t(t)\|_{L^2(\bbR^2)}^2dt=E(v(t_1)),\qquad 0\leq t_1\leq t_2<T_+.
\end{equation*}
The normalized nonconstant stationary solution of \eqref{eq:HMHF} is the $D$-equivariant harmonic map $Q\in\calE_{0,1}$ given by
\begin{equation*}
	Q(r)=2\arctan(r^D).
\end{equation*}

For finite-energy equivariant solutions, Jendrej and Lawrie strengthened the sequential description above to a continuous-in-time bubble decomposition \cite{JL2023CVPDE}. Without symmetry, continuous-in-time convergence to the family of multi-bubble configurations was established by Jendrej, Lawrie, and Schlag \cite{JLS2025Pi}. Proposition~\ref{prop:sol resol hmhf} records the $1$-equivariant decomposition used below. However, this equivariant decomposition leaves open how many bubbles can occur.

Van der Hout proved, using the maximum principle, that finite-time blowing up bubble trees do not occur for equivariant harmonic map heat flows from the disk into $\mathbb S^2$ \cite{Hout2003JDE}. Here we introduce a new approach that precludes bubble trees both at finite-time blow-up and in the global case. To state our main result, write $Q_{[\lambda]}(r)=Q(r/\lambda)$.

\begin{thm}[One-bubble resolution for $1$-equivariant \eqref{eq:HMHF}]\label{thm:HMHF main}
	Let $D=1$, $\ell,m\in\mathbb Z$, and $v_0\in\calE_{\ell,m}$ be an initial datum; let $v(t)$ be the corresponding solution to \eqref{eq:HMHF} with $T_+\in(0,\infty]$ its maximal time of existence.

	\emph{(Finite-time blow-up case)} If $T_+<\infty$, then there exist a sign $\iota \in\{\pm1\}$, a continuous function $\lambda(t)\in(0,\infty)$, and a function $v^{\ast}\in\calE_{0,m-\ell-\iota}$ such that
	\begin{equation}\label{eq:hmhf decom}
		\left\| v(t)-\ell\pi-\iota Q_{[\lambda(t)]}-v^{\ast}\right\|_{\calE}
		+\frac{\lambda(t)}{\sqrt{T_+-t}}\to0\quad\text{as}\quad t\to T_+.
	\end{equation}

	\emph{(Global case)} If $T_+=\infty$, then either
	\begin{equation*}
		\ell=m\quad \text{and} \quad \left\|v(t)-\ell\pi\right\|_{\calE}\to0\quad\text{as}\quad t\to\infty,
	\end{equation*}
	or there exist a sign $\iota\in\{\pm1\}$ and a continuous function $\lambda(t)\in(0,\infty)$ such that
	\begin{equation}\label{eq:hmhf convergence}
		\ell+\iota=m\quad \text{and} \quad
		\left\|v(t)-\ell\pi-\iota Q_{[\lambda(t)]}\right\|_{\calE}
		+\frac{\lambda(t)}{\sqrt{t}}\to0\quad\text{as}\quad t\to\infty.
	\end{equation}
	
	In particular, if $|\ell-m|\geq2$, then $T_+<\infty$.
\end{thm}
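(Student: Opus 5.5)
We start from the continuous-in-time soliton resolution of Jendrej--Lawrie, recorded as Proposition~\ref{prop:sol resol hmhf}. It provides an integer $N\ge 0$, signs $\iota_j$, scales $\lambda_1(t)\ll\cdots\ll\lambda_N(t)$ and a body map $v^\ast$ (which is $0$ in the global case). With these,
\[
\Big\|v(t)-\ell\pi-\sum_{j=1}^N\iota_jQ_{[\lambda_j(t)]}-v^\ast\Big\|_{\calE}\to0,\qquad \frac{\lambda_N(t)}{\sqrt{T_+-t}}\to0,\qquad \frac{\lambda_j}{\lambda_{j+1}}\to0.
\]
In the global case $\sqrt{T_+-t}$ is replaced by $\sqrt t$. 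The theorem is then equivalent to showing $N\le1$. Indeed, in the blow-up case $N=0$ is impossible, since otherwise the solution would extend past $T_+$ by the small-data/local theory. The topological bookkeeping $m=\ell+\sum\iota_j+\deg v^\ast$ then gives $v^\ast\in\calE_{0,m-\ell-\iota}$ and, in the global case, $\ell+\iota=m$. The final assertion follows because a global solution changes the degree by at most one. So the whole proof reduces to excluding $N\ge2$, and we argue by contradiction, focusing on the two innermost bubbles $\lambda_1\ll\lambda_2$.

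Assuming $N\ge2$, on a time interval near $T_+$ (or near $\infty$) we impose orthogonality conditions to fix modulation parameters $\lambda_1(t),\lambda_2(t)$. We take the remainder $g=v-\ell\pi-\sum\iota_jQ_{[\lambda_j]}-v^\ast$ orthogonal, in a localized sense, to $\Lambda Q_{[\lambda_j]}$, where $\Lambda Q=r\partial_rQ=\sin Q$. Because $\Lambda Q\notin L^2$ when $D=1$ (it decays like $1/r$), the orthogonality has to be truncated at a large radius $R\lambda_j$, which produces logarithmic corrections. Differentiating the orthogonality conditions and projecting the equation gives a modulation law for the innermost scale of the schematic form
\[
\lambda_1\lambda_1'\,\log\!\Big(\frac{\lambda_2}{\lambda_1}\Big)\;\approx\; -c\,\iota_1\iota_2\,\frac{\lambda_1}{\lambda_2}+\text{errors},\qquad c>0 .
\]
Here the right-hand side comes from the interaction of the tail of $Q_{[\lambda_1]}$ with the core of $Q_{[\lambda_2]}$, namely the $\sin(2v)$ nonlinearity evaluated on the cross term. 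A similar but weaker equation holds for $\lambda_2$. The error terms are controlled by $\|g\|_{\calE}$ and by the dissipation $\int\|v_t\|_{L^2}^2<\infty$. Coercivity of the linearized operator around the two-bubble configuration, together with the energy identity, gives an estimate of the form $\int_{t_0}^{T_+}\lambda_1^{-2}\big(\|g\|^2_{\dot H^2\text{-type}}\big)\,dt\lesssim$ a quantity tending to zero, so the errors are integrable in time after the right weighting.

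The contradiction comes from the relative dynamics of the two innermost scales. If $\iota_1\iota_2=+1$, the interaction drives $\lambda_1$ to shrink at a rate that contradicts the energy dissipation bound: integrating, $\lambda_1^2$ would have to decrease by an amount comparable to $\int \lambda_1/\lambda_2$, and this must be matched against the energy loss. If $\iota_1\iota_2=-1$, the inner bubble is repelled and expands. Then $\lambda_1/\lambda_2$ cannot tend to zero, which contradicts the soliton resolution. More robustly, we plan to use a virial-type functional localized at scale $\sqrt{\lambda_1\lambda_2}$, for instance $\int \partial_tv\,\Lambda v\,\chi_{\sqrt{\lambda_1\lambda_2}}$. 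Its time derivative has a definite sign up to integrable errors, equal to a multiple of $\lambda_1/\lambda_2$ times a log. Combined with the bound $\frac{d}{dt}(\lambda_1/\lambda_2)\lesssim$ integrable quantities, this shows $\lambda_1/\lambda_2$ is bounded below on a sequence, which is the contradiction. This argument is independent of signs and avoids the maximum principle.

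The main obstacle will be the $D=1$ logarithmic non-integrability of $\Lambda Q$. We must choose the truncation radii and the interaction computation so that the leading interaction term genuinely dominates the remainder errors. In particular, we must show that the $g$-dependent terms in the modulation equation are $o(\lambda_1/\lambda_2)$ in time-averaged form. For this we would first try a localized coercivity estimate on the region $r\lesssim\sqrt{\lambda_1\lambda_2}$ combined with the dissipation integral, accepting $\log$ losses absorbed by the $\log(\lambda_2/\lambda_1)$ factor.
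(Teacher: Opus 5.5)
Your reduction to excluding $N\ge2$, your focus on the two innermost bubbles, and your formal law $\lambda_1'\log(\lambda_2/\lambda_1)\approx-c\,\iota_1\iota_2/\lambda_2$ all agree with the paper (your $\lambda_1,\lambda_2$ are its $\lambda_N,\lambda_{N-1}$). For $D=1$ one has $\calF_N\approx4\iota_N\iota_{N-1}\lambda_N\log(\lambda_{N-1}/\lambda_N)$ and $\partial_t(\calF_N-\mathfrak b_N)\approx\kappa\lambda_{N-1}^{-1}$ with $\kappa=-8$, which is your law with $c=2$. The genuine gap is the step that is supposed to produce the contradiction.

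\begin{itemize}
\item For $\iota_1\iota_2=+1$, the energy identity only gives finiteness of roughly $\int\lambda_1'^2\log(\lambda_2/\lambda_1)\,dt\approx\int dt/(\lambda_2^2\log(\lambda_2/\lambda_1))$. This is compatible with everything you know as soon as the logarithm grows, so ``matching against the energy loss'' yields nothing.
\item For $\iota_1\iota_2=-1$, expansion of the inner scale does not by itself prevent $\lambda_1/\lambda_2\to0$, since the outer scale moves too.
\item The fallback virial argument fails as stated. The quantity $\int\partial_tv\,\Lambda v\,\chi_{\sqrt{\lambda_1\lambda_2}}\,r\,dr$ is scale-invariant, so its time derivative scales like $\lambda^{-2}$ and cannot be a fixed multiple of the dimensionless $(\lambda_1/\lambda_2)\log(\lambda_2/\lambda_1)$. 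Moreover, a definite-sign derivative of a bounded functional only gives time-integrability. Likewise, a one-sided bound on $\frac{d}{dt}(\lambda_1/\lambda_2)$ by integrable quantities is consistent with $\lambda_1/\lambda_2\to0$.
\end{itemize}

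None of these arguments uses $D<2$ quantitatively. That is a warning sign, since bubble trees do occur in the high-dimensional range recalled in the introduction.

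The missing idea is to compare the \emph{size} of the functional with the time integral of its derivative, using the sub-self-similar bound on the second scale.
\begin{itemize}
\item The functional is small: $|\lambda_1\log(\lambda_2/\lambda_1)|\le\lambda_2/e$. In the paper this is $|\calF_N|+|\mathfrak b_N|\lesssim\lambda_{N-1}^{2-D}$, including the correction $\mathfrak b_N=(\Theta_N,g)_d$.
\item Its derivative is $\kappa\lambda_2^{-1}$ with $\kappa\neq0$, up to errors that are either $(\eta+o(1))\lambda_2^{-1}$ or negligible after time integration thanks to $\int_0^{T_+}\|\calT(u)\|_{L^2_d}^2dt<\infty$.
\item Soliton resolution gives $\lambda_2\ll\sqrt{T_+-t}$ (resp. $\sqrt t$); this is the input you never use. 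Hence $\int_t^{T_+}\lambda_2^{-1}ds\gg\sqrt{T_+-t}\gg\lambda_2(t)$ in the blow-up case, and $\int_{T_0}^t\lambda_2^{-1}ds\gg\sqrt t\gg\lambda_2(t)$ globally, contradicting the size bound.
\end{itemize}
This is exactly where $D<2$ enters, via $\int_t^{T_+}(T_+-s)^{-D/2}ds\sim(T_+-t)^{(2-D)/2}$. Since only $|\kappa|\neq0$ matters, no case split on $\iota_1\iota_2$ is needed, and your own law integrated this way would already close the argument.

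Two secondary points. First, the paper avoids your truncation of $\Lambda Q$ by testing against the difference $\Theta_N$ of the two normalized scaling directions; their common $r^{-2D}$ tail cancels, so $r\Theta_N\in L^2_d$. Second, it controls $g$ pointwise in time, via local coercivity around the innermost bubble followed by interpolation with the small $\dot H^1$ norm. It does not use a time-integrated $\dot H^2$ bound, since the forcing $\calT(U)$ is only bounded by $\lambda_{N-1}^{-1}$ in $L^2_d$, which is not square-integrable in time.
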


Thus, when $D=1$, the decomposition in Proposition~\ref{prop:sol resol hmhf} contains exactly one bubble when $T_+<\infty$ and at most one bubble when $T_+=\infty$. In particular, bubble trees do not occur.

Both one-bubble scenarios in the theorem are known to occur. Finite-time blow-up in the $1$-equivariant setting was first constructed by Chang, Ding, and Ye \cite{CDY1992JDG}. Rapha\"el and Schweyer later constructed finite-time one-bubble blow-up regimes and gave sharp descriptions of their dynamics \cite{RaphaelSchweyer2013CPAMHeat,RaphaelSchweyer2014AnalPDEHeatQuantized}. Global one-bubble regimes were constructed by Wei, Zhang, and Zhou \cite{WZZ2026JFA}. Our theorem does not, however, determine the asymptotic law of the remaining scale.

\subsection{The radial energy-critical nonlinear heat equation}
Our analysis also applies to the energy-critical nonlinear heat equation.
For $d\geq3$, we consider the energy-critical nonlinear heat equation on $\bbR^d$,
\begin{equation*}
	\partial_tu=\Delta_{\bbR^d} u+|u|^{\frac4{d-2}}u.
\end{equation*}
This equation is the $L^2$-gradient flow associated with the energy
\begin{equation*}
	E(u)\coloneqq\int_{\bbR^d}\left(\frac12|\nabla u|^2-\frac{d-2}{2d}|u|^{\frac{2d}{d-2}}\right)dx.
\end{equation*}
For $\lambda>0$, the scaling $u(t,x)\mapsto\lambda^{-(d-2)/2}u(\lambda^{-2}t,\lambda^{-1}x)$ preserves both the equation and the energy. Thus $\dot H^1(\bbR^d)$ is the critical energy space. The Cauchy problem is well-posed in this space \cite{Weissler1980,BrezisCazenave1996}. Moreover, if $u$ is a maximal solution on $[0,T_+)$, then for $0\leq t_1\leq t_2<T_+$,
\begin{equation*}
	E(u(t_2))+\int_{t_1}^{t_2}\|u_t(t)\|_{L^2(\bbR^d)}^2dt=E(u(t_1)).
\end{equation*}
Up to scaling and sign, the unique nonzero radial finite-energy stationary solution is the Aubin--Talenti bubble
\begin{equation*}
	W(r)=\left(1+\frac{r^2}{d(d-2)}\right)^{-(d-2)/2},
\end{equation*}
which satisfies $-\Delta_{\bbR^d} W=W^{(d+2)/(d-2)}$.

We now restrict our attention to $\dot H^1(\bbR^d)$-bounded radial solutions. Writing $u(t,x)=u(t,r)$ with $r=|x|$ reduces the equation to \eqref{eq:NLH}. For these solutions, Aryan proved continuous-in-time soliton resolution in every dimension $d\geq3$ \cite{Aryan2024solResolHeat}. Proposition~\ref{prop:sol resol nlh} records the precise statement used below. As in the \eqref{eq:HMHF} setting, the soliton resolution does not determine how many bubbles can occur.

Our second main result rules out bubble trees in low dimensions $d\in\{3,4,5\}$, both at finite time and globally. For $\lambda>0$, write $W_\lambda(r)=\lambda^{-(d-2)/2}W(r/\lambda)$.

\begin{thm}[One-bubble resolution for radial \eqref{eq:NLH} in low dimensions]\label{thm:NLH main}
	Let $d\in\{3,4,5\}$, and let $u\in C([0,T_+);\dot H^1(\bbR^d))$ be a radial solution of \eqref{eq:NLH} with $T_+\in(0,\infty]$ its maximal time of existence. Assume 
	\begin{equation*}
		\sup_{0\leq t<T_+}\|u(t)\|_{\dot H^1(\bbR^d)}<\infty.
	\end{equation*}

	\emph{(Finite-time blow-up case)} If $T_+<\infty$, then there exist a sign $\iota\in\{\pm1\}$, a continuous function $\lambda(t)\in(0,\infty)$, and a function $u^{\ast}\in\dot H^1(\bbR^d)$ such that
	\begin{equation}\label{eq:NLH decom}
		\left\|u(t)-\iota W_{\lambda(t)}-u^{\ast}\right\|_{\dot H^1(\bbR^d)}
		+\frac{\lambda(t)}{\sqrt{T_+-t}}\to0\quad\text{as}\quad t\to T_+.
	\end{equation}

	\emph{(Global case)} If $T_+=\infty$, then either
	\begin{equation*}
		\|u(t)\|_{\dot H^1(\bbR^d)}\to0\quad\text{as}\quad t\to\infty,
	\end{equation*}
	or there exist a sign $\iota\in\{\pm1\}$ and a continuous function $\lambda(t)\in(0,\infty)$ such that
	\begin{equation}\label{eq:NLH convergence}
		\left\|u(t)-\iota W_{\lambda(t)}\right\|_{\dot H^1(\bbR^d)}
		+\frac{\lambda(t)}{\sqrt{t}}\to0\quad\text{as}\quad t\to\infty.
	\end{equation}
\end{thm}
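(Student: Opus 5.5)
We start from the continuous-in-time soliton resolution of Proposition~\ref{prop:sol resol nlh} (Aryan). It gives an integer $N\geq0$, signs $\iota_1,\dots,\iota_N$, continuous scales $\lambda_1(t)\ll\lambda_2(t)\ll\dots\ll\lambda_N(t)$ (with $\lambda_N(t)\ll\sqrt{T_+-t}$, resp. $\ll\sqrt t$) and a body map $u^\ast$ (equal to $0$ in the global case) such that $u(t)-\sum_j\iota_jW_{\lambda_j(t)}-u^\ast\to0$ in $\dot H^1$. In the finite-time case $N\geq1$, since otherwise $u$ would converge in $\dot H^1$ and could be continued past $T_+$. So it suffices to rule out $N\geq2$, and we argue by contradiction. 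We will only look at the two innermost bubbles $W_{\lambda_1}$, $W_{\lambda_2}$. The outer bubbles and the body are at scales much larger than $\lambda_2$, so in the region $r\lesssim\lambda_2$ they act as a slowly varying, nearly constant background. A suitable truncation localizes them away, up to errors that are $o(1)$ relative to the interaction.

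\textbf{Modulation.} On a late time interval we impose orthogonality conditions $\langle g,\mathcal Z_{\lambda_j}\rangle=0$ for $j=1,2$ on the remainder $g=u-\iota_1W_{\lambda_1}-\iota_2W_{\lambda_2}-(\text{outer part})$, where $\mathcal Z$ is a localized approximation of $\Lambda W=\partial_\lambda W_\lambda|_{\lambda=1}$ (localized in $d=3,4$, since $\Lambda W\notin L^2$ there). The coercivity of the linearized operator $-\Delta-f'(W)$ on the complement of $\Lambda W$ in the radial class, together with the dissipation identity $\int_{t_1}^{T_+}\|u_t\|_{L^2}^2\,dt<\infty$, then gives the bound $\|g\|_{\dot H^1}\lesssim (\lambda_1/\lambda_2)^{(d-2)/2}$ plus $o(1)$ terms, in an averaged-in-time sense. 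Projecting the equation onto $\Lambda W_{\lambda_1}$ yields the leading-order law
\begin{equation*}
\lambda_1\lambda_1' \approx -c_d\,\iota_1\iota_2\Big(\frac{\lambda_1}{\lambda_2}\Big)^{\frac{d-2}{2}}\cdot\kappa_d(\lambda_1),
\end{equation*}
where $c_d\neq0$ is an explicit interaction constant and $\kappa_d$ is a dimension-dependent normalization: $\|\Lambda W\|_{L^2}^2$ for $d=5$, and a logarithmically or linearly growing cutoff factor for $d=4$ or $d=3$. The projection onto $\Lambda W_{\lambda_2}$ shows that $\lambda_2$ varies much more slowly.

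\textbf{Contradiction.} Set $\mu=\lambda_1/\lambda_2\to0$. The interaction term has a fixed sign $\iota_1\iota_2$ on the time interval considered, so $\mu'/\mu$ has a definite sign and size comparable to $\lambda_2^{-2}\mu^{(d-2)/2}$ up to the log/cutoff factors.
\begin{itemize}[leftmargin=*]
\item If $\iota_1\iota_2=+1$ (attraction toward concentration), the ODE forces $\mu$ to reach $0$ in a time $\sim\lambda_2^2\mu^{-(d-2)/2}\cdot(\ldots)$. We then compare this time with the available time, which is $T_+-t\gg\lambda_2^2$, and with the $\dot H^1$-boundedness of $u$.
\item If $\iota_1\iota_2=-1$ (repulsion), $\mu$ must increase, which contradicts $\mu\to0$.
\end{itemize}
In fact we expect to handle both signs uniformly. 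Integrate a localized virial or energy-type functional $\mathcal V(t)\approx\log\mu$, or a quantity like $\lambda_1^2/\lambda_2^2$, against the dissipation. The key inequality is that $\int\|u_t\|_{L^2}^2\,dt$ over a dyadic interval of $\mu$ is bounded below by $\int|\lambda_1'|^2\lambda_1^{-2}\|\Lambda W\|^2$. Combined with the modulation law, this lower bound is not integrable as $\mu\to0$ when $d\leq5$, whereas the total dissipation is finite. This is where $d\in\{3,4,5\}$ should enter: for $d\geq6$, $f'(W)$ is no longer Lipschitz-smooth enough, and the interaction exponent $(d-2)/2$ beats the available dissipation.

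\textbf{Main obstacle.} The hard step is controlling the remainder $g$ sharply enough that its contribution to the $\lambda_1$ law is genuinely smaller than the interaction term $\mu^{(d-2)/2}$. In $d=3,4$ this is complicated by the non-$L^2$ tail of $\Lambda W$, which forces the cutoff at an intermediate radius between $\lambda_1$ and $\lambda_2$. We would first try to absorb $g$ by a Lyapunov/energy functional adapted to the two-bubble configuration: expand $E(u)$ around $W_{\lambda_1}+W_{\lambda_2}$ and use the monotone decrease of energy to gain time-averaged smallness of $\|g\|$. We would then pick good times where $\|u_t\|_{L^2}$ is small via the finiteness of the total dissipation.
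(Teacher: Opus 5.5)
Your proposal has two genuine gaps. The contradiction mechanism you propose does not produce a contradiction. The control of $g$, which you correctly flag as the main obstacle, is missing the idea that makes it work. Below, your $\lambda_1,\lambda_2$ (innermost, second-innermost) correspond to the paper's $\lambda_N,\lambda_{N-1}$.

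\textbf{Contradiction step.} The ``uniform'' dissipation argument fails. Take your law $\lambda_1\lambda_1'\approx c\,\mu^{D}$ with $D=\frac{d-2}{2}$; it gives $\mu'/\mu\sim\lambda_2^{-2}\mu^{D-2}$, not $\lambda_2^{-2}\mu^{D}$. A dyadic range of $\mu$ is then crossed in time $\sim\lambda_2^2\mu^{2-D}$ at dissipation rate $\sim\mu^{2D}\lambda_1^{-2}$. Each dyadic block therefore costs only $\sim\mu^{D}$ of dissipation, which is summable for every $d\geq3$ (the log/cutoff normalizations in $d=3,4$ give the same). This is just the release of a finite interaction energy, fully compatible with $\int_0^{T_+}\|u_t\|_{L^2}^2dt<\infty$.

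Your sign-split argument rests on ``$\lambda_2$ varies much more slowly'', which you cannot prove. The scale $\lambda_2$ is driven by the outer bubbles, $u^*$ and the radiation, and only $|\lambda_{j,t}|\lesssim\|\calT(u)\|_{L^2_d}$ is available.

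The paper's argument ignores signs and uses nothing about $\lambda_{N-1}$ except sub-self-similarity. It builds $X=\calF_N-\mathfrak b_N$ with $|X|\lesssim\lambda_{N-1}^{2-D}$ and $\partial_tX=\kappa\lambda_{N-1}^{-D}+\text{error}$. The errors are either $\ll\lambda_{N-1}^{-D}$ or negligible after time integration: use Young for $\lambda_{N-1}^{1-D}\|\calT(u)\|$, and H\"older in the form $\int_t^s\|\calT(u)\|^D\le(s-t)^{(2-D)/2}(\int_t^s\|\calT(u)\|^2)^{D/2}$. Since $\lambda_{N-1}\ll\sqrt{T_+-t}$ and $0<D<2$,
\begin{equation*}
\int_t^{T_+}\lambda_{N-1}^{-D}\,ds\gg(T_+-t)^{\frac{2-D}{2}}\gg\lambda_{N-1}(t)^{2-D}\gtrsim|X(t)|.
\end{equation*}
This is incompatible with the integrated law $|X(t)|\approx|\kappa|\int_t^{T_+}\lambda_{N-1}^{-D}ds$; the global case is analogous, integrating from a fixed $T_0$. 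This is where $d\le5$ enters, not through any regularity of $f'$. Your remark comparing with $T_+-t\gg\lambda_2^2$ points in this direction but is not developed.

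\textbf{Control of $g$.} For $d=3,4,5$ one has $r\Lambda W\notin L^2_d$, also in $d=5$ although $\Lambda W\in L^2$ there. So $(\Lambda W_{\lambda_1},g)$ is not bounded by $\|g\|_{\dot H^1}$, and $(\Lambda W_{\lambda_1},g_t)$ cannot be written as the derivative of a controlled quantity.

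A cutoff at $R$ with $\lambda_1\ll R\ll\lambda_2$ does not fix this. It creates a commutator whose pairing with $g$ is only bounded by $\lambda_1^DR^{-D}\|g\|_{\dot H^1}=(\lambda_2/R)^D\|g\|_{\dot H^1}\,\mu^D$. That is not small relative to the interaction, since $\|g\|\to0$ at no rate. For the same reason, a bound ``$\|g\|_{\dot H^1}\lesssim\mu^D+o(1)$'' carries no information.

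The quadratic term is only bounded by $\lambda_1^D\int g^2r^{D-1}dr$, so you need $\int g^2r^{D-1}dr\ll\lambda_2^{-D}$. That quantity is not scale invariant and is not controlled by $\dot H^1$ smallness. Selecting good times does not help, because the law must be integrated over whole intervals.

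The missing ingredients in the paper are the following.
\begin{itemize}
\item The test function $\Theta_N=\iota_{N-1}(\lambda_N^{2-D}(\Lambda W)_{\underline{\lambda_N}}-\lambda_{N-1}^{2-D}(\Lambda W)_{\underline{\lambda_{N-1}}})$. The common tail $c\,r^{-2D}$ of the two normalized scaling directions cancels, so $\|r\Theta_N\|_{L^2_d}\lesssim\lambda_{N-1}^{2-D}$.
\item The correction $\mathfrak b_N=(\Theta_N,g)_d$, which absorbs the $g_t$ term.
\item The bound $\|rH_U\Theta_N\|_{L^2_d}\lesssim\lambda_{N-1}^{-D}$, which follows from $H\Lambda W=0$ and makes the linear term $O(\delta^{1/2}\lambda_{N-1}^{-D})$.
\item The weighted estimate $\int_0^{r_0}g^2r^{D-1}dr\lesssim\delta^{(2-D)/2}\|\calT(u)\|_{L^2_d}^D+\delta^{1/4}\lambda_{N-1}^{-D}$. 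It comes from local $\dot H^2$ coercivity of $H_{\lambda_N}$ plus interpolation, with a modified profile in $d=3$, and controls the quadratic term in a form that is negligible after time integration.
\end{itemize}
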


Consequently, for $d\in\{3,4,5\}$, Proposition~\ref{prop:sol resol nlh} reduces to one bubble at finite time and to the zero- or one-bubble alternative globally. The asymptotic law of the bubble scale remains open.

One-bubble solutions have been constructed in both the finite-time blow-up and global cases for $d\in\{3,4,5\}$. For finite-time blow-up cases, see \cite{Schweyer2012JFA,PMW2019,delPinoMussoWeiZhang2020HeatQuantized,Harada2020AIHPC}; for the global solutions, see \cite{delPinoMussoWei2020Infheat,WZZ2024JDE,LWZZ2024}.

For the discussion and the common analysis below, we follow the unified notation in Section~\ref{sec:notation} and set 
\begin{equation*}
	d=2D+2,
\end{equation*}
using $D$ as the common parameter for the two equations. This convention is motivated by the identity
\begin{equation*}
	\left(\partial_{rr}+\frac1r\partial_r-\frac{D^2}{r^2}\right)(r^Du)
	=r^D\left(\partial_{rr}+\frac{d-1}{r}\partial_r\right)u.
\end{equation*}

\subsection{Discussions on the results}

\begin{rem}[Method and novelties]
	Our proof is based on the energy method with modulation analysis, following the one-bubble argument of \cite{CollotMerleRaphael2017CMP} and its multi-bubble refinement in \cite{KimMerle2025CPAM,KimMerle2026arXiv} in high dimensions. It does not use the maximum principle and applies without any restriction on the signs of bubbles. The key point in the present work is to capture the dynamics of the two innermost scales, $\lambda_N$ relative to $\lambda_{N-1}$, rather than the evolution of $\lambda_N$ alone. For \eqref{eq:HMHF}, our argument not only recovers, by a different method, van der Hout's exclusion of finite-time blowing up bubble trees for equivariant flows from the disk \cite{Hout2003JDE}, but also extends this exclusion to the global case.
\end{rem}

\begin{rem}[Equivariance and spatial dimension]
	Under the correspondence $d=2D+2$, $D=2$ is expected to mark the threshold at which the global dynamics change. For $D>2$, K.~Kim and Merle prove that finite-time blow-up does not occur and completely classify the global dynamics for finite-energy solutions to \eqref{eq:HMHF} and $\dot H^1(\bbR^d)$-bounded radial solutions to \eqref{eq:NLH}, including the number and signs of the bubbles and the asymptotic laws of their scales \cite{KimMerle2025CPAM}. A related result for \eqref{eq:NLH} without symmetry appears in \cite{KimMerle2026arXiv}.

    Heuristically, the distinction between the two regimes reflects the slow decay of the bubble tails in low dimensions: when $0<D<2$, the interaction between neighboring bubbles remains too strong under scale separation for a bubble tree to persist.
    
	The borderline case $D=2$ is not covered here; from the viewpoint of \cite{KimMerle2025CPAM}, we expect all finite-energy \eqref{eq:HMHF} solutions and all $\dot H^1$-bounded radial \eqref{eq:NLH} solutions to be global and both equations to admit infinite-time bubble trees with an arbitrary number of bubbles. The $\dot H^1$-boundedness assumption for \eqref{eq:NLH} is essential: at $D=2$, or equivalently $d=6$, Harada constructed a radial type II finite-time blow-up solution that is not $\dot H^1(\bbR^6)$-bounded \cite{Harada2020annPDE}.
\end{rem}

\begin{rem}[On bubble trees]
	Bubble trees (or bubble towers) have been constructed in several parabolic settings. Infinite-time or ancient trees are obtained for \eqref{eq:HMHF} with a suitable target, the Yamabe flow, and the high-dimensional energy-critical heat equation in \cite{Topping2000,DPS2018crelle,delPinoMussoWei2021AnalPDE,SunWeiZhang2021arXiv}. For the classification of high-dimensional energy-critical heat flows, see \cite{KimMerle2025CPAM,KimMerle2026arXiv}. In the disk problem for \eqref{eq:HMHF}, van der Hout excludes finite-time bubble trees when $D=1$ \cite{Hout2003JDE}. This conclusion was recently extended to every $D\geq1$ in \cite{Samuelian2026CVPDE}; see also \cite{Samuelian2026arXiv}.
	
	In the dispersive setting, concentric two-bubble solutions are constructed in \cite{Jendrej2019AJM,Jendrej2017AnalPDE,JendrejLiXu2026arXiv,JendrejLiXu2026arXivNLS}. For wave maps, the infinite-time two-bubble dynamics at threshold energy is classified in \cite{JendrejLawrie2018Invent}, while finite-time trees with two and then arbitrarily many bubbles are constructed in \cite{JendrejKrieger2025arXiv,KriegerPalacios2026arXiv}, and infinite-time trees with arbitrarily many bubbles in \cite{HwangKim2026arXiv}. By contrast, at most one soliton occurs for the equivariant self-dual Chern--Simons--Schr\"odinger equation \cite{KimKwonOh2025AJM}, and bubble trees are excluded for the Calogero--Moser derivative nonlinear Schr\"odinger equation in \cite{KimTKwon2024arxivSolResol}. See also \cite{Shen2026arXiv} for the radial three-dimensional energy-critical nonlinear wave equation.
\end{rem}

\begin{rem}[One-bubble classification]
	In the range $0<D<2$ considered here, our theorems show that every solution contains at most one bubble. The remaining problem is to classify its possible scale laws and their dependence on the radiation. For critical \eqref{eq:NLH}, the formal analysis of Filippas, Herrero, and Vel\'azquez predicts discrete finite-time rates \cite{FilippasHerreroVelazquez2000}. Both finite- and infinite-time one-bubble regimes are known to occur for \eqref{eq:HMHF} and \eqref{eq:NLH}, but the existing constructions do not provide an exhaustive classification.
	
	For $D\geq2$, near-one-bubble dynamics is classified for \eqref{eq:HMHF} at $D=2$ in \cite{GustafsonNakanishiTsai2010CMP}. For \eqref{eq:NLH}, the corresponding dynamics is classified when $D>2$ in \cite{CollotMerleRaphael2017CMP} and at $D=2$ in \cite{Harada2026CVPDE}. Related classifications in critical parabolic models appear, for example, in \cite{Mizoguchi2022CPAM}. For analogous results in critical dispersive models, see \cite{Raphael2005MathAnnalen, MerleRaphael2006JAMS,MartelMerleRaphael2014Acta,Kim2025JEMS,JeongKimKimKwon2026arXiv}.
\end{rem}

\begin{rem}[Related results for semilinear heat equations]
	The finite-time dynamics of semilinear heat equations depend strongly on the exponent, the sign of the solution, and the imposed symmetry. Here type I blow-up refers to the ODE rate, while type II blow-up is faster. In the subcritical range, every finite-time blow-up is of type I by Giga--Kohn \cite{GigaKohn1985CPAM} and Giga--Matsui--Sasayama \cite{GMS2004}. When $p>p_{JL}$, where $p_{JL}$ denotes the Joseph--Lundgren exponent, Herrero and Vel\'azquez constructed radial type II solutions with discrete polynomial rates \cite{HerreroVelazquez1992,HerreroVelazquez1994CRASPSI}, and Mizoguchi classified the rates of radial nonnegative type II solutions under additional assumptions \cite{Mizoguchi2007Math.Ann.,Mizoguchi2011TranAMS}. Between the Sobolev critical and Joseph--Lundgren exponents, Matano and Merle excluded radial type II blow-up \cite{MatanoMerle2004CPAM}. At the energy-critical exponent in dimensions $d\geq 7$, Wang and Wei obtained the corresponding exclusion for nonnegative data without radial symmetry \cite{WangWei2021arXiv}.
\end{rem}

\subsection{Statements of soliton resolution}

We record the precise bubble decomposition for finite-energy $1$-equivariant \eqref{eq:HMHF} and soliton resolution for $\dot H^1(\bbR^d)$-bounded radial \eqref{eq:NLH}, $d\geq3$, used below.

\begin{prop}[Bubble decomposition for $1$-equivariant \eqref{eq:HMHF}, \cite{JL2023CVPDE}]\label{prop:sol resol hmhf}
	Let $\ell,m\in\mathbb Z$ and $v_0\in\calE_{\ell,m}$ be an initial datum; let $v(t)$ be the corresponding solution to \eqref{eq:HMHF} with $T_+\in(0,\infty]$ its maximal time of existence.
	
	\emph{(Finite-time blow-up case)} If $T_+<\infty$, then there exist an integer $N\geq1$, signs $\iota_1,\dots,\iota_N\in\{\pm1\}$, continuous functions $\wt\lambda_1(t),\dots,\wt\lambda_N(t)\in(0,\infty)$, and a function $v^{\ast}\in\calE_{0,m^{\ast}}$ with $m^{\ast}=m-\ell-\sum_{j=1}^N\iota_j$ such that
	\begin{equation*}
		\Big\| v(t)-\ell\pi-\sum_{j=1}^N\iota_jQ_{[\wt\lambda_j(t)]}-v^{\ast}\Big\|_{\calE}
		+\sum_{j=2}^N\frac{\wt\lambda_j(t)}{\wt\lambda_{j-1}(t)}
		+\frac{\wt\lambda_1(t)}{\sqrt{T_+-t}}\to0\quad\text{as}\quad t\to T_+.
	\end{equation*}
	
	\emph{(Global case)} If $T_+=\infty$, then there exist an integer $N\geq0$, signs $\iota_1,\dots,\iota_N\in\{\pm1\}$, and continuous functions $\wt\lambda_1(t),\dots,\wt\lambda_N(t)\in(0,\infty)$ such that
	\begin{equation*}
		\Big\| v(t)-\ell\pi-\sum_{j=1}^N\iota_jQ_{[\wt\lambda_j(t)]}\Big\|_{\calE}
		+\sum_{j=2}^N\frac{\wt\lambda_j(t)}{\wt\lambda_{j-1}(t)}
		+\frac{\wt\lambda_1(t)}{\sqrt{t}}\to0\quad\text{as}\quad t\to T_+.
	\end{equation*}
\end{prop}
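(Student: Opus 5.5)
This proposition is the continuous-in-time bubble decomposition of Jendrej and Lawrie \cite{JL2023CVPDE}, specialized to $D=1$. In the paper it is quoted, not reproved. The plan is therefore to check that the $1$-equivariant case sits inside their framework and to explain how the stated normalization follows from their theorem. I would proceed in three steps.

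\textbf{Step 1: sequential decomposition.} Energy monotonicity gives $\int_0^{T_+}\|v_t\|_{L^2}^2\,dt\le E(v_0)/(2\pi)<\infty$. So there are times $t_n\to T_+$ along which the tension $\|v_t(t_n)\|_{L^2}$ is small on the relevant parabolic scale. In the finite-time case this means $\sqrt{T_+-t_n}\,\|v_t(t_n)\|_{L^2}\to0$; in the global case it means $\sqrt{t_n}\,\|v_t(t_n)\|_{L^2}\to0$.

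The sequential compactness theory for equivariant maps with vanishing tension (Struwe, Qing, Ding--Tian, Topping) then produces two things along these times:
\begin{itemize}
\item finitely many rescaled harmonic maps $\pm Q_{[\lambda_j(t_n)]}$ with separated scales;
\item a body map $v^\ast$, which in the finite-time case is the $\calE$-limit of the solution outside the self-similar region.
\end{itemize}
The bubbles are confined inside the parabolic scale. Equivariance is what forces each bubble to be a signed copy of $Q$ centered at the origin. It also rules out energy loss in the necks, using the radial Hardy-type control of $\sin^2 v/r^2$. The degree bookkeeping $m^\ast=m-\ell-\sum\iota_j$ is read off from the boundary values of each component $\calE_{\ell,m}$.

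\textbf{Step 2: upgrade to continuous time.} This is the core of \cite{JL2023CVPDE}. Let $\mathbf d(t)$ be the distance from $v(t)$ to the set of $N$-bubble configurations with separated scales, adjusted by the body map.

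Suppose the sequential statement held but $\limsup_{t\to T_+}\mathbf d(t)>0$. Then there would be intervals $[a_n,b_n]$ on which $\mathbf d$ goes from small to a fixed $\eta>0$.

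\begin{itemize}
\item On such intervals, either the tension stays integrably small, contradicting the energy accounting for leaving a neighborhood of the bubble configurations;
\item or a collision/ejection of bubbles occurs.
\end{itemize}

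The collision case is excluded by showing that the number of bubbles and their signs are constant along all sequences of vanishing tension. This uses the energy identity: the energy is monotone with a well-defined limit, and it equals $E(v^\ast)+N\cdot 4\pi$ for every admissible sequence. Combined with a localized energy inequality, this prevents the configuration from changing between sequences.

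The continuity of the $\wt\lambda_j$ comes from choosing them by modulation (an orthogonality condition) whenever $\mathbf d$ is small. The constraint $\wt\lambda_1\ll\sqrt{T_+-t}$, resp. $\ll\sqrt t$, comes from localized energy estimates at the parabolic scale.

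\textbf{Main obstacle.} I expect the hard part to be the no-return/collision-exclusion argument of Step 2. Everything else is standard compactness. I would follow \cite{JL2023CVPDE}: define the collision intervals, modulate on them, and use the localized energy flux to show that the minimal distance cannot oscillate.
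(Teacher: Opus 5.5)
Your proposal matches the paper, which gives no proof of this proposition and simply imports it from Jendrej--Lawrie \cite{JL2023CVPDE}. All that is needed is that the $1$-equivariant case is covered there and that the conventions are matched: the ordering of the scales, the shift by $\ell\pi$, and $v^{\ast}\in\calE_{0,m^{\ast}}$ with $m^{\ast}=m-\ell-\sum_j\iota_j$.

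One caution about your own sketch of Step 2. Along vanishing-tension sequences, energy quantization ($E(Q)=4\pi$) fixes $N$, and the degree fixes $\sum_j\iota_j$. Neither fixes the ordered sign pattern. More importantly, agreement along all such sequences cannot by itself exclude excursions away from the multi-bubble set on time intervals of length comparable to the square of the inner bubble scales. Such intervals are far shorter than $T_+-t$ (or $t$), carry negligible dissipation, and are invisible to the averaging in Step 1. For the same reason, the first branch of your dichotomy is always in force near $T_+$ and yields no contradiction on its own.

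Ruling out these excursions is exactly what the collision-interval argument of \cite{JL2023CVPDE} does. At that point your proof, like the paper's, rests entirely on the citation.
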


The corresponding statement for radial \eqref{eq:NLH} is as follows.

\begin{prop}[Soliton resolution for radial \eqref{eq:NLH}, \cite{Aryan2024solResolHeat}]\label{prop:sol resol nlh}
	Let $d\geq3$, and let $u\in C([0,T_+);\dot H^1(\bbR^d))$ be a radial solution of \eqref{eq:NLH} with $T_+\in(0,\infty]$ its maximal time of existence. Assume $\sup_{0\leq t<T_+}\|u(t)\|_{\dot H^1(\bbR^d)}<\infty$.
	
	\emph{(Finite-time blow-up case)} If $T_+<\infty$, then there exist an integer $N\geq1$, signs $\iota_1,\dots,\iota_N\in\{\pm1\}$, continuous functions $\wt\lambda_1(t),\dots,\wt\lambda_N(t)\in(0,\infty)$, and a function $u^{\ast}\in \dot H^1(\bbR^d)$ such that
	\begin{equation*}
		\Big\|u(t)-\sum_{j=1}^N\iota_jW_{\wt\lambda_j(t)}-u^{\ast}\Big\|_{\dot H^1(\bbR^d)}
		+\sum_{j=2}^N\frac{\wt\lambda_j(t)}{\wt\lambda_{j-1}(t)}
		+\frac{\wt\lambda_1(t)}{\sqrt{T_+-t}}\to0\quad\text{as}\quad t\to T_+.
	\end{equation*}
	
	\emph{(Global case)} If $T_+=\infty$, then there exist an integer $N\geq0$, signs $\iota_1,\dots,\iota_N\in\{\pm1\}$, and continuous functions $\wt\lambda_1(t),\dots,\wt\lambda_N(t)\in(0,\infty)$ such that
	\begin{equation*}
		\Big\|u(t)-\sum_{j=1}^N\iota_jW_{\wt\lambda_j(t)}\Big\|_{\dot H^1_d}
		+\sum_{j=2}^N\frac{\wt\lambda_j(t)}{\wt\lambda_{j-1}(t)}
		+\frac{\wt\lambda_1(t)}{\sqrt{t}}\to0\quad\text{as}\quad t\to T_+.
	\end{equation*}
\end{prop}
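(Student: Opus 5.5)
This result is quoted from \cite{Aryan2024solResolHeat}, and we only sketch the proof we would give; it follows the strategy of Jendrej and Lawrie \cite{JL2023CVPDE} for \eqref{eq:HMHF}. The argument has two stages. First we obtain a \emph{sequential} decomposition from the gradient-flow structure. Then we upgrade it to a \emph{continuous-in-time} decomposition by ruling out oscillation between different multi-bubble configurations.

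\textbf{Step 1: dissipation and the sequential decomposition.} Since $u$ is $\dot H^1$-bounded, the energy $E(u)$ is bounded below by Sobolev's inequality. The energy identity then gives $\int_0^{T_+}\|u_t\|_{L^2}^2\,dt<\infty$.
\begin{itemize}
\item \emph{Global case.} We choose $t_n\to\infty$ with $t_n\|u_t(t_n)\|_{L^2}^2\to0$.
\item \emph{Finite-time case.} We choose $t_n\to T_+$ with $(T_+-t_n)\|u_t(t_n)\|_{L^2}^2\to0$. Here we first extract the regular part $u^\ast$ as the $\dot H^1$-weak limit. Local energy estimates away from the origin show that radial solutions converge strongly outside any ball, since the singular set of a radial solution is only $\{0\}$.
\end{itemize}
Along such times, $-\Delta u-|u|^{4/(d-2)}u$ tends to zero in a rescaled $L^2$ sense. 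We then run a radial profile decomposition, the Struwe-type compactness for Palais--Smale sequences. Its ingredients are radial Strauss-type decay, uniqueness of the radial nonzero stationary solution $\pm W$ up to scaling, and an energy-identity argument excluding energy in the necks. Together these give the decomposition with $N$ bubbles $\iota_jW_{\lambda_j(t_n)}$, ordered so that the scales separate. The bound $\lambda_1\ll\sqrt{t}$ (or $\ll\sqrt{T_+-t}$) comes from parabolic scaling. Any concentration at the parabolic scale would contradict the smallness of $\|u_t\|$ after rescaling by $\sqrt{T_+-t_n}$.

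\textbf{Step 2: continuous-in-time resolution.} Let $\mathbf d(t)$ denote the distance from $u(t)-u^\ast$ to the set of $N$-bubble configurations with ordered, separated scales, where $N$ and the signs are fixed by the energy quantization of Step 1. Suppose $\mathbf d(t)\not\to0$. Then, by continuity, there are \emph{collision intervals} $[a_n,b_n]$ on which $\mathbf d$ leaves an $\epsilon$-neighborhood and returns, while by Step 1 a smaller $\eta$-closeness holds at the endpoints.

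On such an interval we pick the largest index $K$ such that the exterior bubbles $\lambda_{K+1},\dots,\lambda_N$ remain well-separated. We then perform modulation analysis for these bubbles, imposing orthogonality conditions with a localized $\Lambda W$. The modulation equations, combined with a localized virial/energy identity, yield a lower bound on the dissipation: $\int_{a_n}^{b_n}\|u_t\|_{L^2}^2\,dt\gtrsim$ a fixed quantity depending only on $\epsilon$. This holds because leaving the multi-bubble neighborhood forces the interior configuration to change by a definite amount, and a radial no-return lemma shows this change cannot happen in time much shorter than $\lambda_K^2$. Summing over the disjoint intervals contradicts the finiteness of the total dissipation.

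\textbf{Main obstacle.} The hardest step is this no-return/dissipation lower bound. In dimensions $d=3,4$ the function $\Lambda W\notin L^2$, so the modulation parameters must be defined with truncated profiles. The interactions and the radiation then have to be controlled by $\|u_t\|_{L^2}$ uniformly in the number of bubbles. We would first try the Jendrej--Lawrie scheme: compare $\lambda_K$ at the two ends of the interval using a localized virial functional.
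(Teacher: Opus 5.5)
The paper gives no proof of this proposition. It is quoted from Aryan, whose argument adapts the Jendrej--Lawrie collision-interval method, i.e.\ the two-stage scheme you outline: a sequential Struwe decomposition, then an upgrade to continuous time using $\int_0^{T_+}\|u_t\|_{L^2}^2\,dt<\infty$. Your architecture is therefore the right one, but two steps would not go through as you state them.

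The first is the bound $\lambda_1\ll\sqrt{T_+-t}$ (or $\sqrt t$). Smallness of $\sqrt{T_+-t_n}\,\|u_t(t_n)\|_{L^2}$ at a single time cannot exclude a bubble at the parabolic scale: $W$ is stationary, so the Struwe decomposition at time $t_n$ would simply return it as a profile. What excludes it is the time-integrated dissipation,
\[
\|u(t)-u^\ast\|_{L^2}\le\sqrt{T_+-t}\,\bigl(\textstyle\int_t^{T_+}\|u_t\|_{L^2}^2\,ds\bigr)^{1/2}=o(\sqrt{T_+-t}).
\]
By contrast, a bubble of scale $\lambda\gtrsim\sqrt{T_+-t}$ carries $L^2$-mass $\sim\lambda$ on the annulus $\{\lambda/2<r<\lambda\}$. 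There the $\dot H^1$-small remainder contributes only $o(\lambda)$, by H\"older and Sobolev, and the other scale-separated bubbles are negligible. In the global case, compare instead with $u(T_1)$ for a large fixed $T_1$, using $\|u(T_1)\|_{L^2(B_R)}=o(R)$ as $R\to\infty$.

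The second, more serious, gap is the dissipation lower bound on a collision interval. A definite change of configuration, together with the fact that the change takes time $\gtrsim\lambda^2$, yields nothing. By $\|u(b)-u(a)\|_{L^2}\le\sqrt{b-a}\,(\int_a^b\|u_t\|_{L^2}^2)^{1/2}$, a longer duration only weakens the bound, and a slow drift can dissipate arbitrarily little. Indeed $\int_{a_n}^{b_n}\|u_t\|_{L^2}^2\to0$ holds automatically, which is exactly what must be contradicted. The missing idea is a quantitative compactness lemma: on a collision interval, whenever $\mathbf d(t)\ge\epsilon$, one has $\rho(t)\|\mathcal T(u(t))\|_{L^2}\ge c(\epsilon)>0$, where $\rho(t)$ is the scale of the colliding (interior) region. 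This is the contrapositive of a localized version of your Step 1. That version must hold along \emph{arbitrary} sequences of times, with the tension measured at the collision scale rather than at $\sqrt{T_+-t}$, and must be combined with energy quantization to count the interior bubbles. Only when paired with this lemma does a minimal-duration statement (the time spent with $\mathbf d\ge\epsilon$ is $\gtrsim\rho^2$) give $\int_{a_n}^{b_n}\|u_t\|_{L^2}^2\gtrsim c(\epsilon)^2$, which is the contradiction you want.

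Two smaller slips:
\begin{itemize}
\item Energy quantization fixes $N$ but not the signs, so $\mathbf d$ must also take the infimum over sign patterns; these become eventually constant once $\mathbf d\to0$.
\item In the ordering of the statement, where $\lambda_j/\lambda_{j-1}\to0$, the bubbles $\lambda_{K+1},\dots,\lambda_N$ are the interior ones, not the exterior ones.
\end{itemize}
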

We note that the proofs of the soliton resolution results above are motivated by works on soliton resolution for critical wave equations; see, for example, \cite{DuyckaertsKenigMerle2013Camb,Cote2015CPAMsolitonResol,JiaKenig2017AJMwaveSolResol,DJKM2017GaFASolresolSequence,DuyckaertsKenigMerle2023Acta,JendrejLawrie2025JAMS,JendrejLawrie2023AnnPDESolResol}. Without symmetry, a soliton resolution result for \eqref{eq:HMHF} appears in \cite{JLS2025Pi}.

\subsection{Strategy of the proof}
We use the unified notation of Section~\ref{sec:notation and pre}. For the first reading, we recommend the readers to consider only the case of \eqref{eq:NLH}. Assume the ranges of Theorems~\ref{thm:HMHF main} and \ref{thm:NLH main}, i.e.,
\begin{equation}\label{eq:dimension parameter}
	d=2D+2,\qquad 0<D<2.
\end{equation}
Let $u$ be a finite-energy solution to \eqref{eq:HMHF} or a $\dot H^1$-bounded solution to \eqref{eq:NLH}. Apply soliton resolution (Propositions~\ref{prop:sol resol hmhf} and \ref{prop:sol resol nlh}) to obtain a bubble decomposition of $u$. We need to prove that the number of bubbles is at most one, i.e., $N\leq1$.

We argue by contradiction. Suppose
\begin{equation*}
	N\geq2.
\end{equation*}
Choosing suitable $C^1$ modulation scales, we write
\begin{gather*}
	\min\{\sqrt{T_+-t},\sqrt t\}\gg\lambda_1(t)\gg\cdots\gg\lambda_N(t),
	\\
	u(t)=U(t)+g(t),\quad U(t)=\sum_{j=1}^N\iota_jW_{\lambda_j}.
\end{gather*}
Here $U$ is the pure $N$-bubble profile and $g$ is the remainder. Compared to the statement of the soliton resolution propositions, our $g$ also contains the time-independent term $u^*$, while $u^*=0$ in the global case. Crucial ingredients easily drawn from our assumption $N\geq2$ are
\begin{equation}\label{eq:intro sub self similar}
	\lambda_N(t)\ll\lambda_{N-1}(t)\ll\min\{\sqrt{T_+-t},\sqrt t\}.
\end{equation}

Motivated by the formal modulation viewpoint of \cite{KimMerle2025CPAM}, one may formally apply the standard argument to derive an evolution law for $\lambda_N$ in the present setting. In the global case $T_+=\infty$, the resulting Riccati-type law suggests that a scale-separated bubble tree cannot persist for all forward times. When $T_+<\infty$, however, the same argument does not by itself exclude a bubble tree, since the Riccati variable may become singular precisely at $T_+$ as $\lambda_N(t)\to0$. We will instead study the relative dynamics of $\lambda_{N-1}$ and $\lambda_N$ and derive a contradiction from their asymptotic behavior as $t\to T_+$.

We work with the two innermost scales because every omitted bubble then lies at a larger scale, so its contribution is lower order in the relevant estimates, such as \eqref{eq:interaction main} and \eqref{eq:HU ThetaN}. If one instead used $\lambda_1$ and $\lambda_2$ when $N>2$, the smaller bubbles $W_{\lambda_j}$ with $j\geq3$ would enter these estimates and need not be lower order without additional relations among the consecutive scale ratios, which the soliton resolution does not provide.

A natural approach is to compare the corresponding scaling directions. The single-scale projection used in the high-dimensional setting is not directly available here without localization, since $r\Lambda W\notin L^2_d$. A cutoff could be introduced, but the resulting errors may be difficult to control; see \cite{KimKimKwon2024arxiv} for a related discussion. We instead cancel the common leading tail of the two normalized scaling directions. Indeed, for some nonzero constant $c\neq 0$,
\begin{equation*}
	\lambda^{2-D}(\Lambda W)_{\underline{\lambda}}(r)
	=\lambda^{-D}(\Lambda W)_\lambda(r)
	= (c+o(1))r^{-2D}\quad\text{as}\quad r\to\infty.
\end{equation*}
Thus, the leading-order decay is independent of $\lambda$. We therefore define
\begin{equation*}
	\Theta_N\coloneqq\iota_{N-1}(\lambda_N^{2-D}(\Lambda W)_{\underline{\lambda_N}}
	-\lambda_{N-1}^{2-D}(\Lambda W)_{\underline{\lambda_{N-1}}}).
\end{equation*}
The common leading tail cancels, so $r\Theta_N\in L^2_d$, while $\Theta_N$ retains the interaction between the bubbles at $\lambda_{N-1}$ and $\lambda_N$.

We now give the formal calculation that motivates the argument in the present setting. Testing $U_t\approx\calT(U)$ against $\Theta_N$, and formally treating the contribution of $g$ as an error, gives
\begin{equation}\label{eq:intro formal law 0}
	(\Theta_N,U_t)_d=(\Theta_N,\calT(U))_d+\text{error}.
\end{equation}
Since each bubble $\iota_jW_{\lambda_j}$ is stationary, $\calT(U)$ consists of the interactions between different bubbles. By scale separation, the dominant contribution comes from the interaction between the bubbles at $\lambda_{N-1}$ and $\lambda_N$, while all other interactions are errors. Computing this contribution yields
\begin{equation}\label{eq:intro formal law 1}
	(\Theta_N,\calT(U))_d=-\frac{\kappa}{\lambda_{N-1}^D}+\text{error},
\end{equation}
where $\kappa\neq0$ is a universal constant defined in \eqref{eq:kappa def}. On the other hand, a direct calculation gives, schematically,
\begin{equation}\label{eq:intro formal law 2}
	(\Theta_N,U_t)_d
	=\frac{d}{dt}O(\lambda_{N-1}^{2-D})+\text{error}.
\end{equation}
Substituting \eqref{eq:intro formal law 1} and \eqref{eq:intro formal law 2} into \eqref{eq:intro formal law 0} yields the key differential inequality
\begin{equation*}
	\left|\frac{d}{dt}O(\lambda_{N-1}^{2-D})\right|\sim\lambda_{N-1}^{-D}.
\end{equation*}
We will show that this is incompatible with the sub-self-similar bound \eqref{eq:intro sub self similar} for $\lambda_{N-1}$ obtained from the soliton resolution.

Suppose first that $T_+<\infty$. Integrating this formal law toward $T_+$ gives
\begin{equation*}
	\lambda_{N-1}^{2-D}(t)\gtrsim\int_t^{T_+}\lambda_{N-1}^{-D}(\tau)d\tau.
\end{equation*}
Applying $\lambda_{N-1}(t)\ll\sqrt{T_+-t}$, we have
\begin{equation*}
	(T_+-t)^{\frac{2-D}{2}}\gg\lambda_{N-1}^{2-D}(t)
	\gtrsim\int_t^{T_+}\lambda_{N-1}^{-D}d\tau
	\gg\int_t^{T_+}(T_+-\tau)^{-\frac{D}{2}}d\tau
	\sim(T_+-t)^{\frac{2-D}{2}},
\end{equation*}
which is a contradiction. Here $0<D$ is used to compare the inverse powers, while $D<2$ gives the last estimate.

When $T_+=\infty$, integrating from a sufficiently large fixed time $T_0$ to $t$ and using $\lambda_{N-1}(t)\ll\sqrt t$, the same argument gives
\begin{equation*}
	t^{\frac{2-D}{2}}\gg\lambda_{N-1}^{2-D}(t)+1
	\gtrsim\int_{T_0}^t\lambda_{N-1}^{-D}d\tau
	\gg\int_{T_0}^t\tau^{-\frac{D}{2}}d\tau
	\sim t^{\frac{2-D}{2}},
\end{equation*}
which is again impossible.

Finally, to make the formal argument rigorous, we introduce
\begin{equation*}
	\begin{aligned}
		\calF_N(t)&\coloneqq\iota_N\iota_{N-1}\lambda_{N-1}^{2-D}
		\int_0^{\lambda_N/\lambda_{N-1}}(s^{-D}(\Lambda W)_s-\Lambda W,s^{-1}(\Lambda W)_s)_d ds,\\
		\mathfrak b_N(t)&\coloneqq(\Theta_N(t),g(t))_d.
	\end{aligned}
\end{equation*}
With this definition, the precise form of \eqref{eq:intro formal law 2} is
\begin{equation*}
	-(\Theta_N,U_t)_d=\partial_t\calF_N+\text{error}.
\end{equation*}
Morover, Proposition~\ref{prop:modulation estimates} provides the bound
\begin{equation*}
	|\calF_N|+|\mathfrak b_N|
	\lesssim\lambda_{N-1}^{2-D}
\end{equation*}
and the evolution law
\begin{equation*}
	\partial_t(\calF_N-\mathfrak b_N)
	=\frac{\kappa}{\lambda_{N-1}^D}+\text{error},
\end{equation*}
where the error is negligible after time integration. The preceding contradiction argument applies to $\calF_N-\mathfrak b_N$ and excludes $N\geq2$ in both the finite-time blow-up and global cases.

\vspace{5pt}
\noindent\mbox{\textbf{Acknowledgements.~}}The author would like to thank Uihyeon Jeong for helpful discussions at an early stage of this work, and Kihyun Kim for valuable discussions and helpful comments on the manuscript. The author is supported by a KIAS Individual Grant (MG105201) at Korea Institute for Advanced Study.

\section{Notation and preliminaries}\label{sec:notation and pre}

\subsection{Notation}\label{sec:notation}
We collect the notation used for the two models and multi-bubbles in a single list. 

\begin{itemize}
	\item For quantities $A\in\bbR$ and $B\geq0$, we write $A\lesssim B$ if $|A|\leq CB$ holds for some implicit constant $C$. For $A,B\geq0$, we write $A\sim B$ if $A\lesssim B$ and $B\lesssim A$. If $C$ is allowed to depend on some parameters, then we write them as subscripts of $\lesssim,\sim,\gtrsim$ to indicate the dependence.
	
	\item If $A$ is a statement, then $\chf_A$ is $1$ when $A$ is true and $0$ otherwise. If $A$ is a set, then $\chf_A$ denotes its indicator function.
	
	\item Fix a smooth cut-off function $\chi\in C_c^\infty([0,\infty))$ such that $\chi(r)=1$ for $|r|\leq1$, and $\chi(r)=0$ for $|r|\geq2$. For $R>0$, write $\chi_R(r)=\chi(r/R)$, and write $\chi_{\infty}=1$.
	
	\item We use little-$o$ notation such as $o_{t\to T_+}(1)$ for a quantity tending to zero as $t\to T_+$.
	
	\item For radial functions on $\bbR^d$ and $1\leq p<\infty$, write
	\begin{equation*}
		(h_1,h_2)_d\coloneqq\int_0^\infty h_1(r)h_2(r)r^{d-1}dr, \qquad \|h\|_{L^p_d}\coloneqq\left(\int_0^\infty|h(r)|^pr^{d-1}dr\right)^{1/p}.
	\end{equation*}
	We also use $\dot H^s_d=\dot H^s(\bbR^d)$ for the standard homogeneous Sobolev space. 
	
	\item We write the two equations in a common form using the effective dimension $d$ and the parameter $D$, where
	\begin{equation*}
		(d,D)\in\left\{\left(3,\frac12\right),(4,1),\left(5,\frac32\right)\right\}, \qquad d=2D+2.
	\end{equation*}
	For \eqref{eq:HMHF}, we use the invariance under $v\mapsto v-\ell\pi$ to take $\ell=0$. We then take $(d,D)=(4,1)$ and define
	\begin{equation*}
		\begin{gathered}
			u=r^{-1}v, \quad Q(r)=2\arctan r, \quad W=r^{-1}Q,
			\quad f(z)=z-\tfrac12\sin(2z).
		\end{gathered}
	\end{equation*}
	Here $d=4$ is the dimension of the radial function $u=r^{-1}v$, not the spatial dimension of the original harmonic map flow.
	For \eqref{eq:NLH}, retain \eqref{eq:dimension parameter} and define
	\begin{equation*}
		Q=r^DW, \qquad f(z)=|z|^{2/D}z.
	\end{equation*}
	In either model, $Q=r^DW$.
	\item Both equations take the unified form $u_t=\calT(u)$. For radial functions $\phi$ and $h$ on $\bbR^d$, define
	\begin{equation}\label{eq:NL def}
		\begin{aligned}
			\calT(h)&\coloneqq\Delta_dh+r^{-(D+2)}f(r^Dh), \qquad
			H_\phi\coloneqq-\Delta_d-r^{-2}f'(r^D\phi),\\
			\NL_\phi(h)&\coloneqq\calT(\phi+h)-\calT(\phi)+H_\phi h\\
			&=r^{-D-2}[f(r^D(\phi+h))-f(r^D\phi)-f'(r^D\phi)r^Dh].
		\end{aligned}
	\end{equation}
	Here, $\Delta_d\coloneqq\Delta_{\bbR^d}=\partial_{rr}+(d-1)r^{-1}\partial_r$ on radial functions. Also, we have
	\begin{equation}\label{eq:error equ}
		\calT(\phi+h)=\calT(\phi)-H_\phi h+\NL_\phi(h).
	\end{equation}
	
	\item For a radial function $\phi$ on $\bbR^d$ and $\lambda>0$, define
	\begin{equation*}
		\phi_\lambda(r)\coloneqq\lambda^{-D}\phi(r/\lambda), \qquad \Lambda\phi\coloneqq(D+r\partial_r)\phi.
	\end{equation*}
	Once this $\dot H^1$-critical scaling and its generator are fixed, define
	\begin{equation*}
		\phi_{\underline\lambda}(r)\coloneqq\lambda^{-2}\phi_\lambda(r), \qquad \Lambda_{-1}\phi\coloneqq(\Lambda+2)\phi, \qquad \lambda\partial_\lambda\phi_{\underline\lambda}=-(\Lambda_{-1}\phi)_{\underline\lambda}.
	\end{equation*}
	For $Q=r^DW$, write
	\begin{equation*}
		Q_{[\lambda]}(r)\coloneqq Q(r/\lambda)=r^DW_\lambda(r).
	\end{equation*}
	\item The stationary profile satisfies
	\begin{equation*}
		\Delta_dW+r^{-(D+2)}f(r^DW)=0, \qquad \calT(W_\lambda)=0.
	\end{equation*}
	Recall that $W(r)=\frac{2\arctan r}{r}$ for \eqref{eq:HMHF} and $W(r)=(1+\frac{r^2}{4D(D+1)})^{-D}$ for \eqref{eq:NLH}, and that $Q(r)=r^DW(r)$ in either model.
	For both models and every $r,\lambda>0$,
	\begin{equation}\label{eq:profile pointwise}
		|W_\lambda(r)|\lesssim
		\begin{cases}
			\lambda^{-D},&0<r\leq\lambda,\\
			r^{-D},&\lambda\leq r,\quad\text{for \eqref{eq:HMHF}},\\
			\lambda^Dr^{-2D},&\lambda\leq r,\quad\text{for \eqref{eq:NLH}},
		\end{cases}
	\end{equation}
	
	\item Recall also that $\Lambda W(r)=\frac{2}{1+r^2}$ for \eqref{eq:HMHF} and $\Lambda W(r)=(D-\frac{r^2}{4(D+1)})(1+\frac{r^2}{4D(D+1)})^{-D-1}$ for \eqref{eq:NLH}.
	From this, we obtain
	\begin{equation}\label{eq:resonance asymptotics}
		\begin{aligned}
			\Lambda W(r)&=
			\begin{cases}
				2+O(r^2),&\text{for \eqref{eq:HMHF}},\\
				D+O(r^2),&\text{for \eqref{eq:NLH}},
			\end{cases}
			\quad r\to0,
			\\
			\Lambda W(r)&=
			\begin{cases}
				2r^{-2}+O(r^{-4}),&\text{for \eqref{eq:HMHF}},\\
				-D[4D(D+1)]^Dr^{-2D}+O(r^{-2D-2}),&\text{for \eqref{eq:NLH}},
			\end{cases}
			\quad r\to\infty.
		\end{aligned}
	\end{equation}
	For both models and every $r,\lambda>0$,
	\begin{equation}\label{eq:Lambda W pointwise}
		|(\Lambda W)_\lambda(r)|\lesssim
		\begin{cases}
			\lambda^{-D},&0<r\leq\lambda,\\
			\lambda^Dr^{-2D},&\lambda\leq r.
		\end{cases}
	\end{equation}
	For \eqref{eq:HMHF}, we also have $r(\Lambda W)_\lambda=\sin Q_{[\lambda]}$.
	\item The exact one-bubble potential is
	\begin{equation}\label{eq:one bubble potential explicit}
		r^{-2}f'(Q(r))=
		\begin{cases}
			\frac{8}{(1+r^2)^2},&\text{for \eqref{eq:HMHF}},\\
			\frac{16D(D+1)^2(D+2)}{(r^2+4D(D+1))^2},&\text{for \eqref{eq:NLH}}.
		\end{cases}
	\end{equation}
	Define the one-bubble linearized operators by
	\begin{equation*}
		H\coloneqq-\Delta_d-r^{-2}f'(Q), \qquad H_\lambda\coloneqq-\Delta_d-r^{-2}f'(Q_{[\lambda]}).
	\end{equation*}
	Their scaling kernels satisfy
	\begin{equation*}
		H(\Lambda W)=H_\lambda((\Lambda W)_\lambda)=0.
	\end{equation*}
	\item For $2\leq j\leq N$, define
	\begin{equation*}
		U\coloneqq\sum_{j=1}^N\iota_jW_{\lambda_j}, \quad u=U+g, \quad P\coloneqq r^DU=\sum_{j=1}^N\iota_jQ_{[\lambda_j]}, \quad \mu_j\coloneqq \frac{\lambda_j}{\lambda_{j-1}}.
	\end{equation*}
	We view $u,W,U,g$ as radial functions on $\bbR^d$. The variables $Q$ and $P=r^DU$ are used in the nonlinear expressions involving $f$ and $f'$. 
	
	\item Fix $R_0>10$ and choose a radial function $Z\in C_c^\infty((0,\infty))$ satisfying
	\begin{equation*}
		(Z,\Lambda W)_d=1, \qquad \operatorname{supp}Z\subset[R_0^{-1},R_0], \qquad \int_0^\infty Z(r)rdr=0.
	\end{equation*}
	\item The interaction constant used in the modulation estimates is
	\begin{equation*}
		\kappa\coloneqq-W(0)(\Lambda W,r^{-2}f'(Q))_d=
		\begin{cases}
			-8,&\text{for \eqref{eq:HMHF}},\\
			2^{2D+1}D^{D+2}(D+1)^D,&\text{for \eqref{eq:NLH}}.
		\end{cases}
	\end{equation*}
\end{itemize}
\subsection{Preliminary estimates}

Recall that $d=2D+2$.

\begin{lem}
	The following inequalities hold.
	\begin{enumerate}
		\item We have
		\begin{equation}\label{eq:radial GN}
			\|\phi\|_{L^4_2}^2\lesssim\|\phi\|_{L^2_2}\|\phi_r\|_{L^2_2}.
		\end{equation}
		\item For $d\in\{3,4,5\}$, we have
		\begin{equation}\label{eq:Hardy Sobolev}
			\|r^{-1}\phi\|_{L^2_d}
			+\|\phi\|_{L^{2d/(d-2)}_d}\lesssim_d\|\phi\|_{\dot H^1_d}.
		\end{equation}
		\item For $d=3$, we have
		\begin{equation}\label{eq:radial L inf}
			|\phi(r)|\lesssim r^{-1/2}\|\phi_r\|_{L^2_3(r,\infty)},
			\qquad
			\|\phi\|_{L^\infty}^2\lesssim\|\phi\|_{\dot H^1_3}\|\Delta_3\phi\|_{L^2_3}
		\end{equation}
	\end{enumerate}
\end{lem}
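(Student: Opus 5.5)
These are standard inequalities for radial functions, so the plan is to prove each directly in the radial variable, using the fundamental theorem of calculus together with Cauchy--Schwarz. Throughout we may assume $\phi$ is smooth with compact support (or at least decays suitably at infinity) and conclude by density.

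For \eqref{eq:radial GN}, the measure is $r\,dr$, which is the two-dimensional radial measure, so this is Ladyzhenskaya's inequality on $\bbR^2$ restricted to radial functions. One can simply invoke the two-dimensional Gagliardo--Nirenberg inequality $\|\phi\|_{L^4(\bbR^2)}^2\lesssim\|\phi\|_{L^2(\bbR^2)}\|\nabla\phi\|_{L^2(\bbR^2)}$. For a self-contained radial argument, I would use the Strauss-type bound
\begin{equation*}
	r\phi(r)^2\le \int_r^\infty 2|\phi\phi_s|\,s\,ds\le 2\|\phi\|_{L^2_2}\|\phi_r\|_{L^2_2},
\end{equation*}
which follows from $\phi^2(r)=-\int_r^\infty \partial_s(\phi^2)\,ds$ and $1\le s/r$ for $s\ge r$. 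This gives only $\sup_r r\phi^2$, which is not directly enough to control $\int\phi^4r\,dr$, since one would need $\sup\phi^2$ rather than $\sup r\phi^2$. So the radial route is slightly delicate, and the cleanest path is to cite the two-dimensional inequality, whose standard proof is the one-dimensional bound $\sup_x|\phi|^2\le\int|\phi\phi_x|$ in each variable followed by Fubini.

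For \eqref{eq:Hardy Sobolev}, the bound on $\|r^{-1}\phi\|_{L^2_d}$ is Hardy's inequality in dimension $d\ge3$ with constant $2/(d-2)$. To prove it, integrate by parts:
\begin{equation*}
	\int\phi^2r^{d-3}\,dr=-\frac{2}{d-2}\int\phi\phi_r r^{d-2}\,dr,
\end{equation*}
and then apply Cauchy--Schwarz. The $L^{2d/(d-2)}$ bound is the Sobolev embedding $\dot H^1(\bbR^d)\hookrightarrow L^{2d/(d-2)}(\bbR^d)$, which applies to radial functions viewed on $\bbR^d$.

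For \eqref{eq:radial L inf} with $d=3$, write $\phi(r)=-\int_r^\infty\phi_s\,ds$ and apply Cauchy--Schwarz with weight $s^2$:
\begin{equation*}
	|\phi(r)|\le\Big(\int_r^\infty s^{-2}\,ds\Big)^{1/2}\|\phi_r\|_{L^2_3(r,\infty)}=r^{-1/2}\|\phi_r\|_{L^2_3(r,\infty)}.
\end{equation*}
The second inequality is Agmon's inequality in three dimensions, $\|\phi\|_{L^\infty}^2\lesssim\|\phi\|_{\dot H^1}\|\phi\|_{\dot H^2}$, together with $\|\phi\|_{\dot H^2}=\|\Delta\phi\|_{L^2}$. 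I would prove it by a Littlewood--Paley or Fourier splitting:
\begin{equation*}
	\|\phi\|_{L^\infty}\le\int|\hat\phi|\,d\xi\le\int_{|\xi|<R}+\int_{|\xi|>R}\lesssim R^{1/2}\|\phi\|_{\dot H^1}+R^{-1/2}\|\phi\|_{\dot H^2}.
\end{equation*}
Here Cauchy--Schwarz is applied with $|\xi|^{-2}$ on the low frequencies, using $\int_{|\xi|<R}|\xi|^{-2}\,d\xi\sim R$, and with $|\xi|^{-4}$ on the high frequencies, using $\int_{|\xi|>R}|\xi|^{-4}\,d\xi\sim R^{-1}$. Optimizing in $R$ gives the claim. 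The main point requiring care is the Agmon step, although it is classical; everything else is routine.
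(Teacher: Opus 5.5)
Your proposal is correct and matches the paper's proof essentially step for step. The paper likewise cites (1) as the two-dimensional Gagliardo--Nirenberg inequality and (2) as the Hardy and Sobolev inequalities. For (3) it uses the same fundamental-theorem-of-calculus plus Cauchy--Schwarz bound with $\int_r^\infty s^{-2}\,ds$, and the same Fourier splitting at $|\xi|=R$, optimized in $R$; your explicit integration-by-parts proof of Hardy is just a self-contained addition.
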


\begin{proof}
	Assertions (1) and (2) are the radial Gagliardo--Nirenberg inequality on $\bbR^2$ and the Hardy and Sobolev inequalities on $\bbR^d$. For (3),
	\begin{equation*}
		|\phi(r)|\leq\|\phi_r\|_{L^2_3(r,\infty)}({\textstyle\int_r^\infty} s^{-2}ds)^{1/2}.
	\end{equation*}
	For every $R>0$, apply Cauchy--Schwarz on $\{|\xi|\leq R\}$ and $\{|\xi|>R\}$:
	\begin{equation*}
		\|\phi\|_{L^\infty}
		\lesssim R^{1/2}\|\phi\|_{\dot H^1_3}+R^{-1/2}\|\Delta_3\phi\|_{L^2_3}.
	\end{equation*}
	Choose $R=\|\Delta_3\phi\|_{L^2_3}/\|\phi\|_{\dot H^1_3}$.
\end{proof}

\begin{lem}
	Let $D\in\{\frac12,1,\frac32\}$ and $d=2D+2$. The following inequalities hold:
	\begin{align}
		\|(r^Dh_r)_r\|_{L^2_2}+\|r^{-1}h_r\|_{L^2_d}
		&\lesssim \|\Delta_dh\|_{L^2_d},\label{eq:second order Hardy}\\
		\|r^Dh\|_{L^\infty}&\lesssim \|h\|_{\dot H^1_d}.\label{eq:radial Linfty}
		\\
		\int_0^\infty h^2r^{D-1}dr &\lesssim \|h\|_{\dot H^1_d}^{2-D}\|h\|_{\dot H^2_d}^D. \label{eq:weighted interpolation}
	\end{align}
	If $D\in\{1,\frac32\}$, then
	\begin{align}
		\|r^{D-1}h\|_{L^4_2}^2=\|r^{D-2}h^2\|_{L^2_d}&\lesssim \|r^Dh_r\|_{L^4_2}^2
		\lesssim _D\|h\|_{\dot H^1_d}\|\Delta_dh\|_{L^2_d}. \label{eq:first order L4}
	\end{align}
\end{lem}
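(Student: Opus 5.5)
We prove the four bounds in the order stated. Each reduces to a one-dimensional computation in $r$ combined with the classical inequalities of the previous lemma. Throughout we argue for $h\in C_c^\infty$ radial, so that all boundary terms vanish, and conclude by density. The only relation used repeatedly is $d-1=2D+1$. In particular,
\begin{equation*}
\int_0^\infty (r^D\psi)^2\, r\,dr=\|\psi\|_{L^2_d}^2,
\end{equation*}
which converts $L^2_2$ norms with a weight $r^D$ into $L^2_d$ norms.

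\emph{Step 1: \eqref{eq:second order Hardy}.} Set $w=h_r$, so that $\Delta_dh=w_r+(d-1)r^{-1}w$. Expanding the square gives
\begin{equation*}
\|\Delta_dh\|_{L^2_d}^2=\|w_r\|_{L^2_d}^2+(d-1)^2\|r^{-1}w\|_{L^2_d}^2+(d-1)\int_0^\infty (w^2)_r\,r^{d-2}dr .
\end{equation*}
Integrating by parts, the cross term equals $-(d-1)(d-2)\|r^{-1}w\|_{L^2_d}^2$. Hence
\begin{equation*}
\|\Delta_dh\|_{L^2_d}^2=\|h_{rr}\|_{L^2_d}^2+(d-1)\|r^{-1}h_r\|_{L^2_d}^2 .
\end{equation*}
This controls both $\|h_{rr}\|_{L^2_d}$ and $\|r^{-1}h_r\|_{L^2_d}$. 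Next, $(r^Dh_r)_r=r^Dh_{rr}+Dr^{D-1}h_r$. By the weight identity above, its $L^2_2$ norm is bounded by $\|h_{rr}\|_{L^2_d}+D\|r^{-1}h_r\|_{L^2_d}$, which is $\lesssim\|\Delta_dh\|_{L^2_d}$.

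\emph{Step 2: \eqref{eq:radial Linfty}.} Write $h(r)=-\int_r^\infty h_s\,ds$ and apply Cauchy--Schwarz with the weight $s^{d-1}$:
\begin{equation*}
|h(r)|\le\|h_r\|_{L^2_d}\Big(\int_r^\infty s^{1-d}ds\Big)^{1/2}\sim r^{-D}\|h\|_{\dot H^1_d}.
\end{equation*}
This is the claim.

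\emph{Step 3: \eqref{eq:weighted interpolation}.} Since $r^{D-1}=r^{d-1}\,r^{-(D+2)}$, the left-hand side equals $\||x|^{-s}h\|_{L^2(\bbR^d)}^2$ with $s=1+\tfrac D2$. Because $D>0$ we have $s<d/2=D+1$, so the fractional Hardy (Rellich--Herbst) inequality gives $\||x|^{-s}h\|_{L^2}\lesssim\|h\|_{\dot H^s}$. Interpolating $\dot H^s$ between $\dot H^1$ and $\dot H^2$ in Fourier space (H\"older's inequality) gives
\begin{equation*}
\|h\|_{\dot H^s}\le\|h\|_{\dot H^1}^{1-D/2}\|h\|_{\dot H^2}^{D/2}.
\end{equation*}
Squaring yields the stated exponents $2-D$ and $D$. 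If one prefers an elementary radial proof, split the integral at a radius $R$: bound $r\ge R$ using Step 2, bound $r\le R$ using an $L^\infty$ bound on $r^{D-1/2}h$ obtained from $\dot H^2$, and optimize in $R$.

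\emph{Step 4: \eqref{eq:first order L4}.} The identity on the left is direct, since both sides equal
\begin{equation*}
\Big(\int_0^\infty r^{4D-3}h^4\,dr\Big)^{1/2}.
\end{equation*}
For the first inequality we use the weighted Hardy inequality from infinity,
\begin{equation*}
\int_0^\infty r^a|h|^4\,dr\le\big(\tfrac4{a+1}\big)^4\int_0^\infty r^{a+4}|h_r|^4\,dr\qquad\text{for } a>-1,
\end{equation*}
with $a=4D-3$. The condition $a>-1$ is exactly $D>\tfrac12$, which explains the restriction $D\in\{1,\tfrac32\}$. The right-hand side is $\|r^Dh_r\|_{L^4_2}^4$. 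For the last inequality, apply \eqref{eq:radial GN} to $\phi=r^Dh_r$. This gives $\|\phi\|_{L^4_2}^2\lesssim\|\phi\|_{L^2_2}\|\phi_r\|_{L^2_2}$. Here $\|\phi\|_{L^2_2}=\|h\|_{\dot H^1_d}$ by the weight identity, and $\|\phi_r\|_{L^2_2}\lesssim\|\Delta_dh\|_{L^2_d}$ by Step 1.

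The main obstacle is Step 3, which is the only non-elementary bound. There I would invoke the fractional Hardy inequality and interpolation rather than redo them by hand. The remaining subtlety is the density argument, namely checking that boundary terms at $r=0$ and $r=\infty$ vanish. This is where $D>0$ in Step 1 and $D>\tfrac12$ in Step 4 enter.
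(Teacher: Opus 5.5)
Your proof is correct. Steps 1, 2 and 4 are essentially the paper's arguments with minor variations, and Step 3 takes a genuinely different route.

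\textbf{Steps 1, 2 and 4.} In Step 1 the paper writes $r^D\Delta_dh=(r^Dh_r)_r+(D+1)r^{D-1}h_r$. There the cross term is an exact derivative, which gives $\|\Delta_dh\|_{L^2_d}^2=\|(r^Dh_r)_r\|_{L^2_2}^2+(D+1)^2\|r^{-1}h_r\|_{L^2_d}^2$ directly; you expand in $h_{rr}$ and recombine instead. For \eqref{eq:radial Linfty} the paper bounds $r^{2D}h^2$ by $2\int_r^\infty|hh_r|s^{2D}ds$ and then uses Hardy, while you apply Cauchy--Schwarz to $h_r$ alone. For the first inequality in \eqref{eq:first order L4} the paper substitutes $r=e^x$, $G=e^{(D-1/2)x}h(e^x)$ and applies Young's inequality with the kernel $e^{-cy}\chf_{y>0}$, $c=D-\frac12$. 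That is just a proof of the weighted $L^4$ Hardy inequality you quote, and the condition $c>0$ is your $a>-1$.

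\textbf{Step 3.} The paper stays elementary. It integrates by parts, $D\int h^2r^{D-1}dr=-2\int hh_rr^Ddr$, and uses Cauchy--Schwarz to reduce to $\int h_r^2r^{D+1}dr$. H\"older in the weight between $r^{2D+1}$ and $r^{2D-1}$, with exponents $\frac{2-D}{2}$ and $\frac D2$, bounds this by $\|h\|_{\dot H^1_d}^{2-D}\|r^{-1}h_r\|_{L^2_d}^D$, and \eqref{eq:second order Hardy} then finishes.

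You instead recognize the left side as $\||x|^{-s}h\|_{L^2(\bbR^d)}^2$ with $s=1+\frac D2$, then use Herbst's fractional Hardy inequality and Fourier interpolation. This is valid, since $s<\frac d2$ is exactly $D>0$, and it shows cleanly why the bound holds. The cost is importing a non-elementary inequality, whereas the paper's argument is self-contained and reuses Step 1.

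\textbf{Caveat on your fallback.} The optional ``elementary radial proof'' you sketch for Step 3 does not work as stated for $D\in\{1,\frac32\}$.
\begin{itemize}
\item A bound on $\|r^{D-1/2}h\|_{L^\infty}$ cannot come from $\|h\|_{\dot H^2_d}$ alone, because the two quantities scale differently.
\item The scaling-consistent version, $\|r^{D-1/2}h\|_{L^\infty}^2\lesssim\|h\|_{\dot H^1_d}\|h\|_{\dot H^2_d}$, is true. However, it leaves the inner integral $\int_0^Rr^{1-2D}r^{D-1}dr=\int_0^Rr^{-D}dr$, which diverges when $D\geq1$.
\end{itemize}
So the split at radius $R$ only closes for $D=\frac12$. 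Either drop that remark or replace it with the paper's integration by parts.
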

\begin{proof}
	Let $D\in\{\frac12,1,\frac32\}$.
	For \eqref{eq:second order Hardy}, integrating by parts, we find
	\begin{equation*}
		\|\Delta_dh\|_{L^2_d}^2
		=\|(r^Dh_r)_r\|_{L^2_2}^2+(D+1)^2\|r^{D-1}h_r\|_{L^2_2}^2.
	\end{equation*}
	Furthermore, for \eqref{eq:radial Linfty},
	\begin{equation*}
		r^{2D}|h(r)|^2\leq2\int_r^\infty|h h_r|s^{2D}ds\lesssim \|h\|_{\dot H^1_d}^2
	\end{equation*}
	by \eqref{eq:Hardy Sobolev}. To show \eqref{eq:weighted interpolation}, integrate by parts and apply Cauchy--Schwarz:
	\begin{equation*}
		D\int_0^\infty h^2r^{D-1}dr
		=-2\int_0^\infty hh_rr^Ddr
		\lesssim \left(\int_0^\infty h^2r^{D-1}dr\right)^{1/2}
		\left(\int_0^\infty h_r^2r^{D+1}dr\right)^{1/2}.
	\end{equation*}
	Interpolating between the weights $r^{2D+1}$ and $r^{2D-1}$, and using \eqref{eq:second order Hardy}, we obtain
	\begin{equation*}
		\int_0^\infty h_r^2r^{D+1}dr
		\leq\|h\|_{\dot H^1_d}^{2-D}\|r^{-1}h_r\|_{L^2_d}^D
		\lesssim \|h\|_{\dot H^1_d}^{2-D}\|\Delta_dh\|_{L^2_d}^D.
	\end{equation*}
	Cancelling the term $\left(\int_0^\infty h^2r^{D-1}dr\right)^{1/2}$, we arrive at \eqref{eq:weighted interpolation}.

	Let $D\in\{1,\frac32\}$. For the first inequality of \eqref{eq:first order L4}, let $r=e^x$ and $G(x)=e^{(D-1/2)x}h(e^x)$. Then
	\begin{equation*}
		\|r^{D-1}h\|_{L^4_2}=\|G\|_{L^4_x}, \qquad
		\|r^Dh_r\|_{L^4_2}=\|[-\partial_x+(D-\tfrac12)]G\|_{L^4_x}.
	\end{equation*}
	For $c=D-\frac12>0$, we have $\partial_y(e^{-c(y-x)}G(y))=-e^{-c(y-x)}(-\partial_y+c)G(y)$, or
	\begin{equation*}
		G(x)=\int_x^\infty e^{-c(y-x)}(-\partial_y+c)G(y)dy.
	\end{equation*}
	Young's inequality now implies \eqref{eq:first order L4}. Next, applying \eqref{eq:radial GN} and \eqref{eq:second order Hardy}, we conclude the second inequality of \eqref{eq:first order L4}.
\end{proof}

\begin{lem}[Coercivity estimate for $H$]\label{lem:H coercivity}
	Let $Z$ be fixed by \eqref{eq:Z conditions}. For every $\lambda>0$, if $(Z_{\underline\lambda},h)_d=0$, then
	\begin{equation}\label{eq:H coercivity}
		\|h\|_{\dot H^2_d}+\|r^{-2}f'(Q_{[\lambda]})h\|_{L^2_d}
		\lesssim\|H_\lambda h\|_{L^2_d},
	\end{equation}
	with a constant independent of $\lambda$.
\end{lem}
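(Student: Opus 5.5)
By scaling we reduce the statement to $\lambda=1$. Set $h_\lambda(r)=\lambda^{2}h(\lambda r)$, so that $h=(h_\lambda)_{\underline{1/\lambda}}$ up to the obvious normalization. Then $(Z_{\underline\lambda},h)_d=0$ becomes $(Z,h_\lambda)_d=0$, and $H_\lambda h$ rescales to $\lambda^{-2}(Hh_\lambda)$ in the appropriate sense. Each of the three quantities $\|h\|_{\dot H^2_d}$, $\|r^{-2}f'(Q_{[\lambda]})h\|_{L^2_d}$ and $\|H_\lambda h\|_{L^2_d}$ picks up the same power of $\lambda$ under this $\dot H^2$-critical scaling, so the constant is independent of $\lambda$. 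It therefore suffices to prove
\[
\|h\|_{\dot H^2_d}+\|r^{-2}f'(Q)h\|_{L^2_d}\lesssim\|Hh\|_{L^2_d}\quad\text{whenever }(Z,h)_d=0.
\]
We may work with $h\in\dot H^2_d$ (radial), by density.

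The second term on the left is harmless. By \eqref{eq:one bubble potential explicit}, $V\coloneqq r^{-2}f'(Q)\lesssim\langle r\rangle^{-4}$. Writing $r^{-2}f'(Q)h=(r^2V)\cdot r^{-2}h$ with $r^2V$ bounded, and using a second-order Hardy inequality $\|r^{-2}h\|_{L^2_d}\lesssim\|h\|_{\dot H^2_d}$, it is controlled by $\|h\|_{\dot H^2_d}$. For $d=3$ this Hardy inequality fails, and I would instead bound $\|Vh\|_{L^2_3}$ using $V\in L^2_3$ together with $\|h\|_{L^\infty}$ from \eqref{eq:radial L inf}, or \eqref{eq:weighted interpolation}, or a local bound near the origin. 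Either way the task reduces to
\[
\|h\|_{\dot H^2_d}\lesssim\|Hh\|_{L^2_d}.
\]

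I would prove this by contradiction and compactness. Suppose $h_n$ satisfies $\|h_n\|_{\dot H^2_d}=1$, $(Z,h_n)_d=0$ and $\|Hh_n\|_{L^2_d}\to0$. Pass to a weak limit $h_n\rightharpoonup h_\infty$ in $\dot H^2_d$. Because $V$ decays like $\langle r\rangle^{-4}$, the map $h\mapsto Vh$ is compact from $\dot H^2_d$ to $L^2_d$; this follows from local Rellich compactness plus smallness of the tails, the tails being controlled by the Hardy or weighted bounds above. Hence $Vh_n\to Vh_\infty$ strongly. In the limit, $Hh_\infty=0$ with $h_\infty\in\dot H^2_d$, and $(Z,h_\infty)_d=0$ because $Z$ is compactly supported away from $0$.

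The key ODE fact is that the radial kernel of $H$ in $\dot H^2_d$, for $d\in\{3,4,5\}$ (and for the HMHF operator with $d=4$), is spanned by $\Lambda W$. The second solution of $H\phi=0$ behaves like $r^{-(d-2)}$ near $0$, so it is not in $\dot H^2_d$ locally for $d\le 5$. One also has to check that $\Lambda W\in\dot H^2_d$: $\Delta\Lambda W=-V\Lambda W$ is in $L^2_d$, so this holds even though $\Lambda W\notin L^2_d$. Hence $h_\infty=c\Lambda W$, and $(Z,\Lambda W)_d=1$ forces $c=0$. Then $h_\infty=0$, so $Vh_n\to0$ in $L^2_d$, and $\|\Delta_dh_n\|_{L^2_d}\le\|Hh_n\|_{L^2_d}+\|Vh_n\|_{L^2_d}\to0$. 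This contradicts $\|h_n\|_{\dot H^2_d}=1$.

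I expect the main obstacle to be the compactness of $h\mapsto Vh$ from $\dot H^2_d$ into $L^2_d$ in the low dimensions, especially $d=3$. There $\dot H^2_3$ functions need not decay, so one must fix the normalization, for instance by realizing $\dot H^2_d$ as a completion of $C_c^\infty$ with $h\to0$ at infinity, to exclude constants. One then controls the tail with $\|Vh\|_{L^2(r>R)}\lesssim R^{-1/2}\|h\|_{L^\infty}$ and the $\dot H^1$–$\dot H^2$ interpolation \eqref{eq:radial L inf}. This requires an $\dot H^1$ bound as well; in the paper's setting $h\in\dot H^1_d\cap\dot H^2_d$, and I would run the compactness argument with the tail bound $\|Vh\|_{L^2(r>R)}\lesssim R^{-1/2}\|h\|_{L^\infty}$ in that class. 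If necessary, I would instead replace the compactness argument with an explicit ODE inversion of $H$ via the fundamental pair $\Lambda W$ and its second solution, showing directly that $h-(Z,h)_d\Lambda W$ obeys the bound through Wronskian formulas.
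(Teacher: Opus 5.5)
Your skeleton is the paper's: rescale to $\lambda=1$, take a contradiction sequence with $\|h_n\|_{\dot H^2_d}=1$, obtain strong convergence of $Vh_n$, identify the radial kernel as $\mathrm{span}\{\Lambda W\}$, and remove it with $(Z,\Lambda W)_d=1$. The gap is the compactness step, which fails exactly for $d\in\{3,4\}$, i.e.\ $D=\frac12$ and $D=1$; the latter includes the harmonic map flow.

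You assert that $h\mapsto Vh$ is compact from $\dot H^2_d$ to $L^2_d$. For $d=3,4$ this map is not even bounded, because $\|\Delta_d h\|_{L^2_d}$ gives no local control of $h$:
\begin{itemize}
\item in $d=3$, $\chi_R\equiv1$ on $\{r\le R\}$, while $\|\Delta_3\chi_R\|_{L^2_3}^2\lesssim R^{-1}$;
\item in $d=4$, take $h_R(r)=\psi(\log r/\log R)$ with $\psi=1$ on $(-\infty,1]$ and $\psi=0$ on $[2,\infty)$; then $h_R\equiv1$ on $\{r\le R\}$ and $\|\Delta_4h_R\|_{L^2_4}^2\lesssim(\log R)^{-1}$.
\end{itemize}
In both cases $\|Vh_R\|_{L^2_d}\gtrsim1$. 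For the same reason, the Rellich inequality $\|r^{-2}h\|_{L^2_d}\lesssim\|h\|_{\dot H^2_d}$ that you use for the second term holds here only for $d=5$, not for $d=4$. Your repair for $d=3$ via $\|h\|_{L^\infty}^2\lesssim\|h\|_{\dot H^1_3}\|h\|_{\dot H^2_3}$ needs a uniform $\dot H^1$ bound on $h_n$. A sequence normalized only by $\|h_n\|_{\dot H^2_d}=1$ has no such bound, and you offer nothing for $d=4$. As written, the argument covers only $d=5$.

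The missing idea, which is how the paper proceeds, is to take the bounds from the equation rather than from $\dot H^2_d$.
\begin{itemize}
\item The identity $Vh_n=-\Delta_dh_n-Hh_n$ gives $\|Vh_n\|_{L^2_d}\le1+o(1)$.
\item Since $V\gtrsim_R1$ on $\{r<2R\}$ by \eqref{eq:one bubble potential explicit}, this yields $\|\chf_{\{r<2R\}}h_n\|_{L^2_d}\lesssim_R1$. Hence $h_n$ is bounded in $H^2$ on balls containing the origin, and Rellich applies locally.
\item This also puts the weak limit in $H^2_{\mathrm{loc}}(\bbR^d)$, which is what excludes the singular solution $\sim r^{-(d-2)}$ of $H\phi=0$.
\item The tail is handled by a Hardy inequality on $(1,\infty)$ anchored at $r\sim1$, which needs no decay at infinity: $\|\chf_{\{r>1\}}r^{-3}h_n\|_{L^2_d}\lesssim\|\chf_{\{r\sim1\}}h_n\|_{L^2_d}+\|r^{-1}\partial_rh_n\|_{L^2_d}\lesssim1$ by \eqref{eq:second order Hardy}. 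Hence $\|\chf_{\{r>R\}}Vh_n\|_{L^2_d}\lesssim R^{-1}$.
\end{itemize}
The second term in \eqref{eq:H coercivity} then follows from $\|Vh\|_{L^2_d}\le\|h\|_{\dot H^2_d}+\|Hh\|_{L^2_d}$, with no Hardy inequality at all.
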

\begin{proof}
	By scaling, it is enough to consider $\lambda=1$. Indeed,
	\begin{equation*}
		(Z_{\underline\lambda},h)_d=(Z,\lambda^Dh(\lambda\cdot))_d,\qquad
		\|h\|_{\dot H^2_d}=\lambda^{-1}\|-\Delta_d(\lambda^Dh(\lambda\cdot))\|_{L^2_d},
	\end{equation*}
	and
	\begin{equation*}
		\|H_\lambda h\|_{L^2_d}=\lambda^{-1}\|H(\lambda^Dh(\lambda\cdot))\|_{L^2_d}.
	\end{equation*}
	We prove the estimate by contradiction. Otherwise, there exist radial functions $h_n$ such that
	\begin{equation*}
		(Z,h_n)_d=0,\qquad
		\|h_n\|_{\dot H^2_d}=1,\qquad
		\|Hh_n\|_{L^2_d}\to0.
	\end{equation*}
	Since $H=-\Delta_d-r^{-2}f'(Q)$, we have
	\begin{equation}\label{eq:H potential mass}
		1\gtrsim\|r^{-2}f'(Q)h_n\|_{L^2_d}
		\geq\|h_n\|_{\dot H^2_d}-\|Hh_n\|_{L^2_d}
		=1-o_n(1).
	\end{equation}
	We claim that $r^{-2}f'(Q)h_n$ is precompact in $L^2_d$. For every $R>1$, \eqref{eq:one bubble potential explicit} implies
	\begin{equation*}
		r^{-2}f'(Q(r))\gtrsim_R1,\qquad 0<r<2R.
	\end{equation*}
	Hence,
	\begin{equation*}
		\|\chf_{\{r<2R\}}h_n\|_{L^2_d}
		\lesssim_R\|r^{-2}f'(Q)h_n\|_{L^2_d}
		\lesssim_R1.
	\end{equation*}
	From $\|\Delta_dh_n\|_{L^2_d}=1$ and the Rellich--Kondrachov theorem, we obtain the local precompactness of $r^{-2}f'(Q)h_n$.
	
	By averaging on $r\sim1$, \eqref{eq:Hardy Sobolev}, and \eqref{eq:second order Hardy}, we obtain
	\begin{equation*}
		\|\chf_{\{r>1\}}r^{-3}h_n\|_{L^2_d}^2
		\lesssim\|\chf_{\{r\sim1\}}h_n\|_{L^2_d}^2
		+\|\chf_{\{r>1\}}r^{-2}\partial_rh_n\|_{L^2_d}^2
		\lesssim1.
	\end{equation*}
	Since $r^{-2}f'(Q(r))\lesssim r^{-4}$ for $r\geq1$, we have
	\begin{equation*}
		\|\chf_{\{r>R\}}r^{-2}f'(Q)h_n\|_{L^2_d}^2
		\lesssim R^{-2}\|\chf_{\{r>1\}}r^{-3}h_n\|_{L^2_d}^2
		\lesssim R^{-2}.
	\end{equation*}
	Since $R>1$ is arbitrary, $r^{-2}f'(Q)h_n$ is precompact in $L^2_d$. Thus, passing to a subsequence, we obtain
	\begin{equation*}
		h_n\rightharpoonup h\quad\text{in}\quad\dot H^2_d,\qquad
		r^{-2}f'(Q)h_n\to r^{-2}f'(Q)h\quad\text{in}\quad L^2_d.
	\end{equation*}
	In particular, \eqref{eq:H potential mass} implies $h\neq0$, and $(Z,h)_d=0$. On the other hand, for every radial $\varphi\in C_c^\infty(0,\infty)$,
	\begin{equation*}
		(\varphi,Hh)_d
		=\lim_{n\to\infty}(\varphi,Hh_n)_d=0.
	\end{equation*}
	Thus, $Hh=0$. Since the radial kernel of $H$ is spanned by $\Lambda W$ and $(Z,\Lambda W)_d=1$, the orthogonality $(Z,h)_d=0$ forces $h=0$, contradicting $h\neq0$. Therefore,
	\begin{equation*}
		\|h\|_{\dot H^2_d}\lesssim\|H_\lambda h\|_{L^2_d}.
	\end{equation*}
	Finally, from the definition of $H_\lambda$, we arrive at
	\begin{equation*}
		\|r^{-2}f'(Q_{[\lambda]})h\|_{L^2_d}
		\leq\|h\|_{\dot H^2_d}+\|H_\lambda h\|_{L^2_d}
		\lesssim\|H_\lambda h\|_{L^2_d}. \qedhere
	\end{equation*}
\end{proof}

\begin{lem}[Nonlinear estimates]\label{lem:nonlinear estimates}
	For \eqref{eq:HMHF}, $a,b,a_1,\ldots,a_J\in\bbR$, and $J\geq2$, we have
	\begin{equation}\label{eq:HMHF nonlinear esti}
		\begin{aligned}
			|f'(a+b)-f'(a)|&\lesssim  |\sin b|(|\sin a|+|\sin b|),\\
			|f(a+b)-f(a)-f'(a)b|&\lesssim|b|^2,\\
			|f(a+b)-f(a)-f(b)|&\lesssim|\sin a||\sin b|(|\sin a|+|\sin b|),\\
			\bigg|f\bigg(\sum_{j=1}^Na_j\bigg)-\sum_{j=1}^Nf(a_j)\bigg|&\lesssim\sum_{1\leq j<k\leq N}|\sin a_j||\sin a_k|(|\sin a_j|+|\sin a_k|).
		\end{aligned}
	\end{equation}
	For \eqref{eq:NLH}, $D\in\{\frac12,1,\frac32\}$, $a,b,a_1,\ldots,a_J\in\bbR$, and $J\geq2$, we have
	\begin{equation}\label{eq:NLH nonlinear esti}
		\begin{aligned}
			|f'(a+b)-f'(a)|&\lesssim |a|^{2/D-1}|b|+|b|^{2/D},\\
			|f(a+b)-f(a)-f'(a)b|&\lesssim |a|^{2/D-1}|b|^2+|b|^{1+2/D},\\
			|f(a+b)-f(a)-f(b)|&\lesssim |a|^{2/D}|b|+|b|^{2/D}|a|,\\
			\bigg|f\bigg(\sum_{j=1}^N a_j\bigg)-\sum_{j=1}^Nf(a_j)\bigg|&\lesssim \sum_{1\leq j<k\leq N}\left(|a_j|^{2/D}|a_k|+|a_k|^{2/D}|a_j|\right).
		\end{aligned}
	\end{equation}
\end{lem}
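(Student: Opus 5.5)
The estimates are elementary pointwise inequalities, so the plan is to prove them by Taylor expansion plus a case split according to which argument dominates. For \eqref{eq:HMHF}, recall $f(z)=z-\frac12\sin(2z)$, so $f'(z)=1-\cos(2z)=2\sin^2 z$ and $f''(z)=2\sin(2z)$.

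\emph{First line for \eqref{eq:HMHF}.} We have
\[
f'(a+b)-f'(a)=2(\sin(a+b)-\sin a)(\sin(a+b)+\sin a).
\]
We then use $\sin(a+b)=\sin a\cos b+\cos a\sin b$. This gives $\sin(a+b)-\sin a=\sin a(\cos b-1)+\cos a\sin b$. Since $|1-\cos b|=2\sin^2(b/2)\lesssim|\sin b|$ when $|\cos b|$ is near $1$, a direct check is still needed. The cleanest route is the identity
\[
\sin^2(a+b)-\sin^2 a=\sin b\,\sin(2a+b).
\]
Expanding $\sin(2a+b)=\sin 2a\cos b+\cos 2a\sin b$, we get $|\sin(2a+b)|\lesssim|\sin a|+|\sin b|$, which proves the first line exactly.

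\emph{Second line for \eqref{eq:HMHF}.} This is Taylor's theorem with $|f''|\le 2$.

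\emph{Third line for \eqref{eq:HMHF}.} Compute
\[
f(a+b)-f(a)-f(b)=-\tfrac12\bigl[\sin(2a+2b)-\sin2a-\sin2b\bigr].
\]
By the product formulas,
\[
\sin(2a+2b)-\sin 2a-\sin 2b=\sin2a(\cos2b-1)+\sin2b(\cos2a-1)=-2\sin2a\sin^2b-2\sin2b\sin^2a.
\]
Since $|\sin 2a|\lesssim|\sin a|$, this is bounded by $|\sin a||\sin b|(|\sin a|+|\sin b|)$.

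\emph{Fourth line for \eqref{eq:HMHF}.} We argue by induction on $J$. Apply the third line with $a=\sum_{j<J}a_j$ and $b=a_J$. The term $|\sin a|$ is then controlled by $\sum_{j<J}|\sin a_j|$, since $\sin$ of a sum is bounded by the sum of $|\sin|$ by iterating the addition formula. Expanding $|\sin a||\sin a_J|(|\sin a|+|\sin a_J|)$ yields pair terms $|\sin a_j||\sin a_J|^2$ and cross terms $|\sin a_j||\sin a_k||\sin a_J|$. The cross terms are absorbed by AM–GM into pair terms of the form $|\sin a_j|^2|\sin a_J|+|\sin a_k|^2|\sin a_J|$.

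\emph{The \eqref{eq:NLH} case.} Here $f(z)=|z|^{p-1}z$ with $p=1+2/D\in\{5,3,7/3\}$, so $p-1=2/D\in\{4,2,4/3\}$. We split into the regimes $|b|\le|a|/2$ and $|b|\ge|a|/2$.

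In the first regime, $|a+\theta b|\sim|a|$. The first two lines then follow from the mean value theorem applied to $f'$, using $|f''(z)|\lesssim|z|^{p-2}$. For the third line, note that $p-2=2/D-1\ge 1/3>0$, so $|f''|$ is monotone in $|z|$. Taylor expansion gives $|f(a+b)-f(a)|\lesssim|a|^{p-1}|b|$, and trivially $|f(b)|=|b|^p\le|a|^{p-1}|b|$.

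In the second regime, $|a|\lesssim|b|$, and every term is bounded by $|b|^p$ or $|b|^{p-1}$ directly. In particular, $|f'(a+b)-f'(a)|\lesssim|b|^{p-1}$. For the second line, $|f(a+b)|+|f(a)|+|f'(a)b|\lesssim|b|^p$. The third line is symmetric in $a,b$, so we may swap their roles, and the same argument gives $|a|^{p-1}|b|+|b|^{p-1}|a|$.

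The only delicate point is $D=3/2$, where $p-1=4/3<2$ and $f'$ is merely $C^{1,1/3}$. Even there, $f'(z)=p|z|^{4/3}$ is locally Lipschitz away from the origin, with the bound $|f''(z)|\lesssim|z|^{1/3}$. This is exactly what the first-regime argument uses, so no Hölder-type loss appears.

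The fourth line again follows by induction from the third line. Write $a=\sum_{j<J}a_j$, and bound $|a|^{p-1}\lesssim\sum_{j<J}|a_j|^{p-1}$ and $|a|\le\sum_{j<J}|a_j|$. The resulting terms are exactly of the pairwise form $|a_j|^{p-1}|a_J|+|a_J|^{p-1}|a_j|$.

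I expect the main obstacle to be purely bookkeeping: the \eqref{eq:HMHF} estimates must be stated with $\sin$ rather than the raw arguments, so that they remain valid when $Q$ takes values near multiples of $\pi$. This requires using the exact trigonometric identities above, not Taylor expansion around $0$.
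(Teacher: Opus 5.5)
Your proof is correct and follows essentially the paper's route. For the harmonic map case the paper uses exactly your identities $f'(a+b)-f'(a)=2\sin(2a+b)\sin b$ and $f(a+b)-f(a)-f(b)=2\sin(a+b)\sin a\sin b$ (your $\sin 2a\sin^2 b+\sin 2b\sin^2 a$ simplifies to the latter), and both models close by the same induction. For the heat equation the paper condenses your case split into the single bound $\bigl||a+b|^{2/D}-|a|^{2/D}\bigr|\lesssim|a|^{2/D-1}|b|+|b|^{2/D}$ plus the fundamental theorem of calculus; in your third-line argument, the comparable regime $|a|\sim|b|$ is covered by the crude bound $|b|^{1+2/D}\lesssim|a|^{2/D}|b|$, not by the swap.
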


\begin{proof}
	For \eqref{eq:HMHF}, the identities
	\begin{equation*}
		f'(a+b)-f'(a)=2\sin(2a+b)\sin b, \quad f(a+b)-f(a)-f(b)=2\sin(a+b)\sin a\sin b,
	\end{equation*}
	together with the boundedness of $f''$, prove the first three inequalities.
	
	For \eqref{eq:NLH}, the first three inequalities follow from
	\begin{equation*}
		||a+b|^{2/D}-|a|^{2/D}|\lesssim |a|^{2/D-1}|b|+|b|^{2/D}
	\end{equation*}
	and the fundamental theorem of calculus. In both models, the last inequality follows by induction from the third inequality, using $|\sin(\sum_{j=1}^Na_j)|\leq\sum_{j=1}^N|\sin a_j|$ for \eqref{eq:HMHF} and $|\sum_{j=1}^Na_j|^{2/D}\lesssim\sum_{j=1}^N|a_j|^{2/D}$ for \eqref{eq:NLH}.
\end{proof}

\section{Modulation analysis}
In this section, we establish the modulation estimates and use them to rule out bubble trees with $N\geq2$. Unless otherwise stated, we treat \eqref{eq:HMHF} and \eqref{eq:NLH} simultaneously in the unified notation of Section~\ref{sec:notation and pre}, with $D=1$ for \eqref{eq:HMHF} and $D\in\{\frac12,1,\frac32\}$ for \eqref{eq:NLH}. 

Let $u(t)$ denote the solution under consideration in the unified variables, and let $T_+\in(0,\infty]$ be its maximal forward lifespan. If $T_+<\infty$, define $u^*=r^{-1}v^*$ for \eqref{eq:HMHF}, while for \eqref{eq:NLH} let $u^*\in\dot H^1_d$ be the asymptotic profile supplied by Proposition~\ref{prop:sol resol nlh}. If $T_+=\infty$, define $u^*=0$ and interpret $T_+-t=\infty$. With these conventions, Propositions~\ref{prop:sol resol hmhf} and \ref{prop:sol resol nlh} yield an integer $N\geq1$ if $T_+<\infty$ and $N\geq0$ if $T_+=\infty$, signs $\iota_1,\ldots,\iota_N\in\{\pm1\}$, continuous positive scales $\wt\lambda_1(t),\ldots,\wt\lambda_N(t)$, and $\eps(t)\in\dot H^1_d$ such that
\begin{equation}\label{eq:bubble decomposition}
	u(t)=u^*+\sum_{j=1}^N\iota_jW_{\wt\lambda_j(t)}+\eps(t)
\end{equation}
with
\begin{equation}\label{eq:decomposition convergence}
	\|\eps(t)\|_{\dot H^1_d}
	+\sum_{j=2}^N\frac{\wt\lambda_j(t)}{\wt\lambda_{j-1}(t)}
	+\frac{\wt\lambda_1(t)}{\min\{\sqrt{T_+-t},\sqrt t\}}\to0
	\quad\text{as}\quad t\to T_+.
\end{equation}
The energy identities for the two models also imply
\begin{equation}\label{eq:dissipation}
	\int_0^{T_+}\|\calT(u(t))\|_{L^2_d}^2dt<\infty,
\end{equation}
where $\calT$ is defined in \eqref{eq:NL def}.
When $N=0$, the sum in \eqref{eq:bubble decomposition} is zero and all scale terms in \eqref{eq:decomposition convergence} are omitted. In the finite-time \eqref{eq:HMHF} case, $u^*=r^{-1}v^*$ need not belong to $\dot H^1_4$; only its cutoff near the origin will be measured in that norm.

To prove the main theorems, we assume for contradiction that $N\geq2$ and fix a sufficiently small $0<\eta \ll1$. Since the scales $\wt\lambda_1,\ldots,\wt\lambda_N$ in \eqref{eq:bubble decomposition} are only continuous, we use orthogonality conditions to fix nearby $C^1$ scales. For this purpose, fix $R_0>10$ and choose a radial function $Z\in C_c^\infty((0,\infty))$ satisfying
\begin{equation}\label{eq:Z conditions}
	(Z,\Lambda W)_d=1, \qquad \operatorname{supp}Z\subset[R_0^{-1},R_0], \qquad \int_0^\infty Z(r)rdr=0.
\end{equation}
In the global case, we use the conventions $r_0=\infty$, $\chi_{r_0}=1$, and $C_{r_0}=0$. For the following lemma, define
\begin{equation}\label{eq:delta}
	\delta(t)\coloneqq\max\{\|\chi_{2r_0}g(t)\|_{\dot H^1_d}^2,\mu_N(t)^D\}.
\end{equation}

\begin{lem}[Decomposition]\label{lem:modulation}
	Fix a sufficiently small $0<\eta\ll1$. Then there exist $r_0\in(0,\infty]$, $t_1^*\in(0,T_+)$, and $\lambda_1,\ldots,\lambda_N \in C^1$ with the following properties. 
	\begin{itemize}
		\item (Decomposition) The solution satisfies the decomposition
		\begin{equation}\label{eq:common decomposition}
			U=\sum_{j=1}^N\iota_jW_{\lambda_j}, \qquad g=u-U
		\end{equation}
		with the orthogonality condition
		\begin{equation}\label{eq:orthogonality}
			(Z_{\underline{\lambda_j}},g)_d=0, \qquad 1\leq j\leq N.
		\end{equation}
		
		\item (Scale decoupling) The scales satisfy 
		\begin{equation}\label{eq:modulation smallness}
			\max_{1\leq j\leq N}\bigg|\log\bigg(\frac{\lambda_j}{\wt\lambda_j}\bigg)\bigg|
			+\sum_{j=2}^N\mu_j(t)
			+\frac{\lambda_1(t)}{\min\{\sqrt{T_+-t},\sqrt t\}}\to0
			\quad\text{as}\quad t \to T_+.
		\end{equation}
		
		\item (Smallness of remainder) We have
		\begin{equation}\label{eq:delta bound}
			\delta(t)\leq\eta^4, \qquad t_1^*\leq t<T_+.
		\end{equation}
		If $u^*=0$, then $r_0=\infty$ and
		\begin{equation}\label{eq:delta smallness}
			\delta(t)\to0\quad\text{as}\quad t\to\infty.
		\end{equation}
		
		\item (First modulation estimate) For $t\in[t_1^*,T_+)$, we have
		\begin{equation}\label{eq:lambda t esti}
			\sum_{j=1}^N|\lambda_{j,t}(t)|\lesssim \|\calT(u(t))\|_{L^2_d}
		\end{equation}
	\end{itemize}
\end{lem}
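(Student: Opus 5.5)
The plan is the standard implicit function theorem construction of modulation parameters, followed by an estimate of their time derivatives through the equation $u_t=\calT(u)$.

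\emph{Step 1: choice of $r_0$.} In the finite-time case, choose $r_0\in(0,\infty)$ so small that $\|\chi_{4r_0}u^*\|_{\dot H^1_d}\le\eta^3$. This is possible since $u^*\in\dot H^1_d$ for \eqref{eq:NLH}. For \eqref{eq:HMHF}, we only need that the localized energy of $v^*$ near the origin tends to zero, which holds because $v^*\in\calE_{0,m^*}$ and $v^*(r)\to0$ as $r\to0$. If $u^*=0$, take $r_0=\infty$.

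\emph{Step 2: construction of $\lambda_j$.} Consider the map
\[
(\lambda_1,\dots,\lambda_N,u)\mapsto\big((Z_{\underline{\lambda_j}},u-\textstyle\sum_k\iota_kW_{\lambda_k})_d\big)_{j}.
\]
Its Jacobian in the variables $(\log\lambda_j)_j$ at $\lambda=\wt\lambda$ is nearly diagonal. The diagonal entries are $\iota_j(Z,\Lambda W)_d=\iota_j$, up to normalization, and the off-diagonal entries $(Z_{\underline{\lambda_j}},(\Lambda W)_{\lambda_k})_d$ are small by scale separation together with \eqref{eq:Lambda W pointwise}. For $k>j$ (smaller bubble), $\int_0^\infty Z(r)\,r\,dr=0$ cancels the leading term of the tail $(\Lambda W)_{\lambda_k}\sim\lambda_k^{D}r^{-2D}$ on $\operatorname{supp}Z_{\underline{\lambda_j}}$ in the $D=1$ case. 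In general, direct bounds give $O(\mu^{D})$.

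The inner products against $u^*+\eps$ are also small. The terms with $\eps$ are $O(\|\eps\|_{\dot H^1})$ by Hardy's inequality, since $Z$ is compactly supported away from $0$. The term with $u^*$ is small because $Z_{\underline{\lambda_j}}$ is supported at scales $\lambda_j\to0$, so only the localized part $\chi_{4r_0}u^*$, which is small, is seen. Here the \eqref{eq:HMHF} subtlety is handled by using $\lambda_j\ll r_0$.

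The quantitative implicit function theorem, uniform in scales by scaling invariance, then gives unique $\lambda_j(t)$ with $|\log(\lambda_j/\wt\lambda_j)|\to0$, and \eqref{eq:orthogonality} holds. Continuity of $u(t)$ in $\dot H^1$, and the smoothing for $t>0$ that makes $t\mapsto(Z_{\underline\lambda},u(t))_d$ a $C^1$ function, give $\lambda_j\in C^1$. Then \eqref{eq:modulation smallness} follows from \eqref{eq:decomposition convergence}.

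\emph{Step 3: smallness of $g$.} We have
\[
g=u^*+\eps+\textstyle\sum_j\iota_j(W_{\wt\lambda_j}-W_{\lambda_j}),
\]
and the last sum is $o(1)$ in $\dot H^1$. Hence $\|\chi_{2r_0}g\|_{\dot H^1_d}^2\le2\eta^6+o(1)$, and $\mu_N^D\to0$, which gives \eqref{eq:delta bound} for $t\ge t_1^*$. If $u^*=0$, then $\delta\to0$.

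\emph{Step 4: first modulation estimate.} Differentiate \eqref{eq:orthogonality} in time, using $u_t=\calT(u)$:
\[
\textstyle\sum_k\frac{\lambda_{k,t}}{\lambda_k}\Big[\iota_k(Z_{\underline{\lambda_j}},(\Lambda W)_{\lambda_k})_d+\delta_{jk}(\Lambda_{-1}Z)_{\underline{\lambda_j}}\text{-terms with }g\Big]=(Z_{\underline{\lambda_j}},\calT(u))_d .
\]
The matrix in brackets is a small perturbation of $\mathrm{diag}(\iota_j)$, by Step 2 and the smallness of $g$ paired against $(\Lambda_{-1}Z)_{\underline{\lambda_j}}$, which is handled as above. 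So it is invertible with bounded inverse. The right side is bounded by
\[
\|Z_{\underline{\lambda_j}}\|_{L^2_d}\|\calT(u)\|_{L^2_d}=\lambda_j^{-2+d/2-D}\|Z\|\,\|\calT(u)\|=\lambda_j\|Z\|_{L^2_d}\|\calT(u)\|_{L^2_d},
\]
using $d/2-D=1$. Therefore $|\lambda_{j,t}|\lesssim\|\calT(u)\|_{L^2_d}$, which is \eqref{eq:lambda t esti}.

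\emph{Main obstacle.} The main difficulty is the uniform control of the $u^*$ contribution for \eqref{eq:HMHF} in the finite-time case, where $u^*\notin\dot H^1_4$ globally. It is handled by the localization at $r_0$ together with $\lambda_j\ll r_0$. A secondary point is making the off-diagonal smallness uniform in the scales.
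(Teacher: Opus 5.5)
Your Steps 1--3 match the paper's construction: implicit function theorem in $\log\lambda_j$, a near-diagonal Jacobian, and localization of $u^*$ at $r_0$. The crude off-diagonal bound $O(\mu^D)$ is enough there. One minor fix: bounding the $u^*$ pairing by $\|\chi_{4r_0}u^*\|_{\dot H^1_d}\le\eta^3$ only gives $|\log(\lambda_j/\wt\lambda_j)|\lesssim\eta^3$, not convergence to $0$. To get $\to0$, use, as the paper does, $|(Z_{\underline{\wt\lambda_j}},\chi_{4r_0}u^*)_d|\lesssim\|r^{-1}u^*\|_{L^2_d(0,R_0\wt\lambda_j)}\to0$.

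The genuine gap is in Step 4.

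\emph{The right-hand side is larger than you claim.} Your own exponent $-2+d/2-D$ equals $-1$, so $\|Z_{\underline{\lambda_j}}\|_{L^2_d}=\lambda_j^{-1}\|Z\|_{L^2_d}$, not $\lambda_j\|Z\|_{L^2_d}$. Your system is in the unknowns $\lambda_{k,t}/\lambda_k$, with right-hand side of size $\lambda_j^{-1}\|\calT(u)\|_{L^2_d}$. A bounded inverse $B^{-1}$ therefore only gives $|\lambda_{k,t}|\lesssim\sum_j(\lambda_k/\lambda_j)|(B^{-1})_{kj}|\,\|\calT(u)\|_{L^2_d}$. The terms with $\lambda_j\ll\lambda_k$ carry the large factor $\lambda_k/\lambda_j$.

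\emph{The weighted system is what must be inverted.} You have to multiply the $k$-th equation by $\lambda_k$ and invert in the unknowns $\lambda_{j,t}$, as in \eqref{eq:modulation system}. The off-diagonal entries are then $\frac{\lambda_k}{\lambda_j}(Z_{\underline{\lambda_k}},\iota_j(\Lambda W)_{\lambda_j})_d$. With your bound $O((\lambda_j/\lambda_k)^D)$ for $\lambda_j\ll\lambda_k$, these entries have size $(\lambda_k/\lambda_j)^{1-D}$. That is harmless for $D=\frac32$ and only $O(1)$ for $D=1$. For $D=\frac12$ it is unbounded: with $N=2$, $d=3$ you would only get $|\lambda_{1,t}|\lesssim(\lambda_1/\lambda_2)^{1/2}\|\calT(u)\|_{L^2_d}$.

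\emph{The missing idea.} The cancellation from $\int_0^\infty Z r\,dr=0$ is not special to $D=1$. Since $r^{-2D}r^{d-1}=r$ when $d=2D+2$, for every $D$ we have $(Z_{\underline{\lambda_k}},\lambda_j^Dr^{-2D})_d=(\lambda_j/\lambda_k)^D\int_0^\infty Z(\rho)\rho\,d\rho=0$. The next term of \eqref{eq:resonance asymptotics} then gives the bound $(\lambda_j/\lambda_k)^{D+2}$ in \eqref{eq:almost orthogonality}. Consequently every weighted off-diagonal entry is $O\big((\min\{\lambda_j,\lambda_k\}/\max\{\lambda_j,\lambda_k\})^{D+1}\big)=o(1)$. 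The weighted matrix is a small perturbation of the identity, and \eqref{eq:lambda t esti} follows from $\lambda_k|(Z_{\underline{\lambda_k}},u_t)_d|\lesssim\|u_t\|_{L^2_d}$. This is exactly why the normalization $\int_0^\infty Z r\,dr=0$ is imposed in \eqref{eq:Z conditions}.
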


\begin{proof}
	We first choose $r_0$ so that $\|\chi_{8r_0}u^*\|_{\dot H^1_d}\ll\eta$. When $T_+=\infty$, since $u^*\equiv0$, our convention $r_0=\infty$ automatically satisfies this condition.
	
	Define a map 
	\begin{equation*}
		(\lambda_1,\lambda_2,\cdots,\lambda_N)\mapsto
		\mathbf F =(\mathbf F_1,\mathbf F_2,\cdots, \mathbf F_N),
	\end{equation*}
	where
	\begin{equation*}
		\mathbf F_{k} (\lambda_1,\lambda_2,\cdots,\lambda_N)\coloneqq(\iota_kZ_{\underline{\lambda_k}},u-(1-\chi_{4r_0})u^*-{\textstyle\sum_{j=1}^N} \iota_jW_{\lambda_j})_d.
	\end{equation*}
	At $(\wt\lambda_1,\ldots,\wt\lambda_N)$, we claim that $\mathbf F$ vanishes as $t\to T_+$. Indeed, \eqref{eq:bubble decomposition} gives
	\begin{equation*}
		\mathbf F_{k}=(\iota_kZ_{\underline{\wt\lambda_k}},\eps+\chi_{4r_0}u^*)_d.
	\end{equation*}
	For the $\eps$ part, we have
	\begin{equation*}
		|(Z_{\underline{\wt\lambda_k}},\eps)_d| \lesssim \|\eps\|_{\dot H^1_d} \to 0.
	\end{equation*}
	For the asymptotic profile term, if $T_+=\infty$, then $u^*=0$. Suppose that $T_+<\infty$. The support of $Z$ implies
	\begin{equation*}
		|(Z_{\underline{\wt\lambda_k}},\chi_{4r_0}u^*)_d|\lesssim \|r^{-1}u^*\|_{L^2_d(0,R_0\wt\lambda_k)}
		\to 0\quad\text{as}\quad t\to T_+.
	\end{equation*}
	The same conclusions hold with $Z$ replaced by $\Lambda_{-1}Z$.
	
	We next compute the Jacobian. For $1\leq j,k\leq N$,
	\begin{equation*}
		\begin{aligned}
			\lambda_j\partial_{\lambda_j}\mathbf F_k
			=&(\iota_kZ_{\underline{\lambda_k}},\iota_j(\Lambda W)_{\lambda_j})_d\\
			&-\delta_{jk}
			(
			\iota_k(\Lambda_{-1}Z)_{\underline{\lambda_k}},
			u-(1-\chi_{4r_0})u^*-{\textstyle\sum_{\ell=1}^N}\iota_{\ell}W_{\lambda_{\ell}}
			)_d.
		\end{aligned}
	\end{equation*}
	For the second term, the same argument as above shows that it converges to zero as $t\to T_+$.
	Directly from \eqref{eq:resonance asymptotics} and $\int_0^\infty Z(r)rdr=0$ in \eqref{eq:Z conditions},
	\begin{equation}\label{eq:almost orthogonality}
		|(Z_{\underline{\lambda_k}},(\Lambda W)_{\lambda_j})_d-\delta_{kj}|
		\lesssim
		\begin{cases}
			(\lambda_k/\lambda_j)^D,&j<k,\\
			0,&j=k,\\
			(\lambda_j/\lambda_k)^{D+2},&j>k.
		\end{cases}
	\end{equation}
	Note that the normalization in \eqref{eq:Z conditions} yields $(\iota_kZ_{\underline{\lambda_k}},\iota_k(\Lambda W)_{\lambda_k})_d=(Z,\Lambda W)_d=1$.
	Consequently, at $(\wt\lambda_1,\ldots,\wt\lambda_N)$, the Jacobian is the identity matrix plus an error matrix tending to zero as $t\to T_+$. The implicit function theorem yields unique $C^1$ scales $\lambda_1,\ldots,\lambda_N$ satisfying
	\begin{equation*}
		\max_{1\leq j\leq N}|\log({\lambda_j}/{\wt\lambda_j})|\to 0
		\quad\text{as}\quad t\to T_+.
	\end{equation*}
	For each $t_0$ sufficiently close to $T_+$, the implicit function theorem gives $C^1$ functions $\lambda_1^{t_0},\ldots,\lambda_N^{t_0}$ on an interval $I^{t_0}\ni t_0$. If $\tau\in I^{t_1}\cap I^{t_2}$, then $(\lambda_1^{t_1}(\tau),\ldots,\lambda_N^{t_1}(\tau))$ and $(\lambda_1^{t_2}(\tau),\ldots,\lambda_N^{t_2}(\tau))$ are zeros of $\mathbf F$ at time $\tau$ and are relatively $o(1)$-close to $(\wt\lambda_1(\tau),\ldots,\wt\lambda_N(\tau))$. The uniform uniqueness in the implicit function theorem implies
	\begin{equation*}
		\lambda_j^{t_1}(\tau)=\lambda_j^{t_2}(\tau), \qquad 1\leq j\leq N.
	\end{equation*}
	Hence $\lambda_j(\tau)\coloneqq\lambda_j^{t_0}(\tau)$ for $\tau\in I^{t_0}$ is well-defined and belongs to $C^1([t_1^*,T_+))$.
	
	After increasing $t_1^*$, we have $R_0\lambda_1<2r_0$. Hence $(1-\chi_{4r_0})u^*$ vanishes on every $\operatorname{supp}Z_{\underline{\lambda_k}}$, and the equations obtained from the implicit function theorem become
	\begin{equation*}
		(Z_{\underline{\lambda_k}},g)_d=0, \qquad 1\leq k\leq N.
	\end{equation*}
	The convergence $\lambda_j/\wt\lambda_j\to1$ and \eqref{eq:decomposition convergence} establish \eqref{eq:modulation smallness}. Using \eqref{eq:bubble decomposition}, we also have
	\begin{equation*}
		\|\chi_{2r_0}g\|_{\dot H^1_d}\lesssim\|\eps\|_{\dot H^1_d}+\|\chi_{4r_0}u^*\|_{\dot H^1_d}+{\textstyle\sum_{j=1}^N}\|W_{\wt\lambda_j}-W_{\lambda_j}\|_{\dot H^1_d}.
	\end{equation*}
	The first two terms are bounded by $\eta^4$ after the choice of $r_0$ and an increase of $t_1^*$, while the last sum tends to zero because $\lambda_j/\wt\lambda_j\to1$. Increasing $t_1^*$ again, we obtain
	\begin{equation*}
		\|\chi_{2r_0}g\|_{\dot H^1_d}^2+\mu_N^D\leq\eta^4.
	\end{equation*}
	The definition \eqref{eq:delta} now establishes \eqref{eq:delta bound}. If $u^*=0$, we may take $r_0=\infty$, and every term on the right-hand side above tends to zero. Hence, \eqref{eq:delta smallness} follows.
	
	It remains to prove \eqref{eq:lambda t esti}. Differentiating $k$-th condition in \eqref{eq:orthogonality}, and then multiplying by $\lambda_k$, we obtain
	\begin{equation}\label{eq:modulation system}
		\sum_{j=1}^N\left[\frac{\lambda_k}{\lambda_j}(Z_{\underline{\lambda_k}},\iota_j(\Lambda W)_{\lambda_j})_d-\delta_{kj}((\Lambda_{-1}Z)_{\underline{\lambda_k}},g)_d\right]\lambda_{j,t}=-\lambda_k(Z_{\underline{\lambda_k}},u_t)_d.
	\end{equation}
	For $j\neq k$, \eqref{eq:almost orthogonality} implies
	\begin{equation*}
		\left|\frac{\lambda_k}{\lambda_j}(Z_{\underline{\lambda_k}},\iota_j(\Lambda W)_{\lambda_j})_d\right|\lesssim\left(\frac{\min\{\lambda_j,\lambda_k\}}{\max\{\lambda_j,\lambda_k\}}\right)^{D+1}=o_{t\to T_+}(1).
	\end{equation*}
	For $j=k$, \eqref{eq:Z conditions}, the support of $Z_{\underline{\lambda_k}}$, and \eqref{eq:delta bound} yield
	\begin{equation*}
		\left|\frac{\lambda_k}{\lambda_k}(Z_{\underline{\lambda_k}},\iota_k(\Lambda W)_{\lambda_k})_d-((\Lambda_{-1}Z)_{\underline{\lambda_k}},g)_d-1\right|\lesssim\|\chi_{2r_0}g\|_{\dot H^1_d}\leq\eta^2.
	\end{equation*}
	Thus the coefficient matrix in \eqref{eq:modulation system} is uniformly invertible after choosing $\eta$ sufficiently small and increasing $t_1^*$. Finally,
	\begin{equation*}
		\lambda_k|(Z_{\underline{\lambda_k}},u_t)_d|\lesssim\|u_t\|_{L^2_d}.
	\end{equation*}
	Since $u_t=\calT(u)$, inversion of \eqref{eq:modulation system} on $\ell^1$ establishes \eqref{eq:lambda t esti}.
\end{proof}

The preceding lemma fixes the $C^1$ scales through the orthogonality conditions $(Z_{\underline{\lambda_j}},g)_d=0$. However, when we differentiate these conditions in time, we obtain only \eqref{eq:lambda t esti}, which is not precise enough for the argument below. A natural refinement is to use the scaling direction itself and introduce a correction based on the inner product $((\Lambda W)_{\underline{\lambda}},g)_d$. Our argument follows the idea in \cite{KimMerle2025CPAM,KimMerle2026arXiv}, which extends the one-bubble argument of \cite{CollotMerleRaphael2017CMP}.

In the present range, however, the slow decay of $\Lambda W$ makes the refinement above inapplicable without further modification. As explained in the strategy of the proof, we instead seek a law for the two innermost scales by comparing their scaling directions. After normalization, these directions have the same leading tail, which cancels in their difference. We define
\begin{equation}\label{eq:Theta N def}
	\Theta_N\coloneqq\iota_{N-1}(\lambda_N^{2-D}(\Lambda W)_{\underline{\lambda_N}}
	-\lambda_{N-1}^{2-D}(\Lambda W)_{\underline{\lambda_{N-1}}}).
\end{equation}
The cancellation gives $r\Theta_N\in L^2_d$, and we use $\Theta_N$ as the test function in the refined modulation estimate.

As explained in the strategy of the proof, the principal terms in $-(\Theta_N,U_t)_d$ involve $\lambda_{N,t}$ and $\lambda_{N-1,t}$. We choose $\calF_N$ so that $\partial_t\calF_N$ captures these terms. Recall that $\mu_N=\lambda_N/\lambda_{N-1}$. Define
\begin{equation}\label{eq:calF N def}
	\calF_N(t)\coloneqq\iota_N\iota_{N-1}\lambda_{N-1}^{2-D}\int_0^{\mu_N}(s^{-D}(\Lambda W)_s-\Lambda W,s^{-1}(\Lambda W)_s)_d ds.
\end{equation}
To account for the remainder $g=u-U$, we introduce the correction formed with the same test function:
\begin{equation}\label{eq:frak b def}
	\mathfrak b_N(t)\coloneqq(\Theta_N(t),g(t))_d.
\end{equation}
Since $\partial_t\mathfrak b_N=(\Theta_N,g_t)_d+(\partial_t\Theta_N,g)_d$, the $g_t$ contribution is incorporated into $\partial_t(\calF_N-\mathfrak b_N)$, while the second term is an error.

Using $u_t=\calT(u)$ and the stationarity of each bubble, the leading contribution comes from the interaction between the bubbles at $\lambda_{N-1}$ and $\lambda_N$; the contributions involving $g$, the larger scales, and the derivatives of the scaling directions are treated as errors. The coefficient of the leading interaction is the nonzero constant
\begin{equation}\label{eq:kappa def}
	\kappa\coloneqq-W(0)(\Lambda W,r^{-2}f'(Q))_d=
	\begin{cases}
		-8,&\text{for \eqref{eq:HMHF}},\\
		2^{2D+1}D^{D+2}(D+1)^D,&\text{for \eqref{eq:NLH}}.
	\end{cases}
\end{equation}
The following proposition gives the resulting refined modulation estimate and the size bound needed in the final argument.

\begin{prop}[Modulation estimates]\label{prop:modulation estimates}
	For almost every $t\in[t_1^*,T_+)$ sufficiently close to $T_+$,
	\begin{equation}\label{eq:F b bound}
		|\calF_N|+|\mathfrak b_N|
		\lesssim\lambda_{N-1}^{2-D},
	\end{equation}
	and
	\begin{equation}\label{eq:mod esti}
		\begin{aligned}
			\left|\partial_t(\calF_N-\mathfrak b_N)-\kappa\lambda_{N-1}^{-D}\right|
			\lesssim&
			(\delta^{1/4}+o_{t\to T_+}(1))\lambda_{N-1}^{-D}
			+\|\calT(u)\|_{L^2_d}^D\\
			&+\lambda_{N-1}^{1-D}\|\calT(u)\|_{L^2_d}
			+\lambda_{N-1}^{2-D}\|\calT(u)\|_{L^2_d}^2.
		\end{aligned}
	\end{equation}
\end{prop}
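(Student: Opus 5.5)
We start with the size bound \eqref{eq:F b bound}. For $\calF_N$, we use that $\mu_N\to0$. The integrand $(s^{-D}(\Lambda W)_s-\Lambda W,s^{-1}(\Lambda W)_s)_d$ should be integrable near $s=0$. After rescaling, $(\Lambda W,\Lambda W)$-type pairings with $r\Lambda W\notin L^2_d$ are avoided, because the difference $s^{-D}(\Lambda W)_s-\Lambda W$ cancels the leading tail $cr^{-2D}$. Pairing this against $s^{-1}(\Lambda W)_s$, which concentrates at scale $s$, should give a bound like $s^{D-1}$, which is integrable on $(0,\mu_N)$; hence $|\calF_N|\lesssim\lambda_{N-1}^{2-D}$. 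For $\mathfrak b_N$, by scaling $\Theta_N=\lambda_{N-1}^{2-D}\lambda_{N-1}^{-2}[\,\cdot\,]_{\lambda_{N-1}}$. Bounding $|(\Theta_N,g)_d|\le\|r\Theta_N\|_{L^2_d}\|r^{-1}g\|_{L^2_d}$ and using $\|r\Theta_N\|_{L^2_d}\lesssim\lambda_{N-1}^{2-D}$ (uniformly in $\mu_N$, from the tail cancellation and $D<2$, which makes $r\cdot r^{-2D-2}$ square integrable at infinity and controls the region near $\lambda_N$), together with Hardy \eqref{eq:Hardy Sobolev}, gives the bound. The support of $\Theta_N$ is effectively at scales $\lesssim\lambda_{N-1}\ll r_0$. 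The part of $\Theta_N$ beyond $r_0$ is handled using $|\Theta_N|\lesssim\lambda_{N-1}^{2}r^{-2D-2}$ there and the boundedness of $g$ in the relevant weighted norm.

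For the evolution law \eqref{eq:mod esti}, we differentiate:
\begin{equation*}
\partial_t\mathfrak b_N=(\Theta_N,u_t)_d-(\Theta_N,U_t)_d+(\partial_t\Theta_N,g)_d .
\end{equation*}
First we verify by direct computation that $-(\Theta_N,U_t)_d=\partial_t\calF_N+\text{error}$. Indeed, $U_t=-\sum_j\iota_j\lambda_j^{-1}\lambda_{j,t}(\Lambda W)_{\lambda_j}$. The $j=N,N-1$ terms, paired with $\Theta_N$, reproduce exactly the $\lambda_{N,t}$ and $\lambda_{N-1,t}$ derivatives of $\calF_N$; this is the design of $\calF_N$, which is checked by differentiating in $\mu_N$ and $\lambda_{N-1}$. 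The self-pairings of $(\Lambda W)_{\lambda_N}$ with itself cancel in the difference structure. The terms $j\le N-2$ give factors $(\lambda_{N-1}/\lambda_j)^{\text{power}}=o(1)$ times $\lambda_{N-1}^{1-D}|\lambda_{j,t}|$, and by \eqref{eq:lambda t esti} these are $\lesssim\lambda_{N-1}^{1-D}\|\calT(u)\|_{L^2_d}$.

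The term $(\partial_t\Theta_N,g)_d$ is bounded by $\lambda_{N-1}^{1-D}(|\lambda_{N,t}|+|\lambda_{N-1,t}|)$ times a Hardy-type norm of $g$, which again gives $\lambda_{N-1}^{1-D}\|\calT(u)\|_{L^2_d}$.

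The main term is $(\Theta_N,\calT(u))_d$. We expand using \eqref{eq:error equ}:
\begin{equation*}
\calT(u)=\calT(U)-H_Ug+\NL_U(g).
\end{equation*}
For $(\Theta_N,\calT(U))_d$, we use \eqref{eq:HMHF nonlinear esti}/\eqref{eq:NLH nonlinear esti}: $\calT(U)$ is the interaction, dominated near $r\sim\lambda_N$ by $r^{-2}f'(\iota_NQ_{[\lambda_N]})\iota_{N-1}W_{\lambda_{N-1}}(0)$. Pairing with $\Theta_N\approx\iota_{N-1}\lambda_N^{2-D}(\Lambda W)_{\underline{\lambda_N}}$ gives $\kappa\lambda_{N-1}^{-D}$ after scaling, with the sign matching the definition \eqref{eq:kappa def}. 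All other interactions are $o(1)\lambda_{N-1}^{-D}$; this uses $D<2$ and the decay \eqref{eq:profile pointwise}, \eqref{eq:Lambda W pointwise}.

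For $(\Theta_N,H_Ug)_d=(H_U\Theta_N,g)_d$, we use $H_{\lambda_N}(\Lambda W)_{\lambda_N}=0$ to reduce to potential differences times $g$, which are localized, and we bound them by $\delta^{1/4}\lambda_{N-1}^{-D}$ plus terms in $\|\calT(u)\|$.

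For the nonlinear term $\NL_U(g)$, we need the $\dot H^2$ size of $g$. We obtain it by writing $H_Ug\approx\calT(U)-\calT(u)$ and using the coercivity Lemma~\ref{lem:H coercivity}, localized near $\lambda_N$ via the orthogonality \eqref{eq:orthogonality}. This gives $\|g\|_{\dot H^2}$ locally $\lesssim\|\calT(u)\|+\lambda_{N-1}^{-D}\lambda_N^{D-1}$-type interaction. We then interpolate with \eqref{eq:weighted interpolation} and \eqref{eq:first order L4}; this produces the $\|\calT(u)\|^D$ and $\lambda_{N-1}^{2-D}\|\calT(u)\|^2$ terms via Young's inequality.

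The main obstacle, in my view, is this last step: controlling $(\Theta_N,\NL_U(g))_d$ and $(\Theta_N,H_Ug)_d$ with only the slowly decaying weights, especially for $D=\tfrac12$. There the nonlinearity $|z|^{4}z$ makes the bound rely on the $L^\infty$ estimate \eqref{eq:radial L inf}. I would first attempt a dyadic splitting of $\Theta_N$ into $r\lesssim\lambda_N$, $\lambda_N\lesssim r\lesssim\lambda_{N-1}$, and $r\gtrsim\lambda_{N-1}$, bounding each piece by the appropriate inequality.
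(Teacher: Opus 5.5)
Your bookkeeping matches the paper: the identity for $\partial_t(\calF_N-\mathfrak b_N)$, the bounds on $\calF_N$ and $\mathfrak b_N$, the extraction of $\kappa\lambda_{N-1}^{-D}$ from $r^{-2}f'(Q_{[\lambda_N]})\iota_{N-1}W_{\lambda_{N-1}}(0)$, and $(H_U\Theta_N,g)_d$ via Hardy. The gap is in the step you defer, the weighted control of $g$ needed for $(\Theta_N,\NL_U(g))_d$, which fails as proposed when $D=\frac12$.

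Since $|\NL_U(g)|\lesssim r^{D-2}g^2$ and $|\Theta_N|\sim r^{-2D}$ on $\lambda_N\le r\le\lambda_{N-1}$, you need $\int_0^{r_0}g^2r^{D-1}dr\lesssim\delta^{(2-D)/2}\|\calT(u)\|_{L^2_d}^D+(\text{small})\cdot\lambda_{N-1}^{-D}$. Localized coercivity gives, as you say, $\|g\|_{\dot H^2}\lesssim\|\calT(u)\|_{L^2_d}+\lambda_{N-1}^{-D}\lambda_N^{D-1}$ near $\lambda_N$. This cannot be improved for the unmodified $U$: the term $r^{-2}f'(Q_{[\lambda_N]})\iota_{N-1}W_{\lambda_{N-1}}(0)$ in $\calT(U)$ is exactly the one producing $\kappa$, and its $L^2_d$ norm is $\sim\lambda_N^{D-1}\lambda_{N-1}^{-D}$.

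For $D\in\{1,\frac32\}$ this is $\lesssim\lambda_{N-1}^{-1}$, and \eqref{eq:weighted interpolation} closes; that is the paper's route. For $D=\frac12$ it equals $\mu_N^{-1/2}\lambda_{N-1}^{-1}$, and interpolation only gives $\int g^2r^{-1/2}dr\lesssim\delta^{3/4}\|\calT(u)\|_{L^2_3}^{1/2}+\delta^{3/4}\mu_N^{-1/4}\lambda_{N-1}^{-1/2}$. Since $\delta\ge\mu_N^{1/2}$ and no upper bound on $\delta$ in terms of $\mu_N$ is available, $\delta^{3/4}\mu_N^{-1/4}$ need not be small or even bounded.

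Splitting $\Theta_N$ dyadically does not help, because the loss is in the control of $g$, not in the test function. The bound \eqref{eq:radial L inf} is used in the paper, but only to handle $\NL$ inside the coercivity argument.

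The missing idea is the modified profile of Lemma~\ref{lem:modified profile}:
\begin{itemize}
\item subtract the value $\sum_{j<N}\iota_j\lambda_j^{-D}$ of the outer bubbles at the origin, cut off at radius $4\delta\lambda_{N-1}$;
\item shift the inner scale to $\nu_N$ with $|\log(\nu_N/\lambda_N)|\lesssim\mu_N^D$ to restore orthogonality.
\end{itemize}
Then $U+\td U-\iota_NW_{\nu_N}=O(r^2\lambda_{N-1}^{-5/2})$ on $r<4\delta\lambda_{N-1}$, and $\|\calT(U+\td U)\|_{L^2_3}\lesssim\lambda_{N-1}^{-1}\delta^{-1/2}$. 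Coercivity of $H_{\nu_N}$ gives \eqref{eq:H2 esti for three dim}. Interpolation then yields $\int w^2r^{-1/2}dr\lesssim\delta^{3/4}\|\calT(u)\|_{L^2_3}^{1/2}+\delta^{1/2}\lambda_{N-1}^{-1/2}$, while $\td U$ itself costs only $\delta^{1/2}\lambda_{N-1}^{-1/2}$ in this weighted norm.

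A second, more easily fixed error: bounding $(\partial_t\Theta_N,g)_d$ by a Hardy norm of $g$ works for $D\le1$ but fails for $D=\frac32$. The $\lambda_{N,t}$-part of $\partial_t\Theta_N$ is $-\iota_{N-1}\lambda_{N,t}\lambda_N^{-1-D}((\Lambda+D)\Lambda W)_{\lambda_N}$, whose $\|r\,\cdot\,\|_{L^2_d}$ norm is $\sim|\lambda_{N,t}|\lambda_N^{1-D}$. Hardy therefore gives only $\delta^{1/2}\lambda_N^{-1/2}\|\calT(u)\|_{L^2_5}$. This is a factor $\mu_N^{-1/2}$ worse than the admissible $\lambda_{N-1}^{-1/2}\|\calT(u)\|_{L^2_5}$, and sharp if $g$ concentrates at scale $\lambda_N$. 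There is no control of $\int\lambda_N^{-1}dt$ to absorb it in the final integration.

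The paper instead uses $|\partial_t\Theta_N|\lesssim(|\lambda_{N,t}|+|\lambda_{N-1,t}|)r^{-2D-1}$ on $r\le\lambda_{N-1}$, with Cauchy--Schwarz against the same weighted norm $\int_0^{r_0}g^2r^{D-1}dr$ from Lemma~\ref{lem:nonlinear esti}. This produces $\delta^{(2-D)/4}\lambda_{N-1}^{1-D/2}\|\calT(u)\|_{L^2_d}^{1+D/2}$, which Young's inequality splits into $\|\calT(u)\|_{L^2_d}^D+\lambda_{N-1}^{2-D}\|\calT(u)\|_{L^2_d}^2$. This term, not the nonlinear one, is the source of the last error in \eqref{eq:mod esti}.

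Two minor points:
\begin{itemize}
\item $\partial_t\calF_N$ reproduces the $\lambda_{N-1,t}$ terms only up to $O(\lambda_{N-1}^{1-D}|\lambda_{N-1,t}|)$, which is admissible.
\item The $\calF_N$ integrand is $O(s^{-|1-D|})$, logarithmic at $D=1$, not $O(s^{D-1})$; the region $r\lesssim s$ contributes $s^{1-D}$. It is still integrable.
\end{itemize}
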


To control the nonlinear part, we need a weighted estimate for $g$. The following lemma provides the required estimate.

\begin{lem}[Weighted nonlinear estimate]\label{lem:nonlinear esti}
	The following estimate holds for every $t\in[t_1^*,T_+)$ sufficiently close to $T_+$.
	\begin{equation}\label{eq:weighted esti}
		\begin{aligned}
			\int_0^{r_0}g^2r^{D-1}dr\lesssim
			\delta^{(2-D)/2}\|\calT(u)\|_{L^2_d}^D
			+\delta^{1/4}\lambda_{N-1}^{-D}.
		\end{aligned}
	\end{equation}
\end{lem}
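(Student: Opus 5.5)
The plan is to reduce \eqref{eq:weighted esti} to the interpolation inequality \eqref{eq:weighted interpolation}, applied to a localized remainder $h\coloneqq\chi_{r_0}g$ (with $\chi_{r_0}=1$ in the global case). Since the weight $r^{D-1}$ is integrable near $0$, the left-hand side of \eqref{eq:weighted esti} is bounded by $\int_0^\infty h^2r^{D-1}dr$. Then \eqref{eq:weighted interpolation} gives
\begin{equation*}
	\int_0^{r_0}g^2r^{D-1}dr\lesssim\|h\|_{\dot H^1_d}^{2-D}\|h\|_{\dot H^2_d}^D\lesssim\delta^{(2-D)/2}\|h\|_{\dot H^2_d}^D,
\end{equation*}
using $\|\chi_{2r_0}g\|_{\dot H^1_d}^2\leq\delta$ from \eqref{eq:delta}. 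So the main task is an elliptic estimate for $\|h\|_{\dot H^2_d}$ in terms of $\|\calT(u)\|_{L^2_d}$, plus errors that, after multiplication by $\delta^{(2-D)/2}$, fit into $\delta^{1/4}\lambda_{N-1}^{-D}$. Since $D\le\frac32$, we have $(2-D)/2\ge\frac14$, which gives the required room.

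\textbf{Step 1: the elliptic equation.} Using \eqref{eq:error equ} with $\phi=U$, I write
\begin{equation*}
	H_Ug=\calT(U)-\calT(u)+\NL_U(g).
\end{equation*}
This is then localized by $\chi_{r_0}$; the commutator terms live on $r\sim r_0$, where all bubbles are negligible because $\lambda_1\ll r_0$, and they are bounded by $\|\chi_{2r_0}g\|_{\dot H^1_d}/r_0$.

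\textbf{Step 2: multi-bubble coercivity.} I would invert $H_U$ through a partition of unity adapted to the dyadic regions $\sqrt{\lambda_{j}\lambda_{j+1}}\lesssim r\lesssim\sqrt{\lambda_{j-1}\lambda_j}$. On each such region the potential $r^{-2}f'(P)$ is close to $r^{-2}f'(Q_{[\lambda_j]})$, so Lemma~\ref{lem:H coercivity} applies, using the orthogonality \eqref{eq:orthogonality} for the $j$-th scale. Gluing the pieces should give
\begin{equation*}
	\|h\|_{\dot H^2_d}\lesssim\|H_Ug\|_{L^2_d}+o(1)\|h\|_{\dot H^2_d}+\text{(cutoff errors)}.
\end{equation*}
The nonlinear term is handled with Lemma~\ref{lem:nonlinear estimates} together with \eqref{eq:radial Linfty}, \eqref{eq:first order L4} and \eqref{eq:radial L inf}. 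Each such bound gains a factor of $\|\chi_{2r_0}g\|_{\dot H^1_d}\le\delta^{1/2}\le\eta^2$, so it is absorbed into the left-hand side.

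\textbf{Step 3: the interaction term $\calT(U)$, which I expect to be the main obstacle.} Its leading part near the innermost scale is $r^{-2}f'(Q_{[\lambda_N]})\,\iota_{N-1}W_{\lambda_{N-1}}(0)$. This has $L^2_d$ size about $\mu_N^{D-1}\lambda_{N-1}^{-1}$, which for $D<1$ is too large to feed into the interpolation directly. I therefore do not put it into $\|h\|_{\dot H^2_d}$. Instead I split $g=g_{\rm int}+\tilde g$. Here $g_{\rm int}$ is an explicit profile that solves $H_{\lambda_N}g_{\rm int}\approx$ (the leading interaction), adjusted to satisfy the $Z_{\underline{\lambda_N}}$ orthogonality; it is of the form $\lambda_{N-1}^{-D}T_1(r/\lambda_N)$ with $T_1$ growing at most like $r^2$ (up to logarithms), cut off at $r\sim\sqrt{\lambda_N\lambda_{N-1}}$. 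Analogous profiles handle the other adjacent pairs, which give smaller contributions.

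\textbf{Step 4: conclusion.} The piece $g_{\rm int}$ is estimated directly by pointwise bounds. A computation should give
\begin{equation*}
	\int_0^\infty g_{\rm int}^2r^{D-1}dr\lesssim\mu_N^{c}\lambda_{N-1}^{-D}\quad\text{for some }c>0,
\end{equation*}
and $\mu_N^{c}\lesssim\delta^{1/4}$ once $c\ge D/4$, since $\mu_N^D\le\delta$ by \eqref{eq:delta}. The remainder $\tilde g$ satisfies Steps 1--2 with a residual interaction whose $L^2_d$ norm is $\lesssim\lambda_{N-1}^{-1}\mu_N^{c'}$ plus outer-interaction terms of order $\lambda_{N-1}^{-1}o(1)$. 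Feeding this into the interpolation bound and using Young's inequality gives
\begin{equation*}
	\delta^{(2-D)/2}\big(\|\calT(u)\|_{L^2_d}+\lambda_{N-1}^{-1}\big)^D\lesssim\delta^{(2-D)/2}\|\calT(u)\|_{L^2_d}^D+\delta^{1/4}\lambda_{N-1}^{-D}.
\end{equation*}
Adding the two pieces yields \eqref{eq:weighted esti}. If the profile construction in Step 3 proves too delicate, my fallback is to estimate $\int g^2r^{D-1}$ separately on $r\lesssim\lambda_N$, where Hardy's inequality and $\|g\|_{\dot H^1_d}$ give a factor $\lambda_N^{D}$, and on $r\gtrsim\lambda_N$, where the interaction has small $L^2_d$ norm and the interpolation above applies.
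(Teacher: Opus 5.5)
\textbf{Step 2 does not close, and it is not needed.} You want $\dot H^2_d$ control of $\chi_{r_0}g$ on the whole multi-bubble region, obtained by gluing single-bubble coercivity at the scales $\sqrt{\lambda_j\lambda_{j+1}}$. The cutoff errors you leave unquantified are too large.

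At $R_N=\sqrt{\lambda_N\lambda_{N-1}}$ the commutator contains $(\Delta_d\phi)g$, i.e.\ a term of size $R_N^{-2}\|g\|_{L^2_d(r\sim R_N)}$. For $d=3,4$ there is no Rellich inequality $\|r^{-2}h\|_{L^2_d}\lesssim\|\Delta_dh\|_{L^2_d}$ (a cut-off constant is a counterexample). So this term cannot be absorbed into $o(1)\|h\|_{\dot H^2_d}$, even with logarithmic cutoffs. It must be paid with the $\dot H^1$ norm via Hardy, giving $R_N^{-1}\|\chi_{2r_0}g\|_{\dot H^1_d}\le\mu_N^{-1/2}\lambda_{N-1}^{-1}\delta^{1/2}$.

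After interpolation this contributes $\delta\,\mu_N^{-D/2}\lambda_{N-1}^{-D}$. That is not $\lesssim\delta^{1/4}\lambda_{N-1}^{-D}$ once $\|\chi_{2r_0}g\|_{\dot H^1_d}^3\gg\mu_N^D$, and nothing relates these two quantities.

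The observation you are missing is that $\dot H^2$ information is only needed below scale $\lambda_{N-1}$. Since $r^{D-1}=r^{-D}\cdot r^{-2}r^{d-1}$, Hardy gives
\[
\int_{\delta_0\lambda_{N-1}}^{r_0}g^2r^{D-1}\,dr\le(\delta_0\lambda_{N-1})^{-D}\|r^{-1}g\|_{L^2_d(0,r_0)}^2\lesssim\delta\,\lambda_{N-1}^{-D}.
\]
The paper therefore applies \eqref{eq:weighted interpolation} only to $\chi_{\delta_0\lambda_{N-1}}g$. There the single condition $(Z_{\underline{\lambda_N}},g)_d=0$ and the coercivity of $H_{\lambda_N}$ suffice, and the commutator is only $O(\lambda_{N-1}^{-1}\delta^{1/2})$. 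For $D\in\{1,\frac32\}$, both $\calT(U)$ and $r^{-2}(f'(P)-f'(Q_{[\lambda_N]}))g$ are $O(\lambda_{N-1}^{-1})$ in $L^2_d$ on that region.

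\textbf{For $D=\frac12$ the corrector is cut off at the wrong radius.} Subtracting the response to the leading interaction is the right idea, and it is what the paper does. But $H_{\lambda_N}1=-r^{-2}f'(Q_{[\lambda_N]})$, so that response is $-c+\alpha(\Lambda W)_{\lambda_N}$ with $c=\sum_{j<N}\iota_j\lambda_j^{-1/2}\sim\lambda_{N-1}^{-1/2}$. Thus $T_1$ tends to a nonzero constant. Cutting it off at radius $R$ costs $cR^{1/2}$ in $\dot H^1_3$ and, through $c\,\Delta_3\chi_R$, $cR^{-1/2}$ in $L^2_3$.

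At $R=\sqrt{\lambda_N\lambda_{N-1}}$ the residual is therefore $\mu_N^{-1/4}\lambda_{N-1}^{-1}$, not $\lambda_{N-1}^{-1}\mu_N^{c'}$ with $c'\geq0$. Interpolation then gives $\delta^{3/4}\mu_N^{-1/8}\lambda_{N-1}^{-1/2}$, which exceeds $\delta^{1/4}\lambda_{N-1}^{-1/2}$ when $\mu_N\ll\delta^4$.

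The paper instead cuts at $4\delta\lambda_{N-1}$, the largest radius compatible with the $\dot H^1$ budget $\delta^{1/2}$. This balances the two costs, $\delta^{1/2}$ and $\lambda_{N-1}^{-1}\delta^{-1/2}$, and yields $\delta^{1/2}\lambda_{N-1}^{-1/2}$. It restores orthogonality by re-modulating $\lambda_N\to\nu_N$. It avoids gluing altogether by applying the coercivity of $H_{\nu_N}$ on all of $r<r_0/4$, treating $|U+\td U|^4-W_{\nu_N}^4$ as an $L^3_3$ perturbation of size $\lambda_{N-1}^{-1}$ against $\|w\|_{L^6_3}\lesssim\delta^{1/2}$.

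Your fallback fails as well. Near $0$ the weight $r^{D-1}=r^{-D}r^{d-3}$ is more singular than the Hardy weight, so $\|g\|_{\dot H^1_d}$ alone does not control $\int_0^{\lambda_N}g^2r^{D-1}\,dr$; take $g\sim r^{-D+\epsilon}$ with $0<\epsilon<D/2$.
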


The proof of Lemma~\ref{lem:nonlinear esti} is deferred to Section~\ref{sec:technical lem}.

\begin{proof}[Proof of Proposition~\ref{prop:modulation estimates} assuming Lemma~\ref{lem:nonlinear esti}] 
	Throughout the proof, we write $o(\cdot)$ in place of $o_{t\to T_+}(1)$.
	
	\textbf{Step 1.} We first organize $\partial_t(\calF_N-\mathfrak b_N)$. The functional $\calF_N$ is designed so that $-\partial_t\calF_N$ captures the main term of $(\Theta_N,U_t)_d$. Indeed,
	\begin{equation*}
		(\Theta_N,U_t)_d=-\partial_t\calF_N+\calR_{\calF},
	\end{equation*}
	where
	\begin{equation*}
		\begin{aligned}
			\calR_{\calF}\coloneqq&
			\lambda_{N-1}^{1-D}\lambda_{N-1,t}\bigg((2-D)\iota_N\iota_{N-1}{\int_0^{\mu_N}}(s^{-D}(\Lambda W)_s-\Lambda W,s^{-1}(\Lambda W)_s)_d ds\\
			&-\iota_N\iota_{N-1}\mu_N(\mu_N^{-D}(\Lambda W)_{\mu_N}-\Lambda W,\mu_N^{-1}(\Lambda W)_{\mu_N})_d\\
			&-(\mu_N^{-D}(\Lambda W)_{\mu_N}-\Lambda W,\Lambda W)_d\bigg)
			-\sum_{j=1}^{N-2}\lambda_{j,t}(\Theta_N,\lambda_j^{-1}\iota_j(\Lambda W)_{\lambda_j})_d.
		\end{aligned}
	\end{equation*}
	Since $u=U+g$ and $\mathfrak b_N=(\Theta_N,g)_d$, we have
	\begin{equation}\label{eq:F b derivative}
		\partial_t(\calF_N-\mathfrak b_N)=-(\Theta_N,u_t)_d+\calR_{\calF}-(\partial_t\Theta_N,g)_d.
	\end{equation}
	We next rewrite $-(\Theta_N,u_t)_d$ using the identity
	\begin{equation}\label{eq:error identity}
		u_t=\calT(u)=\calT(U)-H_Ug+\NL_U(g),
	\end{equation}
	where $H_U$ and $\NL_U$ are defined in \eqref{eq:NL def}.
	We insert this identity into \eqref{eq:F b derivative} and obtain
	\begin{equation*}
		\begin{aligned}
			\partial_t(\calF_N-\mathfrak b_N)-\kappa\lambda_{N-1}^{-D}
			=\calR\coloneqq&\calR_{\calF}
			+(\Theta_N,-\calT(U))_d-\kappa\lambda_{N-1}^{-D}\\
			&+(H_U\Theta_N,g)_d-(\Theta_N,\NL_U(g))_d-(\partial_t\Theta_N,g)_d.
		\end{aligned}
	\end{equation*}
	
	\textbf{Step 2. Estimate of $\calF_N$ and $\mathfrak b_N$.} For $0<\mu<1$, after cancellation of the leading term of $\mu^{-D}(\Lambda W)_\mu-\Lambda W$, we obtain
	\begin{equation}\label{eq:Lambda W diff pointwise}
		\begin{gathered}
			|\mu^{-D}(\Lambda W)_\mu(r)-\Lambda W(r)|\lesssim
			\begin{cases}
				\mu^{-2D},&0<r\leq\mu,\\
				r^{-2D},&\mu\leq r\leq1,\\
				r^{-2D-2},&1\leq r,
			\end{cases}\\
		\end{gathered}
	\end{equation}
	Using this, we obtain
	\begin{equation}\label{eq:calF integrand esti}
		|(s^{-D}(\Lambda W)_s-\Lambda W,s^{-1}(\Lambda W)_s)_d|\lesssim s^{-1/2}.
	\end{equation}
	Thus, we have
	\begin{equation*}
		|\calF_N|\leq\lambda_{N-1}^{2-D}|{\textstyle\int_0^{\mu_N}}(s^{-D}(\Lambda W)_s-\Lambda W,s^{-1}(\Lambda W)_s)_d ds|\lesssim\lambda_{N-1}^{2-D}.
	\end{equation*}
	
	For $\mathfrak b_N$, we decompose it into the contributions on $(0,r_0)$ and $(r_0,\infty)$:
	\begin{equation*}
		\begin{aligned}
			|\mathfrak b_N|
			&\leq\|r^{-1}g\|_{L^2_d(0,r_0)}\|r\Theta_N\|_{L^2_d}
			+|(\Theta_N,\chf_{{r\geq r_0}}g)_d|.
		\end{aligned}
	\end{equation*}
	Here, we have $\|r^{-1}g\|_{L^2_d(0,r_0)}\lesssim \delta^{1/2}$ by \eqref{eq:Hardy Sobolev}.
	After the change of variables $r=\lambda_{N-1}\rho$, the estimate \eqref{eq:Lambda W diff pointwise} with $\mu=\mu_N$ gives
	\begin{equation}\label{eq:Theta pointwise}
		|\Theta_N(r)|\lesssim
		\begin{cases}
			\lambda_N^{-2D},&0<r\leq\lambda_N,\\
			r^{-2D},&\lambda_N\leq r\leq\lambda_{N-1},\\
			\lambda_{N-1}^2r^{-2D-2},&\lambda_{N-1}\leq r.
		\end{cases}
	\end{equation}
	From this, we have $\|r\Theta_N\|_{L^2_d}\lesssim\lambda_{N-1}^{2-D}$. Thus, we have
	\begin{equation}
		\|r^{-1}g\|_{L^2_d(0,r_0)}\|r\Theta_N\|_{L^2_d}\lesssim \delta^{1/2}\lambda_{N-1}^{2-D}.
	\end{equation}
	Using \eqref{eq:radial Linfty}, we get
	\begin{equation}\label{eq:rD g L inf esti}
		\|r^Dg\|_{L^\infty}\lesssim 
		\begin{cases}
			\|v\|_{L^\infty}+\|\sum_j \iota_jQ_{[\lambda_j]}\|_{L^\infty},&\text{for \eqref{eq:HMHF}},\\
			\|g\|_{\dot H^1_d}\leq \|u\|_{\dot H^1_d}+\|U\|_{\dot H^1_d},&\text{for \eqref{eq:NLH}},
		\end{cases}
		\lesssim 1.
	\end{equation}
	Note that, in the \eqref{eq:NLH} case, we used the assumption $\sup_t\|u\|_{\dot H^1_d}<\infty$. By \eqref{eq:Theta pointwise} and \eqref{eq:rD g L inf esti}, we obtain
	\begin{align*}
		|(\Theta_N,\chf_{{r\geq r_0}}g)_d|\lesssim C_{r_0}\lambda_{N-1}^2.
	\end{align*}
	Moreover, when $T_+=\infty$, we have $r_0=\infty$, and hence this term vanishes. When $T_+<\infty$, since $\lambda_{N-1}=o(\sqrt{T_+-t})$, the term $C_{r_0}\lambda_{N-1}^2$ is negligible compared with $\lambda_{N-1}^{2-D}$. Thus, we conclude
	\begin{equation}\label{eq:frak b esti}
		|\mathfrak b_N|\lesssim \lambda_{N-1}^{2-D}+C_{r_0}\lambda_{N-1}^2\lesssim \lambda_{N-1}^{2-D},
	\end{equation}
	which proves \eqref{eq:F b bound}.

	\textbf{Step 3. Estimate of $\calR_{\calF}$.} We first estimate the summation part for $1\leq j\leq N-2$. For such $j$, by \eqref{eq:Lambda W pointwise} and \eqref{eq:Theta pointwise}, we get
	\begin{equation}\label{eq:j ThetaN estimate}
		|(\Theta_N,\lambda_j^{-1}\iota_j(\Lambda W)_{\lambda_j})_d|
		\lesssim\lambda_{N-1}^{1-D}(\tfrac{\lambda_{N-1}}{\lambda_j})^{D+1}
		(1+\log\tfrac{\lambda_j}{\lambda_{N-1}})\lesssim \lambda_{N-1}^{1-D}.
	\end{equation}
	Moreover, we can estimate the first term of $\calR_\calF$ by using \eqref{eq:calF integrand esti}. Therefore, we deduce
	\begin{equation}\label{eq:R lambda bound}
		|\calR_{\calF}|\lesssim\lambda_{N-1}^{1-D}{\textstyle\sum_{j=1}^N}|\lambda_{j,t}|
		\lesssim\lambda_{N-1}^{1-D}\|\calT(u)\|_{L^2_d},
	\end{equation}
	where the last inequality follows from \eqref{eq:lambda t esti}.
	
	\textbf{Step 4. Estimate of $(\Theta_N,-\calT(U))_d$.} Recalling $\kappa$ from \eqref{eq:kappa def}, we prove
	\begin{equation}\label{eq:interaction main}
		(\Theta_N,-\calT(U))_d-\kappa\lambda_{N-1}^{-D}
		=o(\lambda_{N-1}^{-D}).
	\end{equation}
	The main term is determined by $\calT(\iota_{N-1}W_{\lambda_{N-1}}+\iota_NW_{\lambda_N})$, and the remainder becomes an error.
	We first estimate the error terms. Since each $\iota_jW_{\lambda_j}$ is stationary,
	\begin{align*}
		&\calT(U)-\calT(\iota_{N-1}W_{\lambda_{N-1}}+\iota_NW_{\lambda_N})
		\\
		&=\frac1{r^{D+2}}\sum_{j=1}^{N-2}\bigg[f\bigg(\sum_{k=j}^N\iota_kQ_{[\lambda_k]}\bigg)-f(\iota_jQ_{[\lambda_j]})
		-f\bigg(\sum_{k=j+1}^N\iota_kQ_{[\lambda_k]}\bigg)\bigg].
	\end{align*}
	From \eqref{eq:HMHF nonlinear esti}, \eqref{eq:NLH nonlinear esti}, \eqref{eq:profile pointwise}, and \eqref{eq:Lambda W pointwise}, for each $j<k$, we get
	\begin{align*}
		|\calT(U)-\calT(\iota_{N-1}W_{\lambda_{N-1}}+\iota_NW_{\lambda_N})|\lesssim &\chf_{\{r\leq\lambda_k\}}(\lambda_j^{-2}\lambda_k^{-D}+\lambda_j^{-D}\lambda_k^{-2})\\
		&+\chf_{\{\lambda_k<r\leq\lambda_j\}}(\lambda_j^{-2}\lambda_k^Dr^{-2D}+\lambda_j^{-D}\lambda_k^2r^{-4})\\
		&+\chf_{\{r>\lambda_j\}}(\lambda_j^2\lambda_k^D+\lambda_j^D\lambda_k^2)r^{-2D-4}.
	\end{align*}
	By using $|\Theta_N(r)|\lesssim r^{-2D}$ which comes from \eqref{eq:Theta pointwise}, we have
	\begin{equation}\label{eq:outer profile interaction}
		|(\Theta_N,-\calT(U)+\calT(\iota_{N-1}W_{\lambda_{N-1}}+\iota_NW_{\lambda_N}))_d|
		\lesssim {\textstyle\sum_{j=1}^{N-2}}\lambda_j^{-D}
		\lesssim  o(\lambda_{N-1}^{-D}).
	\end{equation}
	When $N=2$, the left-hand side is zero.

    Now, we consider the main contribution. We obtain
    \begin{align*}
    	\calT(\iota_{N-1}W_{\lambda_{N-1}}+\iota_NW_{\lambda_N})=r^{-D-2}f'(\iota_NQ_{[\lambda_N]})\iota_{N-1}Q_{[\lambda_{N-1}]}+\calT_R,
    \end{align*}
    where
    \begin{align*}
    	\calT_R
    	=&r^{-D-2}(f(\iota_NQ_{[\lambda_N]}+\iota_{N-1}Q_{[\lambda_{N-1}]})-f(\iota_NQ_{[\lambda_N]})\\
    	&-f'(\iota_NQ_{[\lambda_N]})\iota_{N-1}Q_{[\lambda_{N-1}]}-f(\iota_{N-1}Q_{[\lambda_{N-1}]})).
    \end{align*}
    Since $f'$ is even, $f'(\iota_NQ_{[\lambda_N]})=f'(Q_{[\lambda_N]})$.
    Using the second and third inequalities of \eqref{eq:HMHF nonlinear esti} and \eqref{eq:NLH nonlinear esti}, together with \eqref{eq:profile pointwise} and \eqref{eq:Lambda W pointwise}, we get
    \begin{align*}
    	|\calT_R|\lesssim
    	\begin{cases}
    		\lambda_{N-1}^{-2D}r^{D-2},&0<r\leq\lambda_N,\\
    		\min\{\lambda_{N-1}^{-2D}r^{D-2},
    		\lambda_N^2\lambda_{N-1}^{-D}r^{-4}
    		+\lambda_N^D\lambda_{N-1}^{-2}r^{-2D}\},&\lambda_N<r\leq\lambda_{N-1},\\
    		\lambda_N^D\lambda_{N-1}^{2-D}r^{-D-4},&\lambda_{N-1}<r,
    	\end{cases}
    \end{align*}
    Combining this bound with \eqref{eq:Theta pointwise}, we obtain
    \begin{align*}
    	|(\Theta_N,\calT_R)_d|\lesssim
    	\mu_N^{D/2}\lambda_{N-1}^{-D}=o(\lambda_{N-1}^{-D}).
    \end{align*}
    
    For the remainder, the main contribution comes from the $\lambda_N^{-D}\iota_{N-1}(\Lambda W)_{\lambda_N}$ term in $\Theta_N$ defined by \eqref{eq:Theta N def}. We first show that the contribution of the $\iota_{N-1}\lambda_{N-1}^{-D}(\Lambda W)_{\lambda_{N-1}}$ term is an error. From \eqref{eq:profile pointwise} and \eqref{eq:Lambda W pointwise}, we get
    \begin{equation*}
    	|r^{-D-2}f'(Q_{[\lambda_N]})\iota_{N-1}Q_{[\lambda_{N-1}]}|\lesssim
    	\begin{cases}
    		\lambda_N^{-2}\lambda_{N-1}^{-D},&0<r\leq\lambda_N,\\
    		\lambda_N^2\lambda_{N-1}^{-D}r^{-4},&\lambda_N<r\leq\lambda_{N-1},\\
    		\lambda_N^2r^{-D-4},&\lambda_{N-1}<r.
    	\end{cases}
    \end{equation*}
    Hence, by this estimate and \eqref{eq:Lambda W pointwise},
    \begin{align*}
    	&|(\iota_{N-1}\lambda_{N-1}^{-D}(\Lambda W)_{\lambda_{N-1}},
    	r^{-D-2}f'(Q_{[\lambda_N]})\iota_{N-1}Q_{[\lambda_{N-1}]})_d|
    	\lesssim \mu_N^D\lambda_{N-1}^{-D}
    	=o(\lambda_{N-1}^{-D}).
    \end{align*}
    It remains to compute the contribution of the $\lambda_N^{-D}\iota_{N-1}(\Lambda W)_{\lambda_N}$ term.
    Since $Q_{[\lambda_{N-1}]}=r^DW_{\lambda_{N-1}}$, we have
    \begin{align*}
    	&-(\lambda_N^{-D}\iota_{N-1}(\Lambda W)_{\lambda_N},
    	r^{-D-2}f'(Q_{[\lambda_N]})\iota_{N-1}Q_{[\lambda_{N-1}]})_d
    	\\
    	&=\kappa\lambda_{N-1}^{-D}
    	-(\lambda_N^{-D}(\Lambda W)_{\lambda_N},r^{-2}f'(Q_{[\lambda_N]})[W_{\lambda_{N-1}}-W_{\lambda_{N-1}}(0)])_d.
    \end{align*}
    The explicit formulas for $W$ imply
    \begin{equation*}
    	|W_{\lambda_{N-1}}(r)-W_{\lambda_{N-1}}(0)|
    	\lesssim\lambda_{N-1}^{-D}\min\{r^2\lambda_{N-1}^{-2},1\}.
    \end{equation*}
    Moreover, \eqref{eq:one bubble potential explicit} and \eqref{eq:Lambda W pointwise} imply
    \begin{equation*}
    	|f'(Q_{[\lambda_N]})|
    	\lesssim
    	\begin{cases}
    		r^2\lambda_N^{-2},&0<r\leq\lambda_N,\\
    		\lambda_N^2r^{-2},&\lambda_N\leq r,
    	\end{cases}
    	\quad
    	\lambda_N^{-D}|(\Lambda W)_{\lambda_N}|
    	\lesssim
    	\begin{cases}
    		\lambda_N^{-2D},&0<r\leq\lambda_N,\\
    		r^{-2D},&\lambda_N\leq r.
    	\end{cases}
    \end{equation*}
    Therefore
    \begin{equation*}
    	|(\lambda_N^{-D}(\Lambda W)_{\lambda_N},
    	r^{-2}f'(Q_{[\lambda_N]})[W_{\lambda_{N-1}}-W_{\lambda_{N-1}}(0)])_d|
    	=o(\lambda_{N-1}^{-D}).
    \end{equation*}
    Combining the above estimates, we arrive at
    \begin{equation*}
    	(\Theta_N,-\calT(\iota_{N-1}W_{\lambda_{N-1}}+\iota_NW_{\lambda_N}))_d
    	=\kappa\lambda_{N-1}^{-D}
    	+o(\lambda_{N-1}^{-D}),
    \end{equation*}
    which concludes \eqref{eq:interaction main}.

	\textbf{Step 5. Estimates of $(H_U\Theta_N,g)_d$.} We prove
	\begin{align}
		|(H_U\Theta_N,g)_d|&\lesssim(\delta^{1/2}+o(1))\lambda_{N-1}^{-D}.\label{eq:g HUTheta estimate}
	\end{align}
	Indeed, we have
	\begin{align*}
		|(H_U\Theta_N,g)_d|&\lesssim \delta^{1/2}\|rH_U\Theta_N\|_{L^2_d}
		+(H_U\Theta_N,\chf_{r>r_0}g)_d.
	\end{align*}
	Thus, proving \eqref{eq:g HUTheta estimate} reduces to showing
	\begin{align}
		\|rH_U\Theta_N\|_{L^2_d}&\lesssim \lambda_{N-1}^{-D}\label{eq:HU ThetaN}
		\\
		(H_U\Theta_N,\chf_{r>r_0}g)_d&\leq C_{r_0}\lambda_{N-1}^2=o(\lambda_{N-1}^{-D}). \label{eq:exterior g HU ThetaN}
	\end{align}
	Using the definition of $H_U$ and $H\Lambda W=0$, we write
	\begin{align*}
		H_U\Theta_N
		=&\iota_{N-1}r^{-2}[f'(Q_{[\lambda_N]})-f'(\iota_{N-1}Q_{[\lambda_{N-1}]}+\iota_NQ_{[\lambda_N]})]
		\lambda_N^{-D}(\Lambda W)_{\lambda_N}\\
		&-\iota_{N-1}r^{-2}[f'(Q_{[\lambda_{N-1}]})-f'(\iota_{N-1}Q_{[\lambda_{N-1}]}+\iota_NQ_{[\lambda_N]})]
		\lambda_{N-1}^{-D}(\Lambda W)_{\lambda_{N-1}}\\
		&+r^{-2}[f'(\iota_{N-1}Q_{[\lambda_{N-1}]}+\iota_NQ_{[\lambda_N]})-f'(P)]\Theta_N,
	\end{align*}
	where $P=\sum_{j=1}^{N}\iota_jQ_{[\lambda_j]}$.
    Using the first inequalities in \eqref{eq:HMHF nonlinear esti} and \eqref{eq:NLH nonlinear esti}, together with \eqref{eq:profile pointwise}, \eqref{eq:Lambda W pointwise}, and $\sin Q_{[\lambda]}=r(\Lambda W)_\lambda$ for \eqref{eq:HMHF}, we obtain, in both models,
    \begin{align}
    	&r^{-2}|[f'(\iota_{N-1}Q_{[\lambda_{N-1}]}+\iota_NQ_{[\lambda_N]})-f'(Q_{[\lambda_N]})]
    	\lambda_N^{-D}(\Lambda W)_{\lambda_N}|\nonumber\\
    	&+r^{-2}|[f'(\iota_{N-1}Q_{[\lambda_{N-1}]}+\iota_NQ_{[\lambda_N]})-f'(Q_{[\lambda_{N-1}]})]
    	\lambda_{N-1}^{-D}(\Lambda W)_{\lambda_{N-1}}| \label{eq:HU ThetaN 1}\\
    	&\lesssim
    	\begin{cases}
    		\lambda_N^{-D-2}\lambda_{N-1}^{-D},&0<r\leq\lambda_N,\\
    		\lambda_N^{2-D}\lambda_{N-1}^{-D}r^{-4}
    		+\lambda_{N-1}^{-2}r^{-2D},&\lambda_N<r\leq\lambda_{N-1},\\
    		\lambda_{N-1}^2r^{-2D-4},&\lambda_{N-1}<r.
    	\end{cases} \nonumber
    \end{align}
    For the last term, we have
    \begin{equation}\label{eq:f prime P expansion}
    	f'(P)-f'(\iota_{N-1}Q_{[\lambda_{N-1}]}+\iota_NQ_{[\lambda_N]})
    	=\sum_{j=1}^{N-2}\bigg[
    	f'\bigg(\sum_{k=j}^N\iota_kQ_{[\lambda_k]}\bigg)
    	-f'\bigg(\sum_{k=j+1}^N\iota_kQ_{[\lambda_k]}\bigg)\bigg].
    \end{equation}
    If $N=2$, then $P=\iota_{N-1}Q_{[\lambda_{N-1}]}+\iota_NQ_{[\lambda_N]}$, so the last term vanishes. Suppose that $N\geq3$. Again, using \eqref{eq:f prime P expansion}, the first inequalities in \eqref{eq:HMHF nonlinear esti} and \eqref{eq:NLH nonlinear esti}, \eqref{eq:profile pointwise}, \eqref{eq:Lambda W pointwise}, \eqref{eq:Theta pointwise}, and $\sin Q_{[\lambda]}=r(\Lambda W)_\lambda$ for \eqref{eq:HMHF}, we obtain directly
    \begin{equation}\label{eq:HU ThetaN 2}
    	\begin{aligned}
    		&r^{-2}|[f'(\iota_{N-1}Q_{[\lambda_{N-1}]}+\iota_NQ_{[\lambda_N]})-f'(P)]\Theta_N|
    		\\
    		&\lesssim 
    		\begin{cases}
    			\lambda_N^{-D-2}\lambda_{N-2}^{-D},&0<r\leq\lambda_N,\\
    			\lambda_N^{2-D}\lambda_{N-2}^{-D}r^{-4}
    			+\lambda_{N-1}^{D-2}\lambda_{N-2}^{-D}r^{-2D},&\lambda_N<r\leq\lambda_{N-1},\\
    			\lambda_{N-1}^2r^{-2D-4},&\lambda_{N-1}<r.
    		\end{cases}
    	\end{aligned}
    \end{equation}
    Therefore, from \eqref{eq:HU ThetaN 1} and \eqref{eq:HU ThetaN 2}, we obtain \eqref{eq:HU ThetaN}.
    
    Next, if $r_0=\infty$, there is nothing to prove for \eqref{eq:exterior g HU ThetaN}; otherwise, by \eqref{eq:rD g L inf esti}, \eqref{eq:HU ThetaN 1}, and \eqref{eq:HU ThetaN 2} with $r_0>\lambda_{N-1}$, we derive \eqref{eq:exterior g HU ThetaN}. Thus, we conclude \eqref{eq:g HUTheta estimate}.
	
	\textbf{Step 6. Estimate of $(\Theta_N,\NL_U(g))_d$.} By \eqref{eq:HMHF nonlinear esti} and \eqref{eq:NLH nonlinear esti} with $|P|\lesssim  1$, we have $|\NL_U(g)|\lesssim r^{D-2}g^2$. Moreover, \eqref{eq:Theta pointwise} implies $|\Theta_N|\lesssim r^{-2D}$. Thus, from \eqref{eq:rD g L inf esti} for $r>r_0$ and \eqref{eq:weighted esti} for $r<r_0$, we have
	\begin{equation}\label{eq:nonlinear bound}
		\begin{aligned}
			|(\Theta_N,\NL_U(g))_d|\lesssim&{\textstyle\int_0^{r_0}}g^2r^{D-1}dr+C_{r_0}\lambda_{N-1}^2
			\\
			\lesssim&\delta^{(2-D)/2}\|\calT(u)\|_{L^2_d}^D
			+(\delta^{1/4}+o(1))\lambda_{N-1}^{-D}.
		\end{aligned}
	\end{equation}
	Here, as in \eqref{eq:frak b esti}, the term $C_{r_0}\lambda_{N-1}^2$ vanishes when $T_+=\infty$ and is absorbed into $o(1)\lambda_{N-1}^{-D}$ otherwise.
    
	\textbf{Step 7. Estimate of $(\partial_t\Theta_N,g)_d$.} First, note that
	\begin{align*}
		|\partial_t\Theta_N|\lesssim
		(|\lambda_{N,t}|+|\lambda_{N-1,t}|)\times
		\begin{cases}
			r^{-2D-1},&0<r\leq\lambda_{N-1},\\
			\lambda_{N-1}r^{-2D-2},&\lambda_{N-1}<r.
		\end{cases}
	\end{align*}
	Since $\lambda_{N-1}<r_0$, from the above estimate and \eqref{eq:rD g L inf esti}, we obtain
	\begin{equation*}
		\begin{aligned}
			|(\partial_t\Theta_N,g)_d|
			\lesssim&
			\bigl[({\textstyle\int_0^{r_0}}g^2r^{D-1}dr)^{1/2}
			\lambda_{N-1}^{1-D/2}+C_{r_0}\lambda_{N-1}\bigr]
			(|\lambda_{N,t}|+|\lambda_{N-1,t}|).
		\end{aligned}
	\end{equation*}
	Here, when $T_+=\infty$, the term $C_{r_0}\lambda_{N-1}$ is absent.
	From \eqref{eq:weighted esti} and \eqref{eq:lambda t esti}, we derive
	\begin{equation}\label{eq:g Theta t}
		\begin{aligned}
			|(\partial_t\Theta_N,g)_d|
			\lesssim&
			\delta^{(2-D)/4}\lambda_{N-1}^{1-D/2}\|\calT(u)\|_{L^2_d}^{1+D/2}
			+(\delta^{1/8}+o(1))\lambda_{N-1}^{1-D}\|\calT(u)\|_{L^2_d}
			\\
			\lesssim &
			\|\calT(u)\|_{L^2_d}^D+\lambda_{N-1}^{2-D}\|\calT(u)\|_{L^2_d}^2+\lambda_{N-1}^{1-D}\|\calT(u)\|_{L^2_d}.
		\end{aligned}
	\end{equation}

	\textbf{Step 8. Completion of the proof.} The error $\calR$ obtained in Step 1 consists of the five terms estimated in \eqref{eq:R lambda bound}, \eqref{eq:interaction main}, \eqref{eq:g HUTheta estimate}, \eqref{eq:nonlinear bound}, and \eqref{eq:g Theta t}. Thus, we have
	\begin{equation*}
		|\calR|\lesssim
		(\delta^{1/4}+o(1))\lambda_{N-1}^{-D}
		+\|\calT(u)\|_{L^2_d}^D
		+\lambda_{N-1}^{2-D}\|\calT(u)\|_{L^2_d}^2+\lambda_{N-1}^{1-D}\|\calT(u)\|_{L^2_d}.
	\end{equation*}
    As in \eqref{eq:frak b esti}, the last term vanishes when $T_+=\infty$ and is negligible otherwise. Hence, we conclude \eqref{eq:mod esti}.
\end{proof}

Now, we finish the proof.

\begin{proof}[Proof of Theorems~\ref{thm:HMHF main} and \ref{thm:NLH main}]
	Suppose that the decomposition contains $N\geq2$ bubbles, and apply Lemma~\ref{lem:modulation}.
	
	We first verify that, when $T_+<\infty$,
	\begin{equation}\label{eq:lambda minus D bdd goal}
		{\textstyle\int_t^{T_+}}\lambda_{N-1}^{-D}ds<\infty.
	\end{equation}
	Recall that the constant $\eta$ is fixed in Lemma~\ref{lem:modulation}, and it satisfies $\eta \ll|\kappa|$. By integrating \eqref{eq:mod esti}, we obtain
	\begin{equation}\label{eq:lambda minus D bdd}
		\begin{aligned}
			(|\kappa|-C\eta){\textstyle\int_t^s}\lambda_{N-1}^{-D}d\sigma
			\lesssim_{\eta}&|\calF_N(s)-\mathfrak b_N(s)|+|\calF_N(t)-\mathfrak b_N(t)|\\
			&+{\textstyle\int_t^s}(\|\calT(u)\|_{L^2_d}^D+\lambda_{N-1}^{2-D}\|\calT(u)\|_{L^2_d}^2)d\sigma.
		\end{aligned}
	\end{equation}
	Here, we used $\lambda_{N-1}^{1-D}\|\calT(u)\|_{L^2_d}\leq\eta\lambda_{N-1}^{-D} +\frac1{4\eta}\lambda_{N-1}^{2-D}\|\calT(u)\|_{L^2_d}^2$. Moreover, we have
	\begin{align}\label{eq:calT D esti}
		{\textstyle\int_t^s}\|\calT(u)\|_{L^2_d}^Dd\sigma \leq(s-t)^{(2-D)/2}
		({\textstyle\int_t^{s}}\|\calT(u)\|_{L^2_d}^2d\sigma)^{D/2}.
	\end{align}
	As $s\to T_+$, \eqref{eq:F b bound} and \eqref{eq:dissipation} keep the right-hand side of \eqref{eq:lambda minus D bdd} bounded. This proves \eqref{eq:lambda minus D bdd goal}.
	
	For $T_+=\infty$, fix $T_0\geq t_1^*$ sufficiently large. Define
	\begin{equation}\label{eq:tau N def}
		\tau_N(t)\coloneqq
		\begin{cases}
			\int_t^{T_+}\lambda_{N-1}^{-D}ds,&T_+<\infty,\\[1mm]
			\int_{T_0}^t\lambda_{N-1}^{-D}ds,&T_+=\infty.
		\end{cases}
	\end{equation}
	By \eqref{eq:lambda minus D bdd goal}, $\tau_N(t)<\infty$ and $\tau_N(t)\to0$ when $T_+<\infty$.
	We claim that
	\begin{equation}\label{eq:tauN growth}
		\frac{\tau_N(t)}{\min\{\sqrt{T_+-t},\sqrt t\}^{2-D}}\to\infty,
		\qquad
		\frac{\tau_N(t)}{\lambda_{N-1}(t)^{2-D}}\to\infty
		\quad\text{as}\quad t\to T_+.
	\end{equation}
	For each $\epsilon>0$, \eqref{eq:modulation smallness} implies
	\begin{equation}\label{eq:pf final lambda N}
		\lambda_{N-1}(s)\leq\lambda_1(s)\leq \epsilon \min\{\sqrt{T_+-s},\sqrt s\}
	\end{equation}
	for all $s$ sufficiently close to $T_+$. Consequently, for some $T_\epsilon>T_0$,
	\begin{equation*}
		\tau_N(t)\geq
		\begin{cases}
			\epsilon^{-D}\int_t^{T_+}(T_+-s)^{-D/2}ds,&T_+<\infty,\\[1mm]
			\epsilon^{-D}\int_{T_\epsilon}^t s^{-D/2}ds,&T_+=\infty.
		\end{cases}
	\end{equation*}
	Since $0<D<2$ and $\epsilon$ is arbitrary, the first limit in \eqref{eq:tauN growth} follows. The second follows from the same pointwise bound for $\lambda_{N-1}$.
	
	Now, again integrating \eqref{eq:mod esti}, we obtain
	\begin{align*}
		&|(\calF_N(s)-\mathfrak b_N(s))-(\calF_N(t)-\mathfrak b_N(t))-\kappa{\textstyle\int_t^s}\lambda_{N-1}^{-D}d\sigma|
		\\
		&\lesssim
		\eta {\textstyle\int_t^s}\lambda_{N-1}^{-D}d\sigma +\eta^{-1}{\textstyle\int_t^s}(\|\calT(u)\|_{L^2_d}^D+\lambda_{N-1}^{2-D}\|\calT(u)\|_{L^2_d}^2)d\sigma.
	\end{align*}
	When $T_+<\infty$, letting $s\to T_+$ in the preceding estimate and using \eqref{eq:F b bound}, \eqref{eq:calT D esti}, \eqref{eq:tauN growth}, and \eqref{eq:pf final lambda N}, we obtain
	\begin{align}\label{eq:mod esti finite T}
		|(\calF_N(t)-\mathfrak b_N(t)) +\kappa\tau_N(t)|
		\lesssim
		\eta \tau_N(t) +o_{t\to T_+}(1)\cdot \tau_N(t).
	\end{align}
	When $T_+=\infty$, \eqref{eq:tauN growth} implies $\tau_N(t)\to\infty$, and hence $\calF_N(T_0)-\mathfrak b_N(T_0)=o_{t}(1)\tau_N(t)$. Applying the preceding estimate on $[T_0,t]$ and using \eqref{eq:calT D esti}, \eqref{eq:tauN growth}, and \eqref{eq:pf final lambda N}, we obtain
	\begin{equation}\label{eq:mod esti inf T}
		|(\calF_N(t)-\mathfrak b_N(t))-\kappa\tau_N(t)|
		\lesssim\eta\tau_N(t)+o_{t\to\infty}(1)\tau_N(t).
	\end{equation}
	Since $\eta\ll 1$, \eqref{eq:mod esti finite T} and \eqref{eq:mod esti inf T} imply
	\begin{equation*}
		\liminf_{t\to T_+}\frac{|\calF_N(t)-\mathfrak b_N(t)|}{\tau_N(t)}>0.
	\end{equation*}
	On the other hand, \eqref{eq:F b bound} and \eqref{eq:tauN growth} imply
	\begin{equation*}
		\frac{|\calF_N(t)-\mathfrak b_N(t)|}{\tau_N(t)}
		\lesssim\frac{\lambda_{N-1}(t)^{2-D}}{\tau_N(t)}\to0.
	\end{equation*}
	This contradiction excludes $N\geq2$. Therefore, the decompositions in Propositions~\ref{prop:sol resol hmhf} and~\ref{prop:sol resol nlh} contain at most one bubble, which yields the conclusions of Theorems~\ref{thm:HMHF main} and~\ref{thm:NLH main}.
\end{proof}

\section{Proof of Lemma~\ref{lem:nonlinear esti}}\label{sec:technical lem}

In this section, we prove Lemma~\ref{lem:nonlinear esti}, which supplies the weighted control of $g$ required in the proof of Proposition~\ref{prop:modulation estimates}. The proof is based on the coercivity estimate \eqref{eq:H coercivity}.

We treat the cases $D\in\{1,\frac32\}$ and $D=\frac12$ separately. The former includes the $1$-equivariant \eqref{eq:HMHF} and \eqref{eq:NLH} in dimensions $d=4,5$, while the latter corresponds to \eqref{eq:NLH} in dimension $d=3$. When $D\in\{1,\frac32\}$, we apply \eqref{eq:H coercivity} directly to a localization of $g$ around the bubble at $\lambda_N$. When $D=\frac12$, the slow decay of $W_{\lambda_N}$ makes its interaction with the bubbles at $\lambda_1,\ldots,\lambda_{N-1}$ too large for the same argument. We therefore introduce a modified profile.

We first consider the cases $D\in\{1,\frac32\}$.

\begin{lem}[Local coercivity estimate: $D\in\{1,\frac32\}$]
	Let $D\in\{1,\frac32\}$, and fix a sufficiently small constant $\delta_0>0$. For every $t\in[t_1^*,T_+)$ sufficiently close to $T_+$, we have
	\begin{equation}\label{eq:H2 esti for D one three halves}
		\|\chi_{\delta_0\lambda_{N-1}}g\|_{\dot H^2_d}
		\lesssim\|\calT(u)\|_{L^2_d}+\lambda_{N-1}^{-1}.
	\end{equation}
\end{lem}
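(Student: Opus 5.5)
We apply the coercivity estimate \eqref{eq:H coercivity} with $\lambda=\lambda_N$ to $h\coloneqq\chi_{\delta_0\lambda_{N-1}}g$, and then bound $\|H_{\lambda_N}h\|_{L^2_d}$ through the equation $\calT(u)=\calT(U)-H_Ug+\NL_U(g)$.

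\textbf{Orthogonality.} Since $\operatorname{supp}Z_{\underline{\lambda_N}}\subset[R_0^{-1}\lambda_N,R_0\lambda_N]$ and $R_0\lambda_N\ll\delta_0\lambda_{N-1}$ for $t$ close to $T_+$, the cutoff equals $1$ there. Hence \eqref{eq:orthogonality} gives $(Z_{\underline{\lambda_N}},h)_d=0$, and \eqref{eq:H coercivity} yields $\|h\|_{\dot H^2_d}\lesssim\|H_{\lambda_N}h\|_{L^2_d}$.

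\textbf{Expansion of $H_{\lambda_N}h$.} We write
\begin{equation*}
	H_{\lambda_N}h=\chi H_Ug+[\chi,-\Delta_d]g+\chi r^{-2}\bigl(f'(P)-f'(Q_{[\lambda_N]})\bigr)g,
\end{equation*}
with $\chi=\chi_{\delta_0\lambda_{N-1}}$, and use $\chi H_Ug=\chi(\calT(U)-\calT(u)+\NL_U(g))$. This produces four groups of terms, each to be bounded by $\|\calT(u)\|_{L^2_d}+\lambda_{N-1}^{-1}$ plus a small multiple of $\|h\|_{\dot H^2_d}$.

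\textbf{Bounds for each group.}
\begin{enumerate}
	\item $\|\chi\calT(u)\|_{L^2_d}\le\|\calT(u)\|_{L^2_d}$.
	\item $\chi\calT(U)$ is the sum of bubble interactions on $r\lesssim\delta_0\lambda_{N-1}$. The dominant piece is $r^{-D-2}f'(Q_{[\lambda_N]})Q_{[\lambda_{j}]}$ with $j\le N-1$, which by \eqref{eq:profile pointwise} is at most $\lambda_{N-1}^{-D}r^{-2}\min\{r^2\lambda_N^{-2},\lambda_N^2r^{-2}\}$. Its $L^2_d$ norm is about $\lambda_{N-1}^{-D}\lambda_N^{D-1}\lesssim\lambda_{N-1}^{-1}$, since $D\ge1$. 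The other interaction terms, bounded as in Step 4 of Proposition~\ref{prop:modulation estimates}, are handled similarly. The restriction $D\ge1$ is used exactly here.
	\item The commutator $[\chi,\Delta_d]g=-2\chi'g_r-(\Delta_d\chi)g$ is supported on $r\sim\delta_0\lambda_{N-1}$. It is therefore at most $\lambda_{N-1}^{-1}\|r^{-1}\cdot\|$-type norms of $g$ on that annulus, hence $\lesssim_{\delta_0}\lambda_{N-1}^{-1}\|g\|_{\dot H^1}$-local, which is $\lesssim\lambda_{N-1}^{-1}$ by \eqref{eq:delta bound}, \eqref{eq:Hardy Sobolev}, and boundedness of $g$ near the origin.
	\item The potential difference satisfies $r^{-2}|f'(P)-f'(Q_{[\lambda_N]})|\lesssim r^{-2}\sum_{j<N}|Q_{[\lambda_j]}|(\dots)$ by the first inequality of Lemma~\ref{lem:nonlinear estimates}. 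On $r\le2\delta_0\lambda_{N-1}$ this is of size at most about $\lambda_{N-1}^{-D}r^{D-2}$. Then $\|r^{-2}(\dots)h\|_{L^2_d}\lesssim\delta_0^{D}\|r^{-2}h\|_{L^2_d}$ in the relevant region, and the Hardy bound $\|r^{-2}h\|\lesssim\|h\|_{\dot H^2_d}$ (via \eqref{eq:second order Hardy} together with Hardy) lets this term be absorbed into the left side, because $\delta_0$ is small.
\end{enumerate}

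\textbf{Main obstacle: the nonlinear term.} We expect $\chi\NL_U(g)$ to be the hardest step. We bound it by $r^{D-2}g^2$ (or, for NLH, by $|P|^{2/D-1}r^{-D-2+...}$-type terms plus $|g|^{1+2/D}$). For the quadratic part we use \eqref{eq:first order L4}:
\begin{equation*}
	\|r^{D-2}h^2\|_{L^2_d}\lesssim\|h\|_{\dot H^1_d}\|h\|_{\dot H^2_d}\lesssim\eta^2\|h\|_{\dot H^2_d},
\end{equation*}
which is absorbable; this is why $D\in\{1,\tfrac32\}$ is needed, since \eqref{eq:first order L4} holds only there. For NLH with $D=\tfrac32$, the top-order term $|g|^{1+4/3}$ is handled using \eqref{eq:radial Linfty} and $\|r^Dg\|_{L^\infty}\lesssim\eta^2$ to reduce it to the same quadratic bound. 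Since $g$ and $h$ differ on the cutoff region, the leftover terms there again contribute $\lambda_{N-1}^{-1}$-size errors. Combining the four groups, absorbing the small multiples of $\|h\|_{\dot H^2_d}$, gives \eqref{eq:H2 esti for D one three halves}.
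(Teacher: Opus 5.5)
\textbf{Gap in item (4) when $D=1$.} Your overall route is the paper's. You apply \eqref{eq:H coercivity} with $\lambda=\lambda_N$ to $h=\chi_{\delta_0\lambda_{N-1}}g$, use the same expansion of $H_{\lambda_N}h$, bound $\|\calT(U)\|_{L^2_d}\lesssim\lambda_{N-1}^{-1}$ using $D\ge1$, and use \eqref{eq:first order L4} to absorb the quadratic part of $\NL_U(g)$.

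The step that fails is item (4) for $D=1$, which covers \eqref{eq:HMHF} and \eqref{eq:NLH} with $d=4$. You absorb the potential-difference term by bounding it with $\delta_0^D\|r^{-2}h\|_{L^2_d}$ and then invoking $\|r^{-2}h\|_{L^2_d}\lesssim\|h\|_{\dot H^2_d}$. That Rellich-type inequality is false in $d=4$. After \eqref{eq:second order Hardy}, the Hardy step you would need is $\int_0^\infty h^2r^{-1}dr\lesssim\int_0^\infty h_r^2r\,dr$, which is the critical two-dimensional Hardy inequality and fails. In fact $\|r^{-2}h\|_{L^2_4}=\infty$ whenever $h(0)\neq0$, and nothing forces $g$ to vanish at the origin. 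For instance, in \eqref{eq:HMHF}, $g=r^{-1}(v-P)$ and $g(0)=\partial_r(v-P)(0)$. So the right-hand side you want to absorb is infinite. For $D=\frac32$ ($d=5$) Rellich does hold, and your absorption is fine there.

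\textbf{Repair.} Do not absorb this term; treat it as lower order, exactly like the commutator. Your own pointwise bound $r^{-2}|f'(P)-f'(Q_{[\lambda_N]})|\lesssim\lambda_{N-1}^{-D}r^{D-2}$ on $r<2\delta_0\lambda_{N-1}$ already suffices. Since $D\ge1$, there $\lambda_{N-1}^{-D}r^{D-2}=\lambda_{N-1}^{-1}r^{-1}(r/\lambda_{N-1})^{D-1}\lesssim\lambda_{N-1}^{-1}r^{-1}$. Hardy \eqref{eq:Hardy Sobolev} then bounds the term by $\lambda_{N-1}^{-1}\|\chi_{2r_0}g\|_{\dot H^1_d}\lesssim\lambda_{N-1}^{-1}$. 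This is precisely \eqref{eq:potential esti} in the paper, which obtains it via \eqref{eq:radial Linfty} and the sharper pointwise bound \eqref{eq:potential comparison pointwise}. With this change the rest of your argument goes through as in the paper.
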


\begin{proof}
	\textbf{Step 1.} Recall $P=\sum_{j=1}^{N}\iota_jQ_{[\lambda_j]}$. We first claim
	\begin{align}
		\|\chf_{\{r<2\delta_0\lambda_{N-1}\}}r^{-2}(f'(P)-f'(Q_{[\lambda_N]}))g\|_{L^2_d}
		&\lesssim\lambda_{N-1}^{-1}\|\chi_{2r_0}g\|_{\dot H^1_d},\label{eq:potential esti}\\
		\|\calT(U)\|_{L^2_d}=\|\Delta_dU+r^{-(D+2)}f(r^DU)\|_{L^2_d}
		&\lesssim \lambda_{N-1}^{-1}.\label{eq:calT U esti}
	\end{align}
	Recall that $f'$ is even, and hence $f'(\iota_NQ_{[\lambda_N]})=f'(Q_{[\lambda_N]})$. Moreover, we have
	\begin{equation}\label{eq:inner profile esti}
		|P-\iota_NQ_{[\lambda_N]}|\lesssim {\textstyle\sum_{j=1}^{N-1}}(\lambda_j^{-1}r)^D\lesssim (\lambda_{N-1}^{-1}r)^D,\quad 0<r<2\delta_0\lambda_{N-1}.
	\end{equation}
	Moreover,
	\begin{equation}\label{eq:profile nonlinear esti}
		\begin{cases}
			|\sin Q_{[\lambda_N]}(r)|,&\text{for \eqref{eq:HMHF}},\\
			|Q_{[\lambda_N]}(r)|^{2/D-1},&\text{for \eqref{eq:NLH}}
		\end{cases}
		\lesssim
		\begin{cases}
			(\lambda_N^{-1}r)^{2-D},&0<r\leq\lambda_N,\\
			(\lambda_Nr^{-1})^{2-D},&\lambda_N\leq r.
		\end{cases}
	\end{equation}
	Indeed, the \eqref{eq:HMHF} estimate follows from $\sin Q_{[\lambda_N]}=r(\Lambda W)_{\lambda_N}$ and \eqref{eq:Lambda W pointwise}, while the \eqref{eq:NLH} estimate follows from \eqref{eq:profile pointwise}.
	
	With $a=\iota_NQ_{[\lambda_N]}$ and $b=P-\iota_NQ_{[\lambda_N]}$, we apply \eqref{eq:HMHF nonlinear esti} and \eqref{eq:NLH nonlinear esti} in the \eqref{eq:HMHF} and \eqref{eq:NLH} cases, respectively. Combining the resulting bounds with \eqref{eq:inner profile esti} and \eqref{eq:profile nonlinear esti}, we obtain
	\begin{equation}\label{eq:potential comparison pointwise}
		\begin{aligned}
			|f'(P)-f'(Q_{[\lambda_N]})|
			&\lesssim(\lambda_{N-1}^{-1}r)^2
			+(\lambda_{N-1}^{-1}r)^D(\lambda_N^{-1}r)^{2-D}\chf_{\{r\leq\lambda_N\}}\\
			&\quad+(\lambda_{N-1}^{-1}r)^D(\lambda_Nr^{-1})^{2-D}
			\chf_{\{\lambda_N<r<2\delta_0\lambda_{N-1}\}}.
		\end{aligned}
	\end{equation}
	For $t$ sufficiently close to $T_+$, we have $2\delta_0\lambda_{N-1}<r_0$. Hence, \eqref{eq:radial Linfty} and \eqref{eq:potential comparison pointwise}, with $d=2D+2$ and $\delta_0, \mu_N\lesssim 1$, imply
	\begin{align*}
		\|\chf_{\{r<2\delta_0\lambda_{N-1}\}}r^{-2}(f'(P)-f'(Q_{[\lambda_N]}))g\|_{L^2_d}^2
		\lesssim \lambda_{N-1}^{-2}\|\chi_{2r_0}g\|_{\dot H^1_d}^2,
	\end{align*}
	which proves \eqref{eq:potential esti}.
	
	We next prove \eqref{eq:calT U esti}. Since each $\iota_jW_{\lambda_j}$ is stationary,
	\begin{equation}\label{eq:U interaction identity}
		\calT(U)=r^{-(D+2)}[f(P)-{\textstyle\sum_{j=1}^N}f(\iota_jQ_{[\lambda_j]})].
	\end{equation}
	For \eqref{eq:HMHF} and \eqref{eq:NLH}, respectively, we combine \eqref{eq:HMHF nonlinear esti} with $\sin Q_{[\lambda]}=r(\Lambda W)_\lambda$ and \eqref{eq:NLH nonlinear esti} with $Q_{[\lambda]}=r^DW_\lambda$ to obtain
	\begin{equation*}
		|\calT(U)|\lesssim \sum_{1\leq j<k\leq N}
		\begin{cases}
			|(\Lambda W)_{\lambda_j}||(\Lambda W)_{\lambda_k}|[|(\Lambda W)_{\lambda_j}|+|(\Lambda W)_{\lambda_k}|],&\text{for \eqref{eq:HMHF}},\\
			|W_{\lambda_j}|^{2/D}|W_{\lambda_k}|+|W_{\lambda_k}|^{2/D}|W_{\lambda_j}|,&\text{for \eqref{eq:NLH}}.
		\end{cases}
	\end{equation*}
	Applying \eqref{eq:Lambda W pointwise} for \eqref{eq:HMHF} and \eqref{eq:profile pointwise} for \eqref{eq:NLH}, and estimating separately on the regions $0<r<\lambda_k$, $\lambda_k<r<\lambda_j$, and $\lambda_j<r$, we obtain
	\begin{align*}
		\|\calT(\iota_jW_{\lambda_j}+\iota_kW_{\lambda_k})\|_{L^2_d}^2
		\lesssim(\lambda_k/\lambda_j)^{2D-2}\lambda_j^{-2}.
	\end{align*}
	The last estimates in \eqref{eq:HMHF nonlinear esti} and \eqref{eq:NLH nonlinear esti} therefore yield
	\begin{equation*}
		\|\calT(U)\|_{L^2_d}
		\lesssim \sum_{1\leq j<k\leq N}(\lambda_k/\lambda_j)^{D-1}\lambda_j^{-1}
		\lesssim \lambda_{N-1}^{-1}.
	\end{equation*}
	This proves \eqref{eq:calT U esti}.
	
	\textbf{Step 2.} We prove \eqref{eq:H2 esti for D one three halves}. Since $\chi_{\delta_0\lambda_{N-1}}=1$ on $\operatorname{supp}Z_{\underline{\lambda_N}}$, we have
	\begin{equation*}
		(Z_{\underline{\lambda_N}},\chi_{\delta_0\lambda_{N-1}}g)_d=0.
	\end{equation*}
	Moreover, we have
	\begin{equation*}
		H_{\lambda_N}(\chi_{\delta_0\lambda_{N-1}}g)
		=\chi_{\delta_0\lambda_{N-1}}[H_Ug+r^{-2}\{f'(P)-f'(Q_{[\lambda_N]})\}g]
		+[-\Delta_d,\chi_{\delta_0\lambda_{N-1}}]g.
	\end{equation*}
	From the identity $[-\Delta_d,\chi_{\delta_0\lambda_{N-1}}]g=-2(\chi_{\delta_0\lambda_{N-1}})_rg_r-(\Delta_d\chi_{\delta_0\lambda_{N-1}})g$ and \eqref{eq:potential esti},
	\begin{equation*}
		\|[-\Delta_d,\chi_{\delta_0\lambda_{N-1}}]g\|_{L^2_d}
		+\|\chi_{\delta_0\lambda_{N-1}}r^{-2}[f'(P)-f'(Q_{[\lambda_N]})]g\|_{L^2_d}
		\lesssim\lambda_{N-1}^{-1}\|\chi_{2r_0}g\|_{\dot H^1_d}.
	\end{equation*}
	Hence \eqref{eq:H coercivity} implies
	\begin{equation}\label{eq:coercivity high dim}
		\|\chi_{\delta_0\lambda_{N-1}}g\|_{\dot H^2_d}
		\lesssim\|H_Ug\|_{L^2_d(r<2\delta_0\lambda_{N-1})}
		+\lambda_{N-1}^{-1}\|\chi_{2r_0}g\|_{\dot H^1_d}.
	\end{equation}
	To estimate the $H_Ug$ term, we recall \eqref{eq:error equ},
	\begin{equation}\label{eq:error equ section 4}
		H_Ug=\calT(U)-\calT(u)+\NL_U(g).
	\end{equation}
	Combining \eqref{eq:radial Linfty}, \eqref{eq:HMHF nonlinear esti}, and \eqref{eq:NLH nonlinear esti}, we get
	\begin{equation}\label{eq:quadratic nonlinearity}
		|\NL_U(g)|\lesssim r^{D-2}g^2,\qquad 0<r<r_0.
	\end{equation}
	On $r<\delta_0\lambda_{N-1}$, \eqref{eq:first order L4} and \eqref{eq:coercivity high dim} yield
	\begin{equation*}
		\|\NL_U(g)\|_{L^2_d(r<\delta_0\lambda_{N-1})}
		\lesssim\|\chi_{2r_0}g\|_{\dot H^1_d}\|H_Ug\|_{L^2_d(r<2\delta_0\lambda_{N-1})}
		+\lambda_{N-1}^{-1}\|\chi_{2r_0}g\|_{\dot H^1_d}^2.
	\end{equation*}
	On $\delta_0\lambda_{N-1}<r<2\delta_0\lambda_{N-1}$, \eqref{eq:radial Linfty} and \eqref{eq:Hardy Sobolev} give
	\begin{equation*}
		\|\chf_{\{\delta_0\lambda_{N-1}<r<2\delta_0\lambda_{N-1}\}}\NL_U(g)\|_{L^2_d}
		\lesssim\lambda_{N-1}^{-1}\|\chi_{2r_0}g\|_{\dot H^1_d}^2.
	\end{equation*}
	Therefore
	\begin{equation}\label{eq:inner nonlinear esti}
		\|\NL_U(g)\|_{L^2_d(r<2\delta_0\lambda_{N-1})}
		\lesssim\|\chi_{2r_0}g\|_{\dot H^1_d}\|H_Ug\|_{L^2_d(r<2\delta_0\lambda_{N-1})}
		+\lambda_{N-1}^{-1}\|\chi_{2r_0}g\|_{\dot H^1_d}^2.
	\end{equation}
	Using \eqref{eq:error equ section 4}, \eqref{eq:calT U esti}, and \eqref{eq:inner nonlinear esti}, we obtain
	\begin{equation*}
		\|H_Ug\|_{L^2_d(r<2\delta_0\lambda_{N-1})}
		\lesssim\|\calT(u)\|_{L^2_d}+\lambda_{N-1}^{-1}
		+\|\chi_{2r_0}g\|_{\dot H^1_d}\|H_Ug\|_{L^2_d(r<2\delta_0\lambda_{N-1})}.
	\end{equation*}
	Using \eqref{eq:delta bound} to absorb the last term, we derive
	\begin{equation}\label{eq:HU local esti}
		\|H_Ug\|_{L^2_d(r<2\delta_0\lambda_{N-1})}
		\lesssim\|\calT(u)\|_{L^2_d}+\lambda_{N-1}^{-1}.
	\end{equation}
	Combining \eqref{eq:coercivity high dim} and \eqref{eq:HU local esti}, we conclude \eqref{eq:H2 esti for D one three halves}.
\end{proof}

We next prove a lemma for the case $D=1/2$.

\begin{lem}[Modified profile, $D=1/2$]\label{lem:modified profile}
	We consider \eqref{eq:NLH} with $D=\frac12$ ($d=3$). For each $t\in[t_1^*,T_+)$ sufficiently close to $T_+$, there exists $\nu_N(t)>0$ with the following properties. We have
	\begin{equation}\label{eq:nuN correction}
		|\log(\nu_N/\lambda_N)|\lesssim\mu_N^D.
	\end{equation}
	For a modified profile
	\begin{equation}\label{eq:modified profile}
		\td U(r)\coloneqq\iota_N\left(W_{\nu_N}(r)-W_{\lambda_N}(r)\right)
		-\bigg(\sum_{j=1}^{N-1}\iota_j\lambda_j^{-D}\bigg)\chi_{4\delta\lambda_{N-1}}(r),
	\end{equation}
	we have the modified decomposition
	\begin{equation}\label{eq:modified error}
		w\coloneqq u-(U+\td U)=g-\td U,\qquad (Z_{\underline{\nu_N}},w)_3=0.
	\end{equation}
	Moreover, we have
	\begin{equation}\label{eq:H1 esti for three dim}
		\|\chi_{2r_0}w\|_{\dot H^1_3}\lesssim\delta^{1/2},
	\end{equation}
	and
	\begin{equation}\label{eq:H2 esti for three dim}
		\|\chi_{\frac{r_0}{4}}w\|_{\dot H^2_3}
		\lesssim\|\calT(u)\|_{L^2_3}+\lambda_{N-1}^{-1}\delta^{-1/2}.
	\end{equation}
\end{lem}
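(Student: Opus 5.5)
The plan is to construct $\nu_N$ by the implicit function theorem and then to rerun the argument of the case $D\in\{1,\frac32\}$ with $g$ replaced by $w$ and $U$ replaced by $U+\td U$. In $d=3$ the sum $\sum_{j<N}\iota_jW_{\lambda_j}$ is essentially the constant $c_{N-1}\coloneqq\sum_{j\le N-1}\iota_j\lambda_j^{-D}$ on $r\lesssim\lambda_{N-1}$. The cutoff term in $\td U$ removes this constant near the $N$-th bubble, and the change of scale $\lambda_N\to\nu_N$ restores the orthogonality.

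For the choice of $\nu_N$, consider $G(\nu)\coloneqq(Z_{\underline{\nu}},g-\td U_\nu)_3$, where $\td U_\nu$ is \eqref{eq:modified profile} with $\nu_N$ replaced by $\nu$. At $\nu=\lambda_N$ we have $\td U_{\lambda_N}=-c_{N-1}\chi_{4\delta\lambda_{N-1}}$. Since $\chi\equiv1$ on $\operatorname{supp}Z_{\underline{\lambda_N}}$ and \eqref{eq:orthogonality} holds, this gives
\[
G(\lambda_N)=c_{N-1}\int Z_{\underline{\lambda_N}}r^2dr .
\]
The normalization $\int Z r\,dr=0$ in \eqref{eq:Z conditions} only kills the $r^{-1}$ moment, not the constant moment, so I expect $|G(\lambda_N)|\lesssim \lambda_N^{-2}\lambda_N^{3-D}\lambda_{N-1}^{-D}$. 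In normalized form $\lambda_N G$, this is $\mu_N^D$. For the derivative,
\[
\nu\partial_\nu G=(Z_{\underline\nu},\iota_N(\Lambda W)_\nu)_3+O(\|\chi_{2r_0}g\|_{\dot H^1})=\iota_N\nu^{-1}\big(1+O(\eta^2)\big).
\]
The implicit function theorem, or simply monotonicity in $\log\nu$, then gives a root $\nu_N$ with \eqref{eq:nuN correction}. The identity $w=g-\td U$ is the definition.

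For \eqref{eq:H1 esti for three dim}, I would bound
\[
\|\chi_{2r_0}w\|_{\dot H^1}\le\|\chi_{2r_0}g\|_{\dot H^1}+\|W_{\nu_N}-W_{\lambda_N}\|_{\dot H^1}+|c_{N-1}|\,\|\chi_{4\delta\lambda_{N-1}}\|_{\dot H^1_3}.
\]
The first term is at most $\delta^{1/2}$. The second is $\lesssim|\log(\nu_N/\lambda_N)|\lesssim\mu_N^{D}\le\delta$. The third is $\lesssim\lambda_{N-1}^{-1/2}(\delta\lambda_{N-1})^{1/2}$, using $\|\chi_R\|_{\dot H^1_3}\sim R^{1/2}$. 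I read the $\delta$ in the cutoff radius as comparable to $\delta_0$, chosen so that this is $\lesssim\delta^{1/2}$.

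For \eqref{eq:H2 esti for three dim}, I would mimic Step 2 of the previous lemma, applying \eqref{eq:H coercivity} with $\lambda=\nu_N$ to $\chi_{r_0/4}w$, or more carefully to an inner localization plus a separate exterior estimate.
\begin{itemize}
\item Write $u=\td P+w$ with $\td P=U+\td U$, so that $H_{\td P}w=\calT(\td P)-\calT(u)+\NL_{\td P}(w)$.
\item The crucial point is that in $r\lesssim\delta\lambda_{N-1}$ the profile $r^D\td P$ equals $\iota_NQ_{[\nu_N]}$ plus a correction of size $\sum_{j<N}(r/\lambda_j)^{2+D}$ (the $O(r^2)$ Taylor remainder of $W_{\lambda_j}$), instead of size $(r/\lambda_{N-1})^D$.
\item Consequently $\|\calT(\td P)\|_{L^2}$ on that region, and the potential difference $r^{-2}(f'(r^D\td P)-f'(Q_{[\nu_N]}))$, become acceptable. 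For $D=\frac12$ the untruncated version would produce $\lambda_{N-1}^{-1}\mu_N^{D-1}$, which is exactly the failure being repaired.
\item The cutoff region $r\sim\delta\lambda_{N-1}$ contributes $\Delta$ of the cutoff, $\sim c_{N-1}(\delta\lambda_{N-1})^{-2}$, whose $L^2_3$ norm is $\lambda_{N-1}^{-1}\delta^{-1/2}$. This is the stated loss.
\item The commutator and nonlinear terms are handled as before, using \eqref{eq:radial L inf} in place of \eqref{eq:first order L4} to get $\|\NL(w)\|\lesssim\|w\|_{\dot H^1}\|w\|_{\dot H^2}$, which is absorbed via \eqref{eq:H1 esti for three dim}.
\item On $\delta\lambda_{N-1}\lesssim r\lesssim r_0/4$, where there is no coercivity, I would use elliptic regularity: $\Delta w=-\calT(u)+\calT(\td P)+(\text{potential})\,w+\NL$, with potential $\lesssim r^{-2}(\lambda_N/r)^{3/2}$ small.
\end{itemize}

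The main obstacle will be the pointwise bookkeeping of $\calT(U+\td U)$ in $d=3$. One must check that after subtracting the constant every interaction term between $\lambda_N$ and the larger bubbles is $O(\lambda_{N-1}^{-1}\delta^{-1/2})$ in $L^2_3$, including the $f(r^D\td P)$ cross terms in the transition region $r\sim\delta\lambda_{N-1}$ and the mismatch $W_{\nu_N}-W_{\lambda_N}$.
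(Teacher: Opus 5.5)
Your plan follows the paper's proof. You choose $\nu_N=\lambda_Ne^{s_N}$ by the implicit function theorem, get \eqref{eq:H1 esti for three dim} from the triangle inequality, and apply \eqref{eq:H coercivity} with $\lambda=\nu_N$ to $\chi_{r_0/4}w$. Subtracting the constants $\lambda_j^{-D}$ makes $U+\td U-\iota_NW_{\nu_N}=O(r^2\lambda_{N-1}^{-5/2})$ on $r<4\delta\lambda_{N-1}$, and the Laplacian of the cutoff produces the loss $\lambda_{N-1}^{-1}\delta^{-1/2}$.

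The step that fails as written is your reading of the cutoff radius. The $\delta$ in \eqref{eq:modified profile} is the time-dependent $\delta(t)$ of \eqref{eq:delta}, the same one on the right-hand sides, not a fixed $\delta_0$. With a fixed $\delta_0$, the cutoff term contributes $\sim\delta_0^{1/2}$ to $\|\chi_{2r_0}w\|_{\dot H^1_3}$. No fixed $\delta_0$ satisfies $\delta_0\lesssim\delta(t)$ for all $t$ near $T_+$; in the global case $\delta(t)\to0$ by \eqref{eq:delta smallness}. So \eqref{eq:H1 esti for three dim} would be false under your reading.

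If you take the radius $4\delta(t)\lambda_{N-1}$ literally, your own computations give exactly $\delta^{1/2}$ and $\lambda_{N-1}^{-1}\delta^{-1/2}$. You must then check that this radius stays far outside the $N$-th bubble. This holds because $\delta\geq\mu_N^{D}=\mu_N^{1/2}$ gives $\delta\lambda_{N-1}/\lambda_N\geq\mu_N^{-1/2}\to\infty$. That fact gives $\chi_{4\delta\lambda_{N-1}}\equiv1$ on $\operatorname{supp}Z_{\underline{\lambda_N}}$. It is also what makes the transition-region cross terms you flag as the main obstacle harmless, e.g.\ the paper's $\mu_N^4\delta^{-5}\leq\mu_N^{3/2}$ in \eqref{eq:modified nonlinear esti calT 3}.

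Two smaller points. First, since $d=2D+2$, the pairing $(Z_{\underline\nu},(\Lambda W)_\nu)_3=1$ is dimensionless. Hence $\nu\partial_\nu G=\iota_N+O(\delta^{1/2}+|\log(\nu/\lambda_N)|+\mu_N^D)$. The last two errors come from pairing $(\Lambda_{-1}Z)_{\underline\nu}$ with $W_\nu-W_{\lambda_N}$ and with the cutoff, and you omitted them. Also $|G(\lambda_N)|\lesssim\mu_N^D$ holds directly. Your stray factors $\nu^{-1}$ and $\lambda_N$ should be dropped; \eqref{eq:nuN correction} is unaffected.

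Second, \eqref{eq:H coercivity} is a global estimate, so no region lacks coercivity. The paper applies it on all of $r<r_0/4$ and puts the entire potential difference into the source term. It does this via $\||U+\td U|^4-W_{\nu_N}^4\|_{L^3_3}\lesssim\lambda_{N-1}^{-1}$ and $\|\chi_{2r_0}w\|_{L^6_3}\lesssim\delta^{1/2}$. Your separate exterior elliptic step is therefore unnecessary. Its potential bound $r^{-2}(\lambda_N/r)^{3/2}$ also omits the outer bubbles $W_{\lambda_j}^4$, $j<N$, which have size $\sim r^{-2}$ at $r\sim\lambda_j$ and are not small. They are harmless only because the same H\"older bound makes them an $L^2_3$ source of size $\lambda_{N-1}^{-1}\delta^{1/2}$.
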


\begin{proof}
	\textbf{Step 1.} We first show \eqref{eq:nuN correction}--\eqref{eq:H1 esti for three dim}.
	After increasing $t_1^*$ if necessary, we may assume $R_0\lambda_1<\frac{r_0}{16}$ and $8\delta\lambda_{N-1}<\frac{r_0}{8}$.
	Fix a sufficiently small $c_0>0$. For $|s|\leq c_0$, define
	\begin{equation*}
		\mathbf F(s)\coloneqq (\iota_NZ_{\underline{\lambda_Ne^s}},
		g+\iota_N(W_{\lambda_N}-W_{\lambda_Ne^s})
		+({\textstyle\sum_{j=1}^{N-1}}\iota_j\lambda_j^{-D})\chi_{4\delta\lambda_{N-1}})_3.
	\end{equation*}
	Since $(Z_{\underline{\lambda_N}},g)_3=0$,
	\begin{equation}\label{eq:modified F zero}
		|\mathbf F(0)|=|(\iota_NZ_{\underline{\lambda_N}},
		({\textstyle\sum_{j=1}^{N-1}}\iota_j\lambda_j^{-D})\chi_{4\delta\lambda_{N-1}})_3|
		\lesssim\lambda_N^D{\textstyle\sum_{j=1}^{N-1}}\lambda_j^{-D}
		\lesssim \mu_N^D.
	\end{equation}
	Using
	\begin{equation*}
		\partial_sW_{\lambda_Ne^s}=-(\Lambda W)_{\lambda_Ne^s}, \qquad
		\partial_sZ_{\underline{\lambda_Ne^s}}=-(\Lambda_{-1}Z)_{\underline{\lambda_Ne^s}},
	\end{equation*}
	and $(Z_{\underline{\lambda_Ne^s}},(\Lambda W)_{\lambda_Ne^s})_3=(Z,\Lambda W)_3=1$, we obtain
	\begin{equation*}
		\begin{aligned}
			\partial_s\mathbf F(s)=1-(\iota_N(\Lambda_{-1}Z)_{\underline{\lambda_Ne^s}},
			g+\iota_N(W_{\lambda_N}-W_{\lambda_Ne^s})
			+({\textstyle\sum_{j=1}^{N-1}}\iota_j\lambda_j^{-D})\chi_{4\delta\lambda_{N-1}})_3.
		\end{aligned}
	\end{equation*}
	For the $g$ term, we have
	\begin{align*}
		|(\iota_N(\Lambda_{-1}Z)_{\underline{\lambda_Ne^s}},g)_3|\lesssim\|\chi_{2r_0}g\|_{\dot H^1_3}
		\lesssim\delta^{1/2}.
	\end{align*}
	For the second term, using
	\begin{equation}\label{eq:Lambda W integrate}
		W_{\lambda_N}-W_{\lambda_Ne^s}
		={\textstyle\int_0^{s}}(\Lambda W)_{\lambda_Ne^\sigma}d\sigma,
	\end{equation}
	we arrive at
	\begin{equation*}
		|((\Lambda_{-1}Z)_{\underline{\lambda_Ne^s}},W_{\lambda_N}-W_{\lambda_Ne^s})_3|
		\lesssim |s|.
	\end{equation*}
	For the last term, using the support of $Z$ with $4\delta \lambda_{N-1} \gg \lambda_N e^s$, we obtain
	\begin{equation*}
		(\iota_N(\Lambda_{-1}Z)_{\underline{\lambda_Ne^s}},
		({\textstyle\sum_{j=1}^{N-1}}\iota_j\lambda_j^{-D})\chi_{4\delta\lambda_{N-1}})_3
		\lesssim ({\textstyle\sum_{j=1}^{N-1}}\lambda_j^{-D})(\lambda_{N}e^s)^D\lesssim  \mu_N^D.
	\end{equation*}
	Thus, we get
	\begin{equation}\label{eq:modified F derivative}
		|\partial_s\mathbf F(s)-1|\lesssim\delta^{1/2}+|s|+\mu_N^D.
	\end{equation}
	Choose $c_0$ sufficiently small and increase $t_1^*$ so that the right-hand side of \eqref{eq:modified F derivative} is less than $\frac12$. The implicit function theorem and \eqref{eq:modified F zero} yield a unique $s_N\in(-c_0,c_0)$ satisfying
	\begin{equation*}
		\mathbf F(s_N)=0, \qquad |s_N|\lesssim\mu_N^D.
	\end{equation*}
	Define
	\begin{equation*}
		\nu_N\coloneqq\lambda_Ne^{s_N}.
	\end{equation*}
	Then, we obtain \eqref{eq:nuN correction}, and $\mathbf F(s_N)=0$ is exactly the same as \eqref{eq:modified error}. 
	
	Next, we show \eqref{eq:H1 esti for three dim}. The cutoff correction in $\mathbf F$ satisfies
	\begin{equation*}
		\|({\textstyle\sum_{j=1}^{N-1}}\iota_j\lambda_j^{-D})\chi_{4\delta\lambda_{N-1}}\|_{\dot H^1_3}
		\lesssim({\textstyle\sum_{j=1}^{N-1}}\lambda_j^{-D})(\delta\lambda_{N-1})^D
		\lesssim \delta^{1/2}.
	\end{equation*}
	Moreover, from \eqref{eq:Lambda W integrate} and \eqref{eq:nuN correction}, we derive
	\begin{equation*}
		\|W_{\nu_N}-W_{\lambda_N}\|_{\dot H^1_3} \lesssim \mu_N^D.
	\end{equation*}
	Therefore, from \eqref{eq:modified profile}, we have
	\begin{equation*}
		\|\td U\|_{\dot H^1_3}\lesssim\delta^{1/2}.
	\end{equation*}
	Since $\chi_{2r_0}w=\chi_{2r_0}(g-\td U)$, the definition of $\delta$ establishes \eqref{eq:H1 esti for three dim}.
	
	\textbf{Step 2.} Before proving \eqref{eq:H2 esti for three dim}, we claim the following estimates for the modified profile:
	\begin{equation}\label{eq:modified profile esti}
		\begin{gathered}
			\|U+\td U\|_{L^6_3}\lesssim  1,
			\qquad
			\|\calT(U+\td U)\|_{L^2_3}
			\lesssim \lambda_{N-1}^{-1}\delta^{-1/2},\\
			\||U+\td U|^4-W_{\nu_N}^4\|_{L^3_3}
			\lesssim \lambda_{N-1}^{-1}.
		\end{gathered}
	\end{equation}
	By \eqref{eq:nuN correction}, we have
	\begin{equation*}
		\nu_N\sim\lambda_N, \qquad
		\delta\lambda_{N-1}/\nu_N\sim \delta/\mu_N \geq\mu_N^{-1/2}\to\infty, \qquad
		\delta\lambda_{N-1}/\lambda_j\leq\delta.
	\end{equation*}
	From the explicit formula for the profile $W_\lambda$, we have
	\begin{equation}\label{eq:modified profile inner pointwise}
		|U+\td U-\iota_NW_{\nu_N}|
		\lesssim r^2\lambda_{N-1}^{-5/2},
		\quad \text{on}\quad 0<r<4\delta\lambda_{N-1},
	\end{equation}
	and
	\begin{equation}\label{eq:modified profile esti 2}
		\|U+\td U-\iota_NW_{\nu_N}\|_{L^\infty}
		\lesssim \lambda_{N-1}^{-1/2}, \qquad
		\|U+\td U-\iota_NW_{\nu_N}\|_{L^6_3}\lesssim 1.
	\end{equation}
	Here, we have
	\begin{equation*}
		\|U+\td U\|_{L^6_3}\lesssim \|U+\td U-\iota_NW_{\nu_N}\|_{L^6_3}+\|W_{\nu_N}\|_{L^6_3}\lesssim  1.
	\end{equation*}
	
	Now we estimate $\calT(U+\td U)$.
	The cutoff correction also satisfies
	\begin{equation}\label{eq:modified cutoff esti}
		\| \td U-\iota_N(W_{\nu_N}-W_{\lambda_N}) \|_{\dot H^2_3}
		=
		\| ({\textstyle\sum_{j=1}^{N-1}}\iota_j\lambda_j^{-D})\chi_{4\delta\lambda_{N-1}} \|_{\dot H^2_3}
		\lesssim  \lambda_{N-1}^{-1}\delta^{-1/2}.
	\end{equation}
	Subtracting the stationary equations, we obtain
	\begin{equation}\label{eq:modified nonlinear esti calT 1}
		\begin{aligned}
			\calT(U+\td U)
			=&\Delta_3(\td U-\iota_N(W_{\nu_N}-W_{\lambda_N}))
			-{\textstyle\sum_{j=1}^{N-1}}\iota_jW_{\lambda_j}^5\\
			&+|U+\td U|^4(U+\td U)-\iota_NW_{\nu_N}^5.
		\end{aligned}
	\end{equation}
	Using \eqref{eq:NLH nonlinear esti} with $a=\iota_NW_{\nu_N}$ and $b=U+\td U-\iota_NW_{\nu_N}$, we have
	\begin{equation}\label{eq:modified nonlinear esti calT 2}
		||U+\td U|^4(U+\td U)-\iota_NW_{\nu_N}^5|
		\lesssim W_{\nu_N}^4|U+\td U-\iota_NW_{\nu_N}|
		+|U+\td U-\iota_NW_{\nu_N}|^5.
	\end{equation}
	Using \eqref{eq:modified profile inner pointwise} and \eqref{eq:modified profile esti 2} on $r<4\delta\lambda_{N-1}$ and $4\delta\lambda_{N-1}<r$, respectively, we estimate separately over the regions $0<r<\nu_N$, $\nu_N<r<4\delta\lambda_{N-1}$, and $4\delta\lambda_{N-1}<r$ to obtain
	\begin{equation}\label{eq:modified nonlinear esti calT 3}
		\|W_{\nu_N}^4(U+\td U-\iota_NW_{\nu_N})\|_{L^2_3}^2
		\lesssim\lambda_{N-1}^{-2}(\mu_N^3+\mu_N^4\delta^{-5})
		\lesssim\lambda_{N-1}^{-2}.
	\end{equation}
	On the other hand, the estimates \eqref{eq:modified profile esti 2} imply
	\begin{equation}\label{eq:modified nonlinear esti calT 4}
		\|(U+\td U-\iota_NW_{\nu_N})^5\|_{L^2_3}
		\lesssim \lambda_{N-1}^{-1}.
	\end{equation}
	Since $\sum_{j=1}^{N-1}\|W_{\lambda_j}^5\|_{L^2_3}\lesssim \lambda_{N-1}^{-1}$,
	the estimates \eqref{eq:modified cutoff esti}--\eqref{eq:modified nonlinear esti calT 4} establish
	\begin{equation*}
		\|\calT(U+\td U)\|_{L^2_3}\lesssim \lambda_{N-1}^{-1}\delta^{-1/2}.
	\end{equation*}
	
	For the last inequality in \eqref{eq:modified profile esti}, we apply the first estimate in \eqref{eq:NLH nonlinear esti} with $a=\iota_NW_{\nu_N}$ and $b=U+\td U-\iota_NW_{\nu_N}$:
	\begin{equation}\label{eq:modified potential 1}
		||U+\td U|^4-W_{\nu_N}^4|\lesssim W_{\nu_N}^3|U+\td U-\iota_NW_{\nu_N}|+|U+\td U-\iota_NW_{\nu_N}|^4.
	\end{equation}
	Arguing as in the proof of \eqref{eq:modified nonlinear esti calT 3}, we obtain
	\begin{equation}\label{eq:modified potential 2}
		\|W_{\nu_N}^3(U+\td U-\iota_NW_{\nu_N})\|_{L^3_3}^3
		\lesssim \lambda_{N-1}^{-3}
		\mu_N^{9/2}(1+\log(\delta\lambda_{N-1}\nu_N^{-1})+\delta^{-6})
		\lesssim\lambda_{N-1}^{-3}.
	\end{equation}
	Moreover, from \eqref{eq:modified profile esti 2}, we have
	\begin{equation}\label{eq:modified potential 3}
		\|U+\td U-\iota_NW_{\nu_N}\|_{L^{12}_3} \lesssim \lambda_{N-1}^{-1/4}.
	\end{equation}
	Combining \eqref{eq:modified potential 1}--\eqref{eq:modified potential 3}, we arrive at
	\begin{equation*}
		\||U+\td U|^4-W_{\nu_N}^4\|_{L^3_3}
		\lesssim \lambda_{N-1}^{-1},
	\end{equation*}
	which completes the proof of \eqref{eq:modified profile esti}.
	
	\textbf{Step 3.} Expanding $\calT(U+\td U+w)=\calT(u)$, we obtain
	\begin{equation}\label{eq:cutoff w equ}
		\begin{aligned}
			H_{\nu_N}(\chi_{\frac{r_0}{4}}w)
			&=-\chi_{\frac{r_0}{4}}\calT(u)+\chi_{\frac{r_0}{4}}\calT(U+\td U)
			+5\chi_{\frac{r_0}{4}}(|U+\td U|^4-W_{\nu_N}^4)w\\
			&\quad+[-\Delta_3,\chi_{\frac{r_0}{4}}]w+\chi_{\frac{r_0}{4}}\NL_{U+\td U}(w),
		\end{aligned}
	\end{equation}
	where $H_{\nu_N}=-\Delta_3-5W_{\nu_N}^4$. From \eqref{eq:H1 esti for three dim}, \eqref{eq:Hardy Sobolev}, and \eqref{eq:modified profile esti},
	\begin{equation}\label{eq:d three commutator}
		\begin{aligned}
			&\|\chi_{\frac{r_0}{4}}\calT(U+\td U)\|_{L^2_3}
			+\|\chi_{\frac{r_0}{4}}(|U+\td U|^4-W_{\nu_N}^4)w\|_{L^2_3}
			+\|[-\Delta_3,\chi_{\frac{r_0}{4}}]w\|_{L^2_3}\\
			&\lesssim\lambda_{N-1}^{-1}\delta^{-1/2}
			+\lambda_{N-1}^{-1}\delta^{1/2}+C_{r_0}\delta^{1/2}
			\lesssim\lambda_{N-1}^{-1}\delta^{-1/2}+C_{r_0}\delta^{1/2}.
		\end{aligned}
	\end{equation}
	
	It remains to estimate the nonlinear remainder. 
	By \eqref{eq:NLH nonlinear esti},
	\begin{equation*}
		|\NL_{U+\td U}(w)|\lesssim|U+\td U|^3|w|^2+|w|^5.
	\end{equation*}
	On $r<r_0/8$, \eqref{eq:radial L inf} implies
	\begin{equation}\label{eq:L inf w radial}
		\|\chi_{\frac{r_0}{8}}w\|_{L^\infty}^2
		\lesssim\|\chi_{\frac{r_0}{4}}w\|_{\dot H^1_3}\|\chi_{\frac{r_0}{4}}w\|_{\dot H^2_3}
		\lesssim\delta^{1/2}\|\chi_{\frac{r_0}{4}}w\|_{\dot H^2_3}.
	\end{equation}
	Hence \eqref{eq:L inf w radial}, \eqref{eq:H1 esti for three dim}, and \eqref{eq:modified profile esti} imply
	\begin{equation}\label{eq:d three inner nonlinear}
		\|\chi_{\frac{r_0}{8}}\NL_{U+\td U}(w)\|_{L^2_3}
		\lesssim (\|U+\td U\|_{L^6_3}^3+\|\chi_{\frac{r_0}{4}}w\|_{L^6_3}^3) \|\chi_{\frac{r_0}{8}}w\|_{L^\infty}^2
		\lesssim \delta^{1/2}\|\chi_{\frac{r_0}{4}}w\|_{\dot H^2_3}.
	\end{equation}
	On $r_0/8<r<r_0/2$, we have
	\begin{gather*}
		|w(r)|=|\chi_{2r_0}w(r)|\lesssim r^{-1/2}\|\partial_r(\chi_{2r_0}w)\|_{L^2_3(r,\infty)}
		\lesssim C_{r_0}\|\chi_{2r_0}w\|_{\dot H^1_3}
		\lesssim C_{r_0}\delta^{1/2},
		\\
		|U(r)+\td U(r)|\lesssim  C_{r_0}.
	\end{gather*}
	By these estimates and \eqref{eq:H1 esti for three dim}, we get
	\begin{equation}\label{eq:d three cutoff nonlinear}
		\|\chf_{\{\frac{r_0}{8}<r<\frac{r_0}{2}\}}\chi_{\frac{r_0}{4}}\NL_{U+\td U}(w)\|_{L^2_3}
		\lesssim C_{r_0}\delta^{1/2}.
	\end{equation}
	Note that, when $r_0=\infty$, the last term of \eqref{eq:d three commutator} and \eqref{eq:d three cutoff nonlinear} vanish.
	
	Since $\operatorname{supp}Z_{\underline{\nu_N}}\subset\{r<r_0/4\}$, \eqref{eq:modified error} implies $(\iota_NZ_{\underline{\nu_N}},\chi_{\frac{r_0}{4}}w)_3=0$.
	Applying \eqref{eq:H coercivity} to \eqref{eq:cutoff w equ}, we obtain
	\begin{equation*}
		\|\chi_{\frac{r_0}{4}}w\|_{\dot H^2_3}
		\lesssim \|H_{\nu_N}(\chi_{\frac{r_0}{4}}w)\|_{L^2_3}
		\lesssim\|\calT(u)\|_{L^2_3}+\lambda_{N-1}^{-1}\delta^{-1/2}
		+\delta^{1/2}(C_{r_0}+\|\chi_{\frac{r_0}{4}}w\|_{\dot H^2_3}).
	\end{equation*}
	By \eqref{eq:delta bound}, we can absorb $\delta^{1/2}\|\chi_{\frac{r_0}{4}}w\|_{\dot H^2_3}$ into the left-hand side. If $r_0<\infty$, then $\lambda_{N-1}\to0$, and we get $C_{r_0}\delta^{1/2}\leq\lambda_{N-1}^{-1}\delta^{-1/2}$.
	When $r_0=\infty$, this term is zero. Hence, we conclude \eqref{eq:H2 esti for three dim},
	\begin{equation*}
		\|\chi_{\frac{r_0}{4}}w\|_{\dot H^2_3}
		\lesssim\|\calT(u)\|_{L^2_3}+\lambda_{N-1}^{-1}\delta^{-1/2}. \qedhere
	\end{equation*}
\end{proof}

Finally, we finish the proof of Lemma~\ref{lem:nonlinear esti}.
\begin{proof}[Proof of Lemma~\ref{lem:nonlinear esti}]
	For $D\in\{1,\frac32\}$, \eqref{eq:Hardy Sobolev} and \eqref{eq:delta} imply
	\begin{equation*}
		\|\chi_{\delta_0\lambda_{N-1}}g\|_{\dot H^1_d}
		\lesssim\|\chi_{2r_0}g\|_{\dot H^1_d}\leq\delta^{1/2}.
	\end{equation*}
	For $D=\frac12$, the corresponding estimate is \eqref{eq:H1 esti for three dim}. Applying \eqref{eq:weighted interpolation} to $\chi_{\delta_0\lambda_{N-1}}g$ when $D\in\{1,\frac32\}$ and to $\chi_{\frac{r_0}{4}}w$ when $D=\frac12$, and then using \eqref{eq:H2 esti for D one three halves} and \eqref{eq:H2 esti for three dim}, respectively, we obtain
	\begin{align}
		{\textstyle\int_0^{\delta_0\lambda_{N-1}}}g^2r^{D-1}dr
		&\lesssim\delta^{(2-D)/2}\|\calT(u)\|_{L^2_d}^D
		+\delta^{(2-D)/2}\lambda_{N-1}^{-D},
		&&D\in\{1,\tfrac32\},\label{eq:interior weighted esti high d}\\
		{\textstyle\int_0^{r_0/4}}w^2r^{D-1}dr
		&\lesssim\delta^{(2-D)/2}\|\calT(u)\|_{L^2_3}^D
		+\delta^{1/2}\lambda_{N-1}^{-D},
		&&D=\tfrac12. \label{eq:interior weighted esti 3 d}
	\end{align}
	For $D\in\{1,\frac32\}$, the remaining parts satisfy
	\begin{equation}\label{eq:exterior weighted esti}
		{\textstyle\int_{\delta_0\lambda_{N-1}}^{r_0}}g^2r^{D-1}dr
		\lesssim\lambda_{N-1}^{-D}\|r^{-1}g\|_{L^2_d(0,r_0)}^2
		\lesssim\delta^{1/2}\lambda_{N-1}^{-D},
		\qquad D\in\{1,\tfrac32\}.
	\end{equation}
	Thus, \eqref{eq:interior weighted esti high d} and \eqref{eq:exterior weighted esti} prove \eqref{eq:weighted esti} for $D\in\{1,\frac32\}$.
	
	When $D=\frac12$ and $r_0<\infty$, the remaining parts become
	\begin{equation*}
		{\textstyle\int_{r_0/4}^{r_0}}w^2r^{D-1}dr
		\lesssim C_{r_0}\|\chi_{2r_0}w\|_{\dot H^1_3}^2
		\lesssim C_{r_0}\delta
		\lesssim\delta^{1/2}\lambda_{N-1}^{-D}.
	\end{equation*}
	There is no remaining part when $D=\frac12$ and $r_0=\infty$. Hence
	\begin{equation}\label{eq:w weighted esti}
		{\textstyle\int_0^{r_0}}w^2r^{D-1}dr
		\lesssim\delta^{(2-D)/2}\|\calT(u)\|_{L^2_3}^D
		+\delta^{1/2}\lambda_{N-1}^{-D},
		\qquad D=\frac12.
	\end{equation}
	We now recover $g$ from \eqref{eq:modified error} and \eqref{eq:modified profile},
	\begin{equation*}
		g=w+\td U,\qquad
		\td U=\iota_N(W_{\nu_N}-W_{\lambda_N})
		-({\textstyle\sum_{j=1}^{N-1}}\iota_j\lambda_j^{-D})\chi_{4\delta\lambda_{N-1}}.
	\end{equation*}
	The scale invariance of $\Lambda W$ and \eqref{eq:nuN correction} imply
	\begin{align*}
		{\textstyle\int_0^\infty}|W_{\nu_N}-W_{\lambda_N}|^2r^{D-1}dr
		&\lesssim|\log(\nu_N/\lambda_N)|^2\lambda_N^{-D}
		\lesssim\mu_N^D\lambda_{N-1}^{-D},\\
		{\textstyle\int_0^\infty}|({\textstyle\sum_{j=1}^{N-1}}\iota_j\lambda_j^{-D})
		\chi_{4\delta\lambda_{N-1}}|^2r^{D-1}dr
		&\lesssim\delta^{1/2}\lambda_{N-1}^{-D}.
	\end{align*}
	Therefore
	\begin{equation}\label{eq:modified corrector weight esti}
		{\textstyle\int_0^\infty}(\td U)^2r^{D-1}dr
		\lesssim\delta^{1/2}\lambda_{N-1}^{-D}.
	\end{equation}
	Thus, the estimate \eqref{eq:weighted esti} follows from $g=w+\td U$, \eqref{eq:w weighted esti}, and \eqref{eq:modified corrector weight esti}.
\end{proof}

\bibliographystyle{abbrv}
\bibliography{reference}

\end{document}